\newtheorem{theorem}[subsection]{Theorem}
\newtheorem{lemma}[subsection]{Lemma}
\newtheorem{sublemma}[subsubsection]{Lemma}
\newtheorem{corollary}[subsection]{Corollary}
\newtheorem{definition}[subsection]{Definition}
\newtheorem{remark}[subsection]{Remark}
\newtheorem{example}[subsection]{Example}
\newcommand\testshape{family=\f@family; series=\f@series; shape=\f@shape.}
\def\myemphInternal#1{\if n\f@shape%
\begingroup\itshape #1\endgroup\/%
\else\begingroup\sf\itshape #1\endgroup%
\fi}
\def\myemph{\futurelet\testchar\MaybeOptArgmyemph}
\def\MaybeOptArgmyemph{\ifx[\testchar \let\next\OptArgmyemph
                 \else \let\next\NoOptArgmyemph \fi \next}
\def\OptArgmyemph[#1]#2{\index{#1}\myemphInternal{#2}}
\def\NoOptArgmyemph#1{\myemphInternal{#1}}
\newcommand\id{\mathrm{id}}          % identity map    
\newcommand\Int{\mathrm{Int}}        % interior 
\newcommand\eps{\varepsilon}
\newcommand\Diff{\mathcal{D}}       % diffeomorphisms
\newcommand\Map{\mathrm{Map}}       % maps
\newcommand\bR{\mathbb{R}}
\newcommand\Cr[1]{\mathcal{C}^{#1}}
\newcommand\Cinfty{\mathcal{C}^{\infty}}
\newcommand\Crm[3]{\Cr{#1}\!\left(#2,#3\right)}
\newcommand\Crmx[3]{\Cr{#1}\bigl(#2,#3\bigr)}
\newcommand\Cont[2]{\Crm{0}{#1}{#2}}                         % space of continuous maps
\newcommand\Ci[2]{\mathcal{C}^{\infty}(#1,#2)}               % space of C^\infty maps
\newcommand\Aman{A}
\newcommand\Bman{B}
\newcommand\Cman{C}
\newcommand\Dman{D}
\newcommand\Eman{E}
\newcommand\Iman{I}
\newcommand\Jman{J}
\newcommand\Kman{K}
\newcommand\Lman{L}
\newcommand\Mman{M}
\newcommand\Nman{N}
\newcommand\Qman{Q}
\newcommand\Uman{U}
\newcommand\Vman{V}
\newcommand\Wman{W}
\newcommand\Xman{X}
\newcommand\Yman{Y}
\newcommand\Zman{Z}
\newcommand\sAA{\mathcal{A}}
\newcommand\sBB{\mathcal{B}}
\newcommand\NN{\mathcal{N}}
\newcommand\UU{\mathcal{U}}
\newcommand\WW{\mathcal{W}}
\newcommand\AFunc{A}
\newcommand\BFunc{B}
\newcommand\Func{F}
\newcommand\GFunc{G}
\newcommand\func{f}
\newcommand\gfunc{g}
\newcommand\dif{h}
\newcommand\Disk[1]{\Dman^{#1}}
\newcommand\Wtop{\mathsf{W}}
\newcommand\Stop{\mathsf{S}}
\newcommand\Wr[1]{\Wtop^{#1}}
\newcommand\Sr[1]{\Stop^{#1}}
\newcommand\Jr[3]{\mathcal{J}^{#1}(#2,#3)}
\newcommand\jetM[2]{j^{#1}_{#2}}
\newcommand\jr[3]{j_{#3}^{#1}[#2]}
\newcommand\jrm[3]{\jetM{#1}{#2}(#3)}
\newcommand\jpr[1]{q_{#1}}
\newcommand\pr[1]{p_{#1}}
\newcommand\src[1]{\mathbf{src}}
\newcommand\dest[1]{\mathbf{dest}}
\newcommand\ejr[1]{\mathbf{j}^{#1}}
\newcommand\WWW[4]{\Crm{#1}{#3}{#4}^{#2}}
\newcommand\WNBH[3]{[#2,#3]}
\newcommand\indmap[1]{#1_{*}}
\newcommand\indmapx[1]{#1^{*}}
\newcommand\incl{i}
\newcommand\rx{r} %{{\color{Green}r}}
\newcommand\sx{s} %{{\color{Red}s}}
\newcommand\ptt{t} %{{\color{Orange}t}}  % usef for homotopies
\newcommand\pts{z} %{{\color{Red}z}}
\newcommand\ptq{q} %{{\color{Red}q}}
\newcommand\ptl{v} %{{\color{Green}l}}
\newcommand\ptm{x} %{{\color{Mulberry}x}}
\newcommand\ptn{y} %{{\color{Red}y}}
\newcommand\vfunc{v}
\newcommand\bpt{q} %{{\color{Red}\mathbf{q}}}
\newcommand\pfunc{\varphi}
\newcommand\BfX{\sBB_{\pfunc,\Xman}}
\newcommand\AfX{\sAA_{\pfunc,\Xman}}
\newcommand\EXP{\mathbf{E}}
\newcommand\UNbh{\mathcal{U}}
\newcommand\VNbh{\mathcal{V}}
\newcommand\GOOD[5]{$(#2, #3, #5)$-preserving}
\newcommand\fgood{\GOOD{\func}{\sBB}{\Xman}{\rx}{\sx}}
\newcommand\betagood{\GOOD{\BFunc}{\UNbh}{\Disk{k+1}\times\Xman}{\rx}{\sx}}
\newcommand\mult{\mathsf{m}}
\newcommand\Mor[3]{\mathcal{C}_{\times}^{#1}(#2,#3)}
\newcommand\LMiso{\Sigma}
\newcommand\Niso{\Delta}
\newcommand\Hl{\lambda}
\newcommand\Hm{\mu}
\newcommand\DiffIMX[1]{\widehat{\Diff}_{\Jman \times \Mman}^{#1}(\Iman\times \Mman)}
\newcommand\DiffIMdIM{\DiffIMX{\rx}}
\newcommand\DiffsIMdIM{\DiffIMX{\sx}}
\newcommand\LoopDM{\Omega(\Diff^{\rx}(\Mman))}
\newcommand\LoopDMJ{\Omega_{\Jman}(\Diff^{\rx}(\Mman))}
\newcommand\cnorm[3]{\|#1\|_{#2,#3}}
\newcommand\relcnorm[4]{\|#1\|_{#2,#3}^{#4}}
\newcommand\sFunc{B} %{\color{Red}B}}
\newcommand\BXSiso{H} %{\color{PineGreen}H}}
\newcommand\CrmPairs[5]{\mathcal{C}^{#1}_{#3,\,#5}(#2,#4)}
\newcommand\CrMdMMdM[1]{\CrmPairs{#1}{\Mman}{\partial\Mman}{\Mman}{\partial\Mman}}
\newcommand\CrMdMNdN[1]{\CrmPairs{#1}{\Mman}{\partial\Mman}{\Nman}{\partial\Nman}}
\newcommand\CrmLMdMNdN[1]{\CrmPairs{#1}{\Lman\times\Mman}{\Lman\times\partial\Mman}{\Nman}{\partial\Nman}}
\title{Smooth approximations and their applications to homotopy types}
\author{Oleksandra Khokhliuk}
\address{Department of Geometry, Topology and Dynamical Systems \\
Taras Shevchenko National University of Kyiv \\
Hlushkova Avenue, 4e, Kyiv, 03127 Ukraine}
\email{khokhliyk@gmail.com}
\author{Sergiy Maksymenko}
\address{Topology Laboratory, Institute of Mathematics of NAS of Ukraine \\ Tereshchenkivska str., 3, Kyiv, 01024 Ukraine}
\email{maks@imath.kiev.ua}
\begin{document}
\begin{abstract}
Let $M, N$ the be smooth manifolds, $\mathcal{C}^{r}(M,N)$ the space of ${C}^{r}$ maps endowed with weak $C^{r}$ Whitney topology, and $\mathcal{B} \subset \mathcal{C}^{r}(M,N)$ an open subset.
It is proved that for $0\leq r<s\leq\infty$ the inclusion $\mathcal{B} \cap \mathcal{C}^{s}(M,N) \subset \mathcal{B}$ is a weak homotopy equivalence.
It is also established a parametrized variant of such a result.
In particular, it is shown that for a compact manifold $M$, the inclusion of the space of $\mathcal{C}^{s}$ isotopies $[0,1]\times M \to M$ fixed near $\{0,1\}\times M$ into the space of loops $\Omega(\mathcal{D}^{r}(M), \mathrm{id}_{M})$ of the group of $\mathcal{C}^{r}$ diffeomorphisms of $M$ at $\mathrm{id}_{M}$ is a weak homotopy equivalence.
\end{abstract}

\maketitle

% !TeX encoding = UTF-8
% !TeX spellcheck = en_US

\section{Introduction}
Let $\Mman,\Nman$ be two smooth manifolds.
Then for each $r\geq0$ the space $\Crm{\rx}{\Mman}{\Nman}$ of $\Cr{\rx}$ maps $\Mman\to\Nman$ can be endowed with the \myemph{$\Cr{\rx}$ weak Whitney topology} which we will denote by $\Wr{\rx}$, e.g.~\cite[Chapter~2]{Hirsch:DiffTop}, \cite[Chapter~8]{Mukherjee:DT:2015}.
In fact, 
\begin{itemize}[leftmargin=5ex]
\item 
$\Wr{0}$ is the same as the compact open topology, 
\item $\Wr{\rx}$ for $\rx<\infty$ arises as the induced compact open topology from the natural inclusion $\Crm{\rx}{\Mman}{\Nman} \subset \Cont{\Mman}{\Jr{\rx}{\Mman}{\Nman}}$ (called \myemph{$\rx$-jet prolongation}), where $\Jr{\rx}{\Mman}{\Nman}$ is the space of $\rx$-jets of maps $\Mman\to\Nman$, and 
\item $\Wr{\infty}$ is generated by all topologies $\Wr{\rx}$ for $0\leq\rx<\infty$.
\end{itemize}

In particular, we get a sequence of continuous inclusions
\begin{equation}\label{equ:CrMN_inclusions}
\Ci{\Mman}{\Nman}
\ \subset \
\cdots
\ \subset \
\Crm{\rx}{\Mman}{\Nman}
\ \subset \
\Crm{r-1}{\Mman}{\Nman}
\ \subset \
\cdots
\ \subset \
\Cont{\Mman}{\Nman}
\end{equation}
that are not topological embeddings, since each topology $\Wr{\rx}$ is stronger than the induced $\Wr{r-1}$ one.

It seems to be known that each of the above inclusions is a \myemph{weak homotopy equivalence}, but the authors did not succeed to find its explicit formulation in the available literature, though it was proved in~\cite[Theorem~A.3.7 \& Remark~A.3.8]{Neeb:AIF:2002} for the case when $\Mman$ is compact and $\Nman$ is a Lie group (even a Fr\'{e}chet-Lie group), and there are discussions, claims that it is true in general, and ideas of proof in some internet mathematical resources, see e.g.~\cite[answer by Ryan Budney]{FigueroaFarrill:MOF:2010} and \cite{Clough:MathStackExch:2016}.
However, after posting to arXiv the initial version of the present paper Prof.\! Helge Gl\"{o}ckner kindly informed us that such a statement follows from his (currently yet unpublished) paper~\cite[Proposition~6.1]{Glockner:0812.4713:2008}, see Remark~\ref{rem:comparison_with_Glockner} for details.
That weak homotopy equivalence is a consequence of $\Cr{\infty}$ approximation theorems and extends~\cite[Theorems~12, 16, 17]{Palais:Top:1966} and~\cite{Shvarts:DANSSSR:1964} about homotopy types of open subsets of locally convex topological vector spaces.

\begin{remark}\rm
If $\Mman$ is compact, then by~\cite[Corollary~2]{Milnor:TrAMS:1959} (being simply a combination of results by J.H.C.~Whitehead, O.~Hanner, and K.~Kuratowski), the spaces in~\eqref{equ:CrMN_inclusions} have homotopy types of countable CW-complexes, and therefore all the above inclusions are in fact \myemph{homotopy equivalences}.
\end{remark}

The aim of this paper is to give elementary and thorough proofs that several kinds of natural inclusions between spaces of differentiable maps, including inclusions~\eqref{equ:CrMN_inclusions}, are (weak) homotopy equivalences as well, see Theorems~\ref{th:B_CsMN__B}, \ref{th:B_CsMN__B_rel_param}, and~\ref{th:B_CsMN__B_rel_param_pairs}.

The obtained results are thus partially contained in~\cite{Glockner:0812.4713:2008}. 
We work in not such a generality as in~\cite{Glockner:0812.4713:2008}, but we additionally cover 
\begin{itemize}[leftmargin=*]
\item 
the inclusions $\sBB \cap\Crm{\sx}{\Mman}{\Nman} \subset \sBB$, where $\sBB$ is an open subset of $\Crm{\rx}{\Mman}{\Nman}$;
\item 
the ``\myemph{relative}'' case of spaces of maps $\Mman\to\Nman$ that coincide with some map $\pfunc:\Mman\to\Nman$ on a closed subset $\Xman\subset\Mman$, e.g. groups of diffeomorphisms fixed on some tubular neighbourhood of a submanifold, and in particular on a collar of the boundary of a manifold, Corollary~\ref{cor:tubular_nbh_whe_anyM};
\item
the ``\myemph{parametrized}'' case of spaces of continuous maps of a manifold $\Lman$ into the space of $\Cr{\rx}$ maps $\Mman\to\Nman$, and in particular, show that certain spaces of smooth isotopies are weakly homotopy equivalent to the corresponding loop spaces (consisting of continuous maps), Theorem~\ref{th:loop_space} and Corollary~\ref{cor:loop_space:Diff_surf}.
\end{itemize}
The latter case is quite in the spirit of the fundamental paper~\cite[Chapter~4]{Cerf:BSMF:1961} by J.~P.~Cerf treating the topology of the spaces of ``\myemph{smooth families of smooth maps}'', see also Remark~\ref{rem:Cerf_and_others}.

The obtained results seem to be useful and applicable for concrete computations of the homotopy types of certain spaces of smooth maps, in particular, groups of diffeomorphisms preserving a smooth function or leaves of a foliations.
They could probably be extended to a more general context of manifolds modeled on infinite-dimensional topological vector spaces.

A good survey on the homotopy types of spaces of smooth maps can be found in~\cite{Smith:CM:2010}, see also~\cite{HendersonWest:BAMS:1970, Sakai:JMSJ:1987, Smith:PhD:1993,  AlzaareerSchmeding:EM:2015, Golasinski:MM:2019}, and~\cite{CohenStacey:CM:2004, Stacey:PJM:2005, Stacey:PLMS:2009, Magnot:arxiv:2015, Stacey:GMJ:2017} for the topology of loop spaces.

\subsection*{Main results}
Let $\Mman,\Nman$ be smooth manifolds possibly with corners (see e.g.~\cite[Chapter~1]{MargalefOuterelo:DT:1992}), $0\leq \rx < \sx \leq \infty$, $\sBB\subset\Crm{\rx}{\Mman}{\Nman}$ an open subset with the topology $\Wr{\rx}$, and $\sAA:=\sBB \cap \Crm{\sx}{\Mman}{\Nman}$.
Endow $\sAA$ with the topology $\Wr{\sx}$.

Moreover, for a subset $\Xman\subset\Mman$ and a continuous map $\pfunc:\Xman\to\Nman$ define the following subspaces of $\sBB$ and $\sAA$ with the induced topologies:
\begin{align*}
\BfX &:= \{\gfunc\in\sBB \mid \gfunc = \pfunc \ \text{on} \ \Xman \}, \\
\AfX &:= \BfX \cap \Crm{\sx}{\Mman}{\Nman} \equiv \sAA \cap \BfX.
\end{align*}
In particular, we get two inclusions
\begin{align}
    & \incl: \sAA \subset \sBB, &
    & \incl: \AfX \subset \BfX.
\end{align}

\begin{definition}\label{def:stab_f_near_X_no_L}
Let $\Xman\subset\Mman$ be a closed subset with non-empty interior and $\pfunc\in\Crm{\sx}{\Mman}{\Nman}$, $0\leq\sx\leq\infty$.
We will say that a pair of $\Cr{\sx}$ isotopies
\begin{align}\label{equ:stabilizing_isotopies}
& \LMiso:[0,1]\times\Mman \to \Mman,  &
  \Niso:[0,1]\times\Nman \to \Nman,
\end{align}
\myemph{stabilizes $\pfunc$ near $\Xman$}, whenever 
\begin{enumerate}[label={\rm(\alph*)}]
\item\label{enum:def:stab_f_near_X_no_L:id}
$\LMiso_0 = \id_{\Mman}$, $\Niso_0 = \id_{\Nman}$, 
\item\label{enum:def:stab_f_near_X_no_L:shrinks}
$\LMiso_{\ptt}(\Xman) \subset \Int{\Xman}$ for $0 < \ptt \leq 1$, or equivalently $\Xman \subset \Int{\bigl(\LMiso_{\ptt}^{-1}(\Xman)\bigr)}$;
\item\label{enum:def:stab_f_near_X_no_L:stab}
$\Niso_{\ptt} \circ \pfunc \circ \LMiso_{\ptt} = \pfunc$ on some neighbourhood of $\Xman$ (which may depend on $\ptt$) for $0 < \ptt \leq 1$.
\end{enumerate}
\end{definition}

\begin{remark}\rm
Let~\eqref{equ:stabilizing_isotopies} be a pair of isotopies stabilizing $\pfunc$ near $\Xman$, and $\Niso_{\ptt} \circ \pfunc \circ \LMiso_{\ptt} = \pfunc$ on some neighbourhood $\Uman_{\ptt}$ of $\Xman$ for $\ptt\in(0,1]$.
Let also $\gfunc\in\Crm{\rx}{\Mman}{\Nman}$ be a map such that $\gfunc = \pfunc$ on $\Xman$, and $\gfunc_{\ptt} := \Niso_{\ptt} \circ \gfunc \circ \LMiso_{\ptt}$.
Then it easily follows from conditions~\ref{enum:def:stab_f_near_X_no_L:id}-\ref{enum:def:stab_f_near_X_no_L:stab} that for $0<\ptt\leq1$, $\gfunc_{\ptt} \equiv \pfunc$ on the neighbourhood $\Uman_{\ptt} \cap\Int\bigl(\LMiso_{\ptt}^{-1}(\Xman)\bigr)$ of $\Xman$, see~Lemma~\ref{lm:stab_near_X_prop}.
In particular, $\gfunc_{\ptt}$ is $\Cr{\sx}$ near $\Xman$.
Thus, existence of such isotopies allows to deform (by arbitrary small perturbation, i.e. for arbitrary small $\ptt>0$) any $\Cr{\rx}$ map $\gfunc$ coinciding with $\pfunc$ on $\Xman$ to a map $\gfunc_{\ptt}$ which is $\Cr{\sx}$ near $\Xman$.
This property is a principal technical feature allowing to prove relative variants of our results, see Theorem~\ref{th:B_CsMN__B}\ref{enum:th:B_CsMN__B:rel}.
\end{remark}

\begin{example}\label{exmp:tub_nbh}\rm
Let $\Bman \subset \Mman$ be a submanifold, $\Xman$ a closed tubular neighbourhood of $\Bman$, so there exists a smooth retraction $\rho:\Xman\to\Bman$ having a structure of a locally trivial fibration with a fibre being a closed disk of dimension $\dim\Mman - \dim\Bman$.
Fix any $\Cr{\sx}$ isotopy $\LMiso:[0,1]\times\Mman\to\Mman$ such that $\LMiso_0=\id_{\Mman}$ and $\LMiso_{\ptt}(\Xman) \subset \Int{\Xman}$ for all $t\in(0,1]$.
Let also $\Niso:[0,1]\times\Mman\to\Mman$ be the isotopy consisting of inverses of $\LMiso$, that is $\Niso_{\ptt} = \LMiso_{\ptt}^{-1}$.
The latter implies that $\Niso_{\ptt}\circ\id_{\Mman}\circ\LMiso_{\ptt} =\id_{\Mman}$, i.e. this pair of isotopies stabilizes the identity map $\pfunc=\id_{\Mman}$ near $\Xman$.
\end{example}

\begin{theorem}\label{th:B_CsMN__B}
\begin{enumerate}[leftmargin=*, label={\rm(\arabic*)}]
\item\label{enum:th:B_CsMN__B:abs}
The inclusion $\incl: \sAA \subset \sBB$ is always a weak homotopy equivalence.
If $\Mman$ is compact, then by \cite[Corollary~2]{Milnor:TrAMS:1959}, $\sAA$ and $\sBB$ have homotopy types of CW-complexes, and therefore $\incl$ is in fact a homotopy equivalence.

\item\label{enum:th:B_CsMN__B:rel}
If there exists a pair of isotopies~\eqref{equ:stabilizing_isotopies} stabilizing $\pfunc$ near $\Xman$, then the inclusion $\incl:\AfX \subset \BfX$ is a weak homotopy equivalence as well.
\end{enumerate}
\end{theorem}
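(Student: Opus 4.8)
The plan is to verify the standard compression criterion: the inclusion $\incl$ is a weak homotopy equivalence if and only if for every $n\geq0$ each continuous map $\beta\colon\Disk{n}\to\sBB$ with $\beta(\partial\Disk{n})\subset\sAA$ admits a homotopy $\beta^{\sigma}\colon\Disk{n}\to\sBB$, $\sigma\in[0,1]$, through maps of pairs $(\Disk{n},\partial\Disk{n})\to(\sBB,\sAA)$, with $\beta^{0}=\beta$ and $\beta^{1}(\Disk{n})\subset\sAA$ (the case $n=0$ being path-connectivity of each component of $\sBB$ into $\sAA$). Via the exponential law for the weak Whitney topology (legitimate since $\Mman$ is locally compact and $\Disk{n}$ compact), such a $\beta$ is the same datum as a family $\Func\colon\Disk{n}\times\Mman\to\Nman$ which is continuous, of class $\Cr{\rx}$ in the $\Mman$-variable, whose $\rx$-jet prolongation $(\ptt,\ptm)\mapsto\jrm{\rx}{\ptm}{\Func_{\ptt}}$ is continuous, and for which $\Func_{\ptt}$ is $\Cr{\sx}$ whenever $\ptt\in\partial\Disk{n}$. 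Thus the problem becomes a \emph{relative, parametrized $\Cr{\sx}$ approximation} statement: deform $\Func$, rel the parameters where it is already $\Cr{\sx}$, to a family that is $\Cr{\sx}$ in $\Mman$ for every parameter, while staying inside the open set $\sBB$.

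For part \ref{enum:th:B_CsMN__B:abs} I would use a parametrized smoothing operator $\Func_{\ptt}\mapsto S_{\eps}\Func_{\ptt}$, built by embedding $\Nman$ into a Euclidean space (equivalently fixing a tubular retraction of a neighbourhood of $\Nman$), mollifying in the charts of a locally finite atlas of $\Mman$ and reassembling with a partition of unity. Its required features are that $S_{\eps}\Func_{\ptt}$ is $\Cr{\sx}$ for $\eps>0$, that $S_{\eps}\Func_{\ptt}\to\Func_{\ptt}$ in the weak $\Wr{\rx}$ topology as $\eps\to0$ uniformly in $\ptt$, and that $(\ptt,\ptm)\mapsto\jrm{\rx}{\ptm}{S_{\eps}\Func_{\ptt}}$ is jointly continuous. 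I would then fix a continuous $\eps\colon\Disk{n}\to[0,\infty)$ with $\eps^{-1}(0)=\partial\Disk{n}$, rescaled by a small factor, and set $H_{\ptt,\sigma}:=S_{\sigma\eps(\ptt)}\Func_{\ptt}$. Since $\eps$ vanishes on $\partial\Disk{n}$, this homotopy is constant (equal to $\Func_{\ptt}$) over the boundary, hence in particular through maps of pairs; compactness of $\Disk{n}$ with openness of $\sBB$ lets us choose the rescaling so small that $H_{\ptt,\sigma}\in\sBB$ throughout; and $H_{\ptt,1}$ is $\Cr{\sx}$ in $\Mman$ for every $\ptt$ (smooth for interior $\ptt$, already $\Cr{\sx}$ for boundary $\ptt$). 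So $H_{\cdot,1}$ lands in $\sAA$, giving the required compression.

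For part \ref{enum:th:B_CsMN__B:rel} I would precede the smoothing with the stabilizing deformation of the Remark. Given $\beta\colon(\Disk{n},\partial\Disk{n})\to(\BfX,\AfX)$, put $\beta^{(\tau)}_{\ptt}:=\Niso_{\tau}\circ\beta_{\ptt}\circ\LMiso_{\tau}$ for $\tau\in[0,\tau_{0}]$. By conditions \ref{enum:def:stab_f_near_X_no_L:id}--\ref{enum:def:stab_f_near_X_no_L:stab}, for $\tau>0$ each $\beta^{(\tau)}_{\ptt}$ equals $\pfunc$ on a neighbourhood of $\Xman$, hence is $\Cr{\sx}$ near $\Xman$ and still lies in $\BfX$, while over $\partial\Disk{n}$ it stays in $\AfX$; so this is a homotopy of pairs, and by continuity of composition together with compactness of $\Disk{n}$ and openness of $\sBB$, for $\tau_{0}$ small it remains in $\BfX$ throughout. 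After this step the family equals the $\Cr{\sx}$ map $\pfunc$ on a neighbourhood $\Uman$ of $\Xman$. I would then run the smoothing of part \ref{enum:th:B_CsMN__B:abs}, \emph{localized away from $\Xman$}: fix a cutoff $\chi\colon\Mman\to[0,1]$ with $\chi\equiv0$ on an open $\Vman$ satisfying $\Xman\subset\Vman\subset\overline{\Vman}\subset\Uman$ and with $\{0<\chi<1\}\subset\Uman\setminus\Xman$, and smooth only through $\chi$, e.g. via $\Func_{\ptt}+\chi\cdot(S_{\sigma\eps(\ptt)}\Func_{\ptt}-\Func_{\ptt})$ read in the tubular retraction. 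The outcome is unchanged, hence equal to $\pfunc$, on $\Vman\supset\Xman$ (so it stays in $\BfX$), it is $\Cr{\sx}$ everywhere (smooth where $\chi=1$, equal to the already $\Cr{\sx}$ family where $\chi=0$, and a $\Cr{\sx}$ interpolation in between, as there $\Func=\pfunc$ is $\Cr{\sx}$), and the homotopy is rel $\partial\Disk{n}$ inside $\sBB$; this compresses into $\AfX$.

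The main obstacle is the construction and control of the smoothing operator, not the homotopy bookkeeping. Concretely, I expect the real work to be: (i) making $S_{\eps}$ well defined and genuinely $\Cr{\sx}$-valued for $\Nman$-valued maps with $\Mman$ noncompact and with corners; (ii) proving the \emph{joint} continuity of $(\ptt,\ptm)\mapsto\jrm{\rx}{\ptm}{H_{\ptt,\sigma}}$ including at the boundary, where $\eps(\ptt)\to0$ and the operator degenerates to the identity; and (iii) upgrading the pointwise convergence $S_{\eps}\Func_{\ptt}\to\Func_{\ptt}$ in $\Wr{\rx}$ to control uniform enough over the compact parameter space $\Disk{n}$ to keep the rescaled homotopy inside the open set $\sBB$. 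Once such an operator is available, both parts follow from the compression criterion above, and the compactness addendum of \ref{enum:th:B_CsMN__B:abs} is immediate from the cited result of Milnor.
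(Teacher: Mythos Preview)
Your overall strategy (compression criterion via Lemma~\ref{lm:char_whe}, parametrized approximation, and pre-composition with the stabilizing isotopies in the relative case) matches the paper's. The difference lies in the smoothing step, and the paper's construction is substantially simpler in a way that dissolves precisely the obstacles you flag as (ii) and (iii).

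Rather than a parameter-dependent mollifier $S_{\sigma\eps(\ptt)}$, the paper treats the adjoint map $\Func:\Disk{n}\times\Mman\to\Nman$ as a single $\Cr{0,\rx}$ map (the disk being absorbed into the parameter space), picks \emph{one} $\Cr{\sx}$ map $\omega$ which is $\Wr{0,\rx}$-close to $\Func$ (density, Lemma~\ref{lm:approx_C0r}\ref{enum:lm:approx_C0r:dense}), and interpolates linearly inside a tubular neighbourhood $\Vman\supset\Nman\subset\bR^{\tau}$ with retraction $\rho$:
\[
H(\sigma,\pts,\ptm)=\rho\bigl(\Func(\pts,\ptm)+\sigma(1-\lambda(\ptm))\bigl(\omega(\pts,\ptm)-\Func(\pts,\ptm)\bigr)\bigr),
\]
where $\lambda$ is a cutoff equal to $1$ near $\Xman$ (and $\lambda\equiv0$ in the absolute case). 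The formula is visibly continuous in all variables, so there is no joint-continuity issue at a degenerate endpoint; and $H_{\sigma}\in\sBB$ for all $\sigma$ because the map $(\sigma,a)\mapsto\rho(\Func+\sigma(1-\lambda)(a-\Func))$ is continuous and sends $[0,1]\times\{\Func\}$ to $\Func$, so one simply shrinks the $\Wr{0,\rx}$-neighbourhood from which $\omega$ is chosen. The crucial observation replacing your ``rel $\partial\Disk{n}$'' device is condition~\ref{enum:good_homotopy:3} of Definition~\ref{def:good_homotopy}: if $\Func|_{\pts\times\Mman}$ is $\Cr{\sx}$ (as it is for $\pts\in\partial\Disk{n}$), then $H_{\sigma}|_{\pts\times\Mman}$ is $\Cr{\sx}$ automatically, because the formula is a $\Cr{\sx}$ expression in $\Func$ and the $\Cr{\sx}$ map $\omega$. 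Hence the homotopy is one of pairs without ever being relative to the boundary.

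Your approach is not wrong, but the work you anticipate in (i)--(iii) is real and avoidable. In particular, arranging that $(\ptt,\sigma,\ptm)\mapsto\jrm{\rx}{\ptm}{S_{\sigma\eps(\ptt)}\Func_{\ptt}}$ is continuous at the set $\{\eps(\ptt)=0\}$ and that the mollified maps stay in the tubular neighbourhood uniformly over $\Disk{n}$ would require an argument at least as long as the paper's entire proof. The single-$\omega$ linear interpolation buys you all of this for free; the price is only that the boundary moves within $\sAA$, which Lemma~\ref{lm:char_whe}\ref{enum:lm:char_whe:adm_maps_pairs} explicitly allows.
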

Notice that part~\ref{enum:th:B_CsMN__B:abs} is a particular case of~\ref{enum:th:B_CsMN__B:rel} for $\Xman=\varnothing$.
Moreover, Theorem~\ref{th:B_CsMN__B} is a particular case of Theorem~\ref{th:B_CsMN__B_rel_param} below.

For the convenience of the reader we recall a list of useful open sets of $\Cr{\rx}$ maps.

\begin{lemma}\label{lm:open_sets}{\rm(see e.g.~\cite{Hirsch:DiffTop})}
Table~\ref{tbl:open_sets} below contains a list of open subsets of $\Crm{\rx}{\Mman}{\Nman}$ in the topology $\Wr{\rx}$, $\rx\geq 1$.
Hence by Theorem~{\rm\ref{th:B_CsMN__B}\ref{enum:th:B_CsMN__B:abs}} the inclusion $G^{\sx} \subset G^{\rx}$ $(\rx<\sx\leq\infty)$ of the topological spaces from the same line of Table~\ref{tbl:open_sets} is a \myemph{homotopy equivalences}.

\begin{table}[!htpb]
\footnotesize\rm
\begin{tabular}{|c|p{6.3cm}|p{3cm}|} \hline
Set $G^{\rx}$ & Description & Condition \\
    &             & for openness \\  \hline
%%%%%%%%%%%%%%%%%%%%%%%%%%
$\Crm{\rx}{\Mman}{\Nman}$ &
all $\Cr{\rx}$ maps & always \\ \hline
%%%%%%%%%%%%%%%%%%%%%%%%%%
$\Diff^{\rx}(\Mman,\Nman)$ &
diffeomorphisms &
$\Mman$ is closed \\  \hline
%%%%%%%%%%%%%%%%%%%%%%%%%%
$\mathrm{Sub}^{\rx}(\Mman,\Nman)$ &
submersions &
$\Mman$ is compact  \\ \hline
%%%%%%%%%%%%%%%%%%%%%%%%%%
$\mathrm{Imm}_{\Kman}^{\rx}(\Mman,\Nman)$ &
maps $\func:\Mman\to\Nman$ being immersions on a subset $\Kman\subset\Mman$ &
$\Kman$ is compact and is contained in $\Int{\Mman}$ \\ \cline{1-2}
%%%%%%%%%%%%%%%%%%%%%%%%%%
$\mathrm{Emb}_{\Kman}^{\rx}(\Mman,\Nman)$ &
maps $\func:\Mman\to\Nman$ being embeddings on some neighbourhood $\Uman$ (depending on $\func$) of a subset $\Kman\subset\Mman$ &
  \\ \cline{1-2}
%%%%%%%%%%%%%%%%%%%%%%%%%%
$\pitchfork_{\Kman}^{\rx}\!\!(\Mman,\Nman; \Lman)$ &
maps which are transversal on a subset $\Kman \subset \Mman$ to a \myemph{closed} submanifold $\Lman\subset\Jr{r}{\Mman}{\Nman}$ &
                \\ \hline
%%%%%%%%%%%%%%%%%%%%%%%%%%
$\mathrm{Morse}^{\rx}(\Mman,\Nman)$ &
\myemph{Morse} (i.e.\! having only nondegenerate critical points) maps $\Mman\to\Nman$  &
$\rx\geq2$, $\Mman$ is a \myemph{closed} manifold, $\Nman$ is either $\bR$ or the circle $S^1$  \\  \hline
\end{tabular}
\caption{Open sets}\label{tbl:open_sets}
\end{table}
\end{lemma}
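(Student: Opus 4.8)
The plan is to verify, entry by entry, that each set $G^{\rx}$ in Table~\ref{tbl:open_sets} is open in the topology $\Wr{\rx}$ under the stated hypotheses, and then to read off the statement about homotopy type directly from Theorem~\ref{th:B_CsMN__B}\ref{enum:th:B_CsMN__B:abs}. The uniform mechanism I would exploit is the very definition of $\Wr{\rx}$ through the jet prolongation $\jet{\rx}\func\colon\Mman\to\Jr{\rx}{\Mman}{\Nman}$: a subbasic neighbourhood of $\func$ consists of all $\gfunc$ with $\jet{\rx}\gfunc(\Kman)\subset\Omega$, for a compact $\Kman\subset\Mman$ and an open $\Omega\subset\Jr{\rx}{\Mman}{\Nman}$ containing $\jet{\rx}\func(\Kman)$. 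Consequently, whenever the property defining $G^{\rx}$ can be written as ``$\jet{k}\func(\Kman)\subset\Omega$'' for a fixed open $\Omega$ in the appropriate jet space with $\Kman$ compact, openness of $G^{\rx}$ is automatic. This disposes at once of $\Crm{\rx}{\Mman}{\Nman}$ (take $\Omega$ the whole jet space). For submersions and immersions the key observation is that being a submersion (resp.\ immersion) at a point is a condition on the $1$-jet alone, namely that its linear part be surjective (resp.\ injective); these linear parts form an open subset $\Omega\subset\Jr{1}{\Mman}{\Nman}$, so taking $\Kman=\Mman$ (compact) for submersions and the given compact $\Kman\subset\Int\Mman$ for immersions, the principle above applies. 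The hypothesis $\Kman\subset\Int\Mman$ is what ensures that near $\Kman$ the jet bundle is the ordinary one and the tangent map is not constrained by corners.

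For the embedding entry I would combine the immersion argument with stability of injectivity: a map embedding a neighbourhood of the compact $\Kman$ remains, after a sufficiently $\Cr{1}$-small perturbation, an immersion and injective on a (possibly smaller) neighbourhood of $\Kman$, since a failure of injectivity along a sequence of perturbations would yield, by compactness, a limiting pair of points contradicting either the injectivity or the immersivity of the original map. The diffeomorphism entry is the same argument made global: for closed $\Mman$, a $\Cr{1}$-small perturbation of a diffeomorphism is everywhere a local diffeomorphism (openness of invertible $1$-jets), is injective by the same compactness argument, and is surjective because its image is simultaneously open and closed; hence it is again a diffeomorphism. The transversality entry I would settle by the classical openness of transversal intersections with a \emph{closed} submanifold, applied to the prolonged map $\jet{\rx}\func$ and the closed $\Lman\subset\Jr{\rx}{\Mman}{\Nman}$: transversality on the compact $\Kman$ is an open condition, the closedness of $\Lman$ being exactly what makes the alternative $\jet{\rx}\func(p)\notin\Lman$ open. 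Finally, the Morse entry reduces to transversality: for $\rx\geq2$ and $\Nman$ equal to $\bR$ or $S^1$, being Morse is equivalent to the section $\jet{1}\func$ being transversal on $\Kman=\Mman$ to the closed submanifold of singular $1$-jets.

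The main obstacle I anticipate is the careful bookkeeping of jet orders in the last two entries: transversality of $\jet{\rx}\func$ to $\Lman$ constrains the \emph{first derivative} of the prolongation, hence effectively a jet of $\func$ of one order higher, so one must track precisely the weak $C^{\rx}$-type topology in which openness genuinely holds and reconcile it with the indexing of Table~\ref{tbl:open_sets} (for the Morse case $\Lman\subset\Jr{1}{\Mman}{\Nman}$ and the threshold $\rx\geq2$ is exactly this shift). This, together with the global injectivity and surjectivity arguments for the diffeomorphism and embedding entries, which go beyond pointwise jet conditions and use compactness essentially, is where the non-formal work lies; all of these facts are standard and may be cited from~\cite{Hirsch:DiffTop}. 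Once openness is established the conclusion is immediate: by Theorem~\ref{th:B_CsMN__B}\ref{enum:th:B_CsMN__B:abs} the inclusion $G^{\sx}\subset G^{\rx}$ of open sets is a weak homotopy equivalence, and whenever $\Mman$ is compact (in particular for the diffeomorphism, submersion, and Morse entries, where $\Mman$ is closed) both spaces have the homotopy type of CW-complexes by~\cite[Corollary~2]{Milnor:TrAMS:1959}, so the inclusion is in fact a homotopy equivalence.
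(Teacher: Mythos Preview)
Your proposal is correct and more self-contained than the paper's own proof, but it takes a different route. The paper does not re-prove openness from the jet-prolongation description of $\Wr{\rx}$; instead it simply cites the relevant results from Hirsch (Chapter~2, Theorems~1.1, 1.2, 1.4, 1.6 for immersions, submersions, embeddings, diffeomorphisms; Chapter~6, Theorem~1.2 for Morse maps; Chapter~2, \S1 Exercises~13--14 and Chapter~3, Theorem~2.8 for the $\Kman$-local immersion, embedding, and transversality entries), which establish openness in the \emph{strong} topology $\Sr{\rx}$, and then observes that $\Sr{\rx}=\Wr{\rx}$ when $\Mman$ is compact. Your approach, by contrast, argues directly in the weak topology via jet conditions and compactness arguments; this is essentially a sketch of the proofs behind the cited Hirsch results, so it is more explanatory but longer. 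Either is fine.

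Your final paragraph is more careful than the paper's statement on one point: you correctly note that the upgrade from weak homotopy equivalence to genuine homotopy equivalence via \cite[Corollary~2]{Milnor:TrAMS:1959} requires $\Mman$ compact, which is guaranteed for the diffeomorphism, submersion, and Morse rows but not for the $\Kman$-local immersion, embedding, and transversality rows. The lemma's blanket assertion of ``homotopy equivalence'' glosses over this.
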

\begin{proof}
For $\Crm{\rx}{\Mman}{\Nman}$ the statement is trivial, and the spaces $\mathrm{Imm}_{\Mman}^{\rx}(\Mman,\Nman)$, $\mathrm{Sub}^{\rx}(\Mman,\Nman)$, $\mathrm{Emb}_{\Mman}^{\rx}(\Mman,\Nman)$, $\Diff^{\rx}(\Mman,\Nman)$, and $\mathrm{Morse}^{\rx}(\Mman,\Nman)$ are open in the \myemph{strong} topology $\Sr{\rx}$ on $\Crm{\rx}{\Mman}{\Nman}$, see respectively \cite[Chapter~2, Theorems~1.1, 1.2, 1.4, 1.6]{Hirsch:DiffTop} and \cite[Chapter~6, Theorem~1.2]{Hirsch:DiffTop}.
But that topology coincides with $\Wr{\rx}$ for compact $\Mman$, whence the above sets are also open in $\Wr{\rx}$.

Finally, the sets $\mathrm{Imm}_{\Kman}^{\rx}(\Mman,\Nman)$, $\mathrm{Emb}_{\Kman}^{\rx}(\Mman,\Nman)$, and $\pitchfork_{\Kman}^{\rx}\!\!(\Mman,\Nman; \Lman)$ with compact $\Kman$ are open due to \cite[Chapter~2, \S1, Exercises~13, 14]{Hirsch:DiffTop} and \cite[Chapter 3, Theorem~2.8]{Hirsch:DiffTop}.
\end{proof}

\begin{remark}\rm
In \cite[Chapter~3, Theorem~2.2]{Hirsch:DiffTop} it was introduced a notion of the so-called ``\myemph{rich}'' class of $\Cr{\rx}$ maps $\Mman\to\Nman$ which constitute an open subset of $\Crm{\rx}{\Mman}{\Nman}$.
See also exercises in \cite[Chapters~2 and 3]{Hirsch:DiffTop} for other examples of open subsets of $\Crm{\rx}{\Mman}{\Nman}$.
\end{remark}

\subsection*{Parametrized version of Theorem~\ref{th:B_CsMN__B}}
Let $\Lman$ be another smooth manifold.
Then each $\Cr{\rx}$ map $\Func:\Lman\times\Mman\to\Nman$ induces a map $\func:\Lman \to \Crm{\rx}{\Mman}{\Nman}$ defined by $\func(\ptl)(\ptm) = \Func(\ptl,\ptm)$.
The maps $\Func$ and $\func$ will be called \myemph{adjoint} to each other.
One can check (see Lemma~\ref{lm:exp_law} below) that $\func$ is continuous and the correspondence $\Func\mapsto\func$ is a \myemph{continuous injective} map
\begin{equation}\label{equ:EXP}
	\EXP:\Crm{\rx}{\Lman\times\Mman}{\Nman} \to \Cont{\Lman}{\Crm{\rx}{\Mman}{\Nman}}.
\end{equation}

Conversely, every map $\func:\Lman \to \Crm{\rx}{\Mman}{\Nman}$ induces the adjoint mapping $\Func:\Lman\times\Mman\to\Nman$ by $\Func(\ptl,\ptm) = \func(\ptl)(\ptm)$ which is $\Cr{\rx}$ on sets $\ptl\times\Mman$ for each $\ptl\in\Lman$.
Moreover, continuity of $\func$ means that the ``$\rx$-jet of $\Func$ along $\Mman$'' (i.e. partial derivatives along $\Mman$ of its coordinate functions up to order $\rx$) are continuous on $\Lman\times\Mman$, (see Lemma~\ref{lm:exp_law_finite} and Example~\ref{exmp:C0r_func}).
On the other hand, $\Func$ is not necessarily differentiable in $\Lman$.

Also notice that if $\LMiso:\Lman\times\Mman\to\Lman\times\Mman$ is a $\Cr{\sx}$ map, then the composition $\Func\circ\LMiso$ does not necessarily induces a map $\Lman\to\Crm{\rx}{\Mman}{\Nman}$.
The reason is that $\Func$ is differentiable only along the sets $\{ \ptl\times\Mman \}_{\ptl\in\Lman}$.
Therefore if $\LMiso$ does not preserve the partition into those submanifolds, i.e. $\LMiso(\ptl\times\Mman)$ is not contained in some $\ptl'\times\Mman$, then the restriction $\Func\circ\LMiso|_{\ptl}:\ptl\times\Mman\to\Nman$ might not be differentiable.

Let $\pr{\Lman}:\Lman\times\Mman\to\Lman$ be the natural projection regarded as a trivial fibration.
Then a $\Cr{\rx}$ map $\LMiso=(\Hl,\Hm):\Lman\times\Mman\to\Lman\times\Mman$ will be called a \myemph{morphism} of the fibration $\pr{\Lman}$ whenever
$\pr{\Lman}\circ\LMiso = \Hl\circ\pr{\Lman}$.
This means that $\LMiso(\ptl,\ptm)=(\Hl(\ptl),\Hm(\ptl,\ptm))$, i.e. its first coordinate function $\Hl$ does not depend on $\ptm\in\Mman$ and induces a $\Cr{\rx}$ map $\Hl:\Lman\to\Lman$.
One easily checks that if $\LMiso$ is a morphism of $\pr{\Lman}$, then $\Func\circ\LMiso$ induces a continuous map $\Lman\to\Crm{\rx}{\Mman}{\Nman}$, (see Lemma~\ref{lm:C0r_backward_composition}).
Thus morphisms of the trivial fibration $\pr{\Lman}$ are the \myemph{right} kind of $\Cr{\rx}$ self-maps of $\Lman\times\Mman$ acting on the space of continuous maps $\Lman\to\Crm{\rx}{\Mman}{\Nman}$.

\begin{theorem}\label{th:B_CsMN__B_rel_param}
\begin{enumerate}[wide, label={\rm(\arabic*)}]
\item\label{enum:th:B_CsMN__B_rel_param:abs}
Let $0\leq \rx \leq \sx \leq \infty$, $\sBB \subset \Cont{\Lman}{\Crm{\rx}{\Mman}{\Nman}}$ be an open subset, and
\[ \sAA = \EXP^{-1}(\sBB) \cap \Crm{\sx}{\Lman\times\Mman}{\Nman} \]
be the set of $\Cr{\sx}$ maps $\Lman\times\Mman\to\Nman$ whose adjoint map $\Lman\to\Crm{\rx}{\Mman}{\Nman}$ belongs to $\sBB$.
Endow $\sAA$ with the topology $\Wr{\sx}$.
Then the inclusion $\EXP: \sAA \hookrightarrow \sBB$ is a weak homotopy equivalence.

\item\label{enum:th:B_CsMN__B_rel_param:rel}
Let $\Xman\subset\Lman\times\Mman$ be a closed subset, $\pfunc\in\Crm{\sx}{\Lman\times\Mman}{\Nman}$,
\begin{align*}
	\BfX &= \{\gfunc \in \sBB \mid \gfunc(\ptl)(\ptm) = \pfunc(\ptl,\ptm) \ \text{for} \ (\ptl,\ptm)\in\Xman \}, \\
	\AfX &= \EXP^{-1}(\BfX) \cap \Crm{\sx}{\Lman\times\Mman}{\Nman} =
	\{ \GFunc\in\sAA \mid \GFunc = \pfunc \ \text{on} \ \Xman \}.
\end{align*}
Endow $\BfX$ with the induced compact open topology from $\Cont{\Lman}{\Crm{\rx}{\Mman}{\Nman}}$, and $\AfX$ with the topology $\Wr{\sx}$.
Suppose there exists a pair of $\Cr{\sx}$ isotopies
\begin{align*}
& \LMiso=(\Hl,\Hm):[0,1]\times\Lman\times\Mman \to \Lman\times\Mman,  &
  \Niso:[0,1]\times\Nman \to \Nman,  &
\end{align*}
\myemph{stabilizing $\pfunc$ near $\Xman$} and such that each $\LMiso_{\ptt}$ consists of morphisms of the trivial fibration $\pr{\Lman}$, i.e. its coordinate function $\Hl:[0,1]\times\Lman\times\Mman \to \Lman$ does not depend on $\ptm\in\Mman$.
Then the inclusion $\EXP: \AfX \hookrightarrow \BfX$ is a weak homotopy equivalence.

\item\label{enum:th:B_CsMN__B_rel_param:pairs}
Statements~\ref{enum:th:B_CsMN__B_rel_param:abs} and~\ref{enum:th:B_CsMN__B_rel_param:rel} hold if we replace $\Crm{\rx}{\Mman}{\Nman}$ with the space $\CrmPairs{\rx}{\Mman}{\partial\Mman}{\Nman}{\partial\Nman}$ of $\Cr{\rx}$ maps $\func:\Mman\to\Nman$ such that $\func(\partial\Mman) \subset \partial\Nman$.
\end{enumerate}
\end{theorem}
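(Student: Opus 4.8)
The plan is to verify the standard compression criterion for a weak homotopy equivalence of an inclusion: $\incl:A\hookrightarrow B$ is a weak homotopy equivalence if and only if for every $n\geq0$ every continuous map of pairs $\Phi:(D^{n},S^{n-1})\to(B,A)$ is homotopic rel $S^{n-1}=\partial D^{n}$ to a map whose image lies in $A$ (for $n=0$ this gives surjectivity on $\pi_{0}$, for $n=1$ injectivity, and for $n\geq2$ bijectivity on all homotopy groups at all basepoints). Since part~\ref{enum:th:B_CsMN__B_rel_param:abs} is the case $\Xman=\varnothing$ of part~\ref{enum:th:B_CsMN__B_rel_param:rel}, I would first prove~\ref{enum:th:B_CsMN__B_rel_param:abs} and then prefix it with a stabilization step to obtain~\ref{enum:th:B_CsMN__B_rel_param:rel}. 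Fix a continuous $\Phi:(D^{n},S^{n-1})\to(\sBB,\sAA)$. By the exponential law (Lemma~\ref{lm:exp_law}), applied with parameter space $D^{n}\times\Lman$, the datum $\Phi$ is the same as a map $\widehat{\Phi}:D^{n}\times\Lman\times\Mman\to\Nman$ that is $\Cr{\rx}$ along $\Mman$, has continuous $\rx$-jet along $\Mman$, and is \emph{jointly} $\Cr{\sx}$ on $\{w\}\times\Lman\times\Mman$ for each $w\in S^{n-1}$. The goal thus becomes: deform $\widehat{\Phi}$, keeping its adjoint family inside $\sBB$, and rel the parameters in $S^{n-1}$, into a family that is jointly $\Cr{\sx}$ on $\Lman\times\Mman$ for \emph{every} $w\in D^{n}$.

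The heart of the argument is a continuous parametrized smoothing operator $\sigma$ acting in the $\Lman\times\Mman$ directions only. I would fix a Whitney embedding $\Nman\subset\bR^{q}$ with a tubular-neighbourhood retraction $\pi$, an exhaustion of $\Lman\times\Mman$ by compacta, and a locally finite smooth partition of unity subordinate to charts; then $\sigma(\widehat{\Phi},t)$ is obtained by mollifying $\widehat{\Phi}$ (regarded as a $\bR^{q}$-valued map) in the $\Lman\times\Mman$ variables at a scale $t$ that is allowed to decay towards infinity, followed by the retraction $\pi$. Convolution in the $\Lman$ variables upgrades mere continuity to $\Cr{\infty}$, convolution in the $\Mman$ variables preserves and upgrades the $\Cr{\rx}$ regularity, so for $t>0$ the result is jointly $\Cr{\sx}$ on $\Lman\times\Mman$, while $\sigma(\widehat{\Phi},0)=\widehat{\Phi}$; because the target carries the \emph{weak} Whitney topology $\Wr{\rx}$, convergence need only be controlled on compacta, and $\sigma$ is jointly continuous in $(w,t)$. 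Compactness of $\Phi(D^{n})$ together with openness of $\sBB$ yields $\delta>0$ with $\sigma(\widehat{\Phi},t)$ having adjoint in $\sBB$ for all $t\leq\delta$. Choosing a cut-off $\tau:D^{n}\to[0,\delta]$ that vanishes exactly on $S^{n-1}$ and is positive on the interior, the homotopy $H(w,u)$ given by applying $\sigma$ at scale $u\,\tau(w)$ to $\Phi(w)$ stays in $\sBB$, is constant (hence rel) on $S^{n-1}$ where $\tau=0$, and at $u=1$ yields a family that is jointly $\Cr{\sx}$ on the interior (where $\tau>0$) and already $\Cr{\sx}$ on $S^{n-1}$ (where $\Phi\in\sAA$); thus $H(\cdot,1)$ lands in $\sAA$. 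This proves~\ref{enum:th:B_CsMN__B_rel_param:abs}.

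For~\ref{enum:th:B_CsMN__B_rel_param:rel} one must additionally preserve the value $\pfunc$ on $\Xman$, which naive mollification destroys; here the stabilizing isotopies enter. Consider the fibrewise operation sending the adjoint $\widehat{\gfunc}$ of $\gfunc\in\BfX$ to the adjoint of $\Niso_{t}\circ\widehat{\gfunc}\circ\LMiso_{t}$: because each $\LMiso_{t}$ is a morphism of the trivial fibration $\pr{\Lman}$, Lemma~\ref{lm:C0r_backward_composition} guarantees that $\Niso_{t}\circ\widehat{\gfunc}\circ\LMiso_{t}$ again induces a continuous map $\Lman\to\Crm{\rx}{\Mman}{\Nman}$, and by the Remark following Definition~\ref{def:stab_f_near_X_no_L} it equals $\pfunc$ on a neighbourhood of $\Xman$, hence lies in $\BfX$ and is $\Cr{\sx}$ near $\Xman$. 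Using a second cut-off $\rho:D^{n}\to[0,\delta']$ vanishing on $S^{n-1}$ (with $\delta'$ chosen small by compactness and openness so the operation stays in $\sBB$), the homotopy $w\mapsto\Niso_{\rho(w)}\circ\widehat{\Phi(w)}\circ\LMiso_{\rho(w)}$ runs inside $\BfX$, is constant on $S^{n-1}$, and ends at a family every interior member of which equals $\pfunc$, and is $\Cr{\sx}$, near $\Xman$. I would then run the smoothing of the previous paragraph on this new family, but localized away from $\Xman$: replacing $\tau$ by $\tau\cdot\beta$, where $\beta$ is a smooth function equal to $0$ on a neighbourhood of $\Xman$ and $1$ outside a slightly larger one, so that the mollification never touches points near $\Xman$. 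The end map is untouched (hence equal to $\pfunc$ and $\Cr{\sx}$) near $\Xman$ and jointly $\Cr{\sx}$ elsewhere, so it lies in $\AfX$; concatenating the two homotopies gives the required compression rel $S^{n-1}$ inside $\BfX$.

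Finally, part~\ref{enum:th:B_CsMN__B_rel_param:pairs} follows by the same scheme once the smoothing is made to respect the incidence $\func(\partial\Mman)\subset\partial\Nman$: I would take a Whitney embedding of the pair $(\Nman,\partial\Nman)$ into a half-space $\bR^{q-1}\times[0,\infty)$ with a boundary-preserving tubular retraction, and mollify using fixed collars of $\partial\Mman$ and $\partial\Nman$ so that the boundary condition is preserved throughout. The main obstacle, and the place where the genuine work lies, is the construction of the continuous parametrized smoothing operator $\sigma$ with all required properties simultaneously: joint continuity with respect to the weak Whitney topology, the normalization $\sigma(\cdot,0)=\id$, joint $\Cr{\sx}$-regularity for $t>0$, compatibility with the localization by $\beta$ near $\Xman$, and (in the pairs case) preservation of the boundary incidence. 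This is precisely the parametrized form of the $\Cr{\sx}$ approximation theorem, which I expect to be isolated and proved as a separate lemma and then invoked here mechanically.
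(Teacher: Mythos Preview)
Your overall architecture---compression criterion, passage to the adjoint $\widehat{\Phi}:D^{n}\times\Lman\times\Mman\to\Nman$, use of the stabilizing isotopies in the relative case, and the boundary-respecting embedding for part~\ref{enum:th:B_CsMN__B_rel_param:pairs}---matches the paper's. But the smoothing step contains a genuine gap.

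You want the homotopy to be \emph{rel $S^{n-1}$} and achieve this by letting the mollification scale $\tau(w)$ vanish on $S^{n-1}$. The conclusion ``$H(\cdot,1)$ lands in $\sAA$'' then requires $w\mapsto H(w,1)$ to be continuous into the $\Wr{\sx}$ topology of $\sAA$, not merely to take values in $\sAA$ set-theoretically. This fails in general: as $w\to w_{0}\in S^{n-1}$ the input $\Phi(w)$ is only $\Cr{0,\rx}$, and the $\Cr{\sx}$-norm of a mollification at scale $t$ of a merely $\Cr{\rx}$ function can blow up like $t^{-(\sx-\rx)}$; nothing forces $\Phi(w)\to\Phi(w_{0})$ in $\Wr{0,\rx}$ fast enough to compensate. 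Concretely, with $\rx=0$, $\sx=1$, $\Lman$ a point, $\Mman=\Nman=\bR$, $D^{n}=[-1,1]$, and $\Phi(w)(x)=(1-w^{2})\,x\sin(1/x)$, one has $\Phi(\pm1)=0\in\sAA$, yet mollifying $\Phi(w)$ at scale $\tau(w)=1-w^{2}$ yields maps whose $\Cr{1}$ norm does not tend to~$0$ as $w\to\pm1$. The same objection applies to the cut-off $\rho(w)$ in the stabilization step.

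The paper avoids this by \emph{not} insisting on rel~$S^{n-1}$. Lemma~\ref{lm:char_whe} shows it suffices to exhibit a $\Wr{\sx}$-admissible homotopy of pairs whose end map is continuous into $(\sAA,\Wr{\sx})$. The smoothing (Lemma~\ref{lm:approx_C0r}) is then \emph{linear interpolation} $\rho\circ\bigl(\widehat{\Phi}+t(1-\lambda)(\omega-\widehat{\Phi})\bigr)$ to a single fixed $\Cr{\sx}$ approximant $\omega$ of the whole adjoint $\widehat{\Phi}$, at a parameter $t$ independent of $w$. This homotopy moves $S^{n-1}$, but remains $\Wr{\sx}$-admissible because on $\{w\}\times\Lman\times\Mman$ with $w\in S^{n-1}$ both $\widehat{\Phi}$ and $\omega$ are $\Cr{\sx}$; and at $t=1$ the result is globally $\Cr{\sx}$ on $D^{n}\times\Lman\times\Mman$, so its adjoint is automatically continuous into $\Wr{\sx}$. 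Likewise, the stabilization in Theorem~\ref{th:B_CsMN__B_rel_param_U} is applied at a constant time $\varepsilon$, not at a $w$-dependent time vanishing on $S^{n-1}$.

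Your argument is easily repaired along either line: drop the cut-offs $\tau(w),\rho(w)$ and mollify/stabilize at a constant small parameter, accepting a $\Wr{\sx}$-admissible (rather than rel-boundary) homotopy; or replace scale-dependent mollification by interpolation to a single $\Cr{\sx}$ approximant as the paper does.
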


Again part~\ref{enum:th:B_CsMN__B_rel_param:abs} is a particular case of~\ref{enum:th:B_CsMN__B_rel_param:rel} for $\Xman=\varnothing$.
Moreover, all the Theorem~\ref{th:B_CsMN__B} is a particular case of Theorem~\ref{th:B_CsMN__B_rel_param} for $\Lman$ being a point.
Statement~\ref{enum:th:B_CsMN__B_rel_param:pairs} will be proved in Section~\ref{sect:maps_of_pairs} (Theorem~\ref{th:B_CsMN__B_rel_param_pairs}).

\subsection*{Few applications of Theorem~\ref{th:B_CsMN__B_rel_param}}
Let $\rx \geq 1$, $\Mman$ be a smooth manifold, $\Iman = [0,1]$, $\Jman = [0,0.1] \cup [0.9,1]$, and
\begin{align*}
\LoopDM   &:= \CrmPairs{0}{\Iman}{\partial\Iman}{\Diff^{\rx}(\Mman)}{\id_{\Mman}} \\ 
& \qquad = 
\{ \gamma\in\Cont{\Iman}{\Diff^{\rx}(\Mman)} \mid \gamma(0)=\gamma(1)=\id_{\Mman} \},
\\
\LoopDMJ  &:= \CrmPairs{0}{\Iman}{\Jman}{\Diff^{\rx}(\Mman)}{\id_{\Mman}} \\ 
& \qquad = 
\{ \gamma\in \LoopDMJ   \mid \gamma(\ptt)=\id_{\Mman} \ \text{for all} \ t\in\Jman \}.
\end{align*}
%\begin{align*}
%\LoopDM  & := \CrmPairs{0}{\Iman}{\partial\Iman}{\Diff^{\rx}(\Mman)}{\id_{\Mman}} = 
%\{ \gamma\in\Cont{\Iman}{\Diff^{\rx}(\Mman)} \mid \gamma(0)=\gamma(1)=\id_{\Mman} \}
%\\
%\LoopDMJ & := \CrmPairs{0}{\Iman}{\Jman}{\Diff^{\rx}(\Mman)}{\id_{\Mman}}
%\end{align*}
%\begin{align*}
%\LoopDM  & := \CrmPairs{0}{\Iman}{\partial\Iman}{\Diff^{\rx}(\Mman)}{\{\id_{\Mman}\}}, &
%\LoopDMJ & := \CrmPairs{0}{\Iman}{\Jman}{\Diff^{\rx}(\Mman)}{\{\id_{\Mman}\}}.
%\end{align*}
Thus $\LoopDM$ is the \myemph{loop space} of the group of $\Cr{\rx}$ diffeomorphisms $\Diff^{\rx}(\Mman)$ of $\Mman$ at $\id_{\Mman}$, and $\LoopDMJ$ is the subspace consisting of loops that are constant on $\Jman$, i.e. ``at the beginning and at the end of a path''.
It will be shown in Lemma~\ref{lm:loop_inclusion} below that the inclusion $\LoopDMJ \subset \LoopDM$ is a homotopy equivalence.
Let also
\[
	\DiffIMdIM = \{ \dif\in\Diff^{\rx}(\Iman\times\Mman) \mid \pr{\Iman} \circ \dif = \pr{\Iman} \ \text{and} \ \dif|_{\Jman\times\Mman} = \id_{\Jman\times\Mman} \}
\]
be the group of $\Cr{\rx}$ diffeomorphisms of $\Iman\times\Mman$ which are fixed on $\Jman\times\Mman$ and leave each set $\ptt\times \Mman$ invariant, i.e. preserving the projection $\pr{\Iman}:\Iman\times \Mman \to \Iman$.
Thus each $\Func\in \DiffIMdIM$ is given by $\Func(\ptt,\ptm) = \bigl( \ptt, \eta(\ptt,\ptm) \bigr)$ for some $\Cr{\rx}$ isotopy $\eta:\Iman\times \Mman \to \Mman$ such that $\eta_{\ptt}=\id_{\Mman}$ for $\ptt\in\Jman$.
Hence $\DiffIMdIM$ is homeomorphic with the set of all such isotopies endowed with the topology $\Wr{\rx}$.
It also follows from definitions that $\EXP\bigl(\DiffsIMdIM\bigr) \subset \LoopDMJ$.
\begin{theorem}\label{th:loop_space}
If $\Mman$ is compact, then for $1\leq\rx\leq\sx\leq\infty$ the inclusion
\begin{equation}\label{equ:Diff_in_Loop}
	\EXP: \DiffsIMdIM \ \hookrightarrow \ \LoopDMJ
\end{equation}
is a weak homotopy equivalence.
Hence (by Lemma~\ref{lm:loop_inclusion}) the inclusion \[\EXP: \DiffsIMdIM \ \hookrightarrow \ \LoopDM\] is a weak homotopy equivalence as well.
\end{theorem}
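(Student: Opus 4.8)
The plan is to deduce Theorem~\ref{th:loop_space} from Theorem~\ref{th:B_CsMN__B_rel_param}\ref{enum:th:B_CsMN__B_rel_param:rel} by specialising its data so that $\BfX$ becomes $\LoopDMJ$ and $\AfX$ becomes $\DiffsIMdIM$. Concretely, I take $\Lman := \Iman$ as the parameter manifold, keep $\Mman$ as the fibre, let $\Nman := \Mman$ be the target, set $\Xman := \Jman\times\Mman$ (a closed subset with non-empty interior, since $\Jman$ is a union of non-degenerate intervals), and let $\pfunc := \pr{\Mman}\colon\Iman\times\Mman\to\Mman$, $\pfunc(\ptl,\ptm)=\ptm$, which is $\Cr{\infty}$. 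For $\sBB$ I take the set of continuous loops in $\Diff^{\rx}(\Mman)$, i.e.\ those $\gamma\in\Cont{\Iman}{\Crm{\rx}{\Mman}{\Mman}}$ with $\gamma(\Iman)\subset\Diff^{\rx}(\Mman)$; this is open because $\Diff^{\rx}(\Mman)$ is open in $\Crm{\rx}{\Mman}{\Mman}$ by Lemma~\ref{lm:open_sets} and $\Iman$ is compact, so $\{\gamma\mid\gamma(\Iman)\subset\Diff^{\rx}(\Mman)\}$ is a subbasic open set of the compact-open topology.

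With these choices $\BfX=\{\gamma\in\sBB\mid \gamma(\ptl)(\ptm)=\ptm \text{ for } (\ptl,\ptm)\in\Jman\times\Mman\}$, i.e.\ $\gamma(\ptl)=\id_{\Mman}$ for $\ptl\in\Jman$, which is exactly $\LoopDMJ$. For $\AfX$ I must check that the parametrized condition recovers $\DiffsIMdIM$. An element of $\AfX$ is a $\Cr{\sx}$ map $\GFunc\colon\Iman\times\Mman\to\Mman$ each of whose slices $\GFunc(\ptl,\cdot)$ lies in $\Diff^{\rx}(\Mman)$ and which equals $\pr{\Mman}$ on $\Jman\times\Mman$. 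Here the first genuine point arises: each slice $\GFunc(\ptl,\cdot)$ is a priori only a $\Cr{\rx}$ diffeomorphism, yet it is the restriction of the $\Cr{\sx}$ map $\GFunc$, hence $\Cr{\sx}$ and a bijection with everywhere non-singular differential, so it is in fact a $\Cr{\sx}$ diffeomorphism; moreover the inverse function theorem with parameters shows that $(\ptl,\ptm)\mapsto(\ptl,\GFunc(\ptl,\ptm))$ is a $\Cr{\sx}$ diffeomorphism of $\Iman\times\Mman$ preserving $\pr{\Iman}$ and fixed on $\Jman\times\Mman$. This identifies $\AfX$ with $\DiffsIMdIM$, and the two $\Wr{\sx}$ topologies coincide since both are induced from $\Crm{\sx}{\Iman\times\Mman}{\Mman}$. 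Under these identifications the inclusion $\EXP\colon\AfX\hookrightarrow\BfX$ is precisely~\eqref{equ:Diff_in_Loop}.

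It then remains to produce a pair of $\Cr{\sx}$ stabilising isotopies as in Theorem~\ref{th:B_CsMN__B_rel_param}\ref{enum:th:B_CsMN__B_rel_param:rel}, with the $\Lman$-coordinate map independent of $\ptm$. I will take $\Niso_{\ptt}=\id_{\Mman}$ and $\Hm_{\ptt}(\ptl,\ptm)=\ptm$, so that only the $\Iman$-coordinate moves: $\LMiso_{\ptt}(\ptl,\ptm)=(\Hl_{\ptt}(\ptl),\ptm)$. For $\Hl$ I use the flow of a $\Cr{\infty}$ vector field $\vfunc$ on $\Iman$ with $\vfunc(0)=\vfunc(1)=0$, having a single interior zero at some $c\in(0.1,0.9)$, negative on $(0,c)$ and positive on $(c,1)$; its time-$\ptt$ map $\Hl_{\ptt}$ fixes $0$ and $1$, is an increasing diffeomorphism, and pushes $(0,c)$ toward $0$ and $(c,1)$ toward $1$. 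Hence for $\ptt>0$ one gets $\Hl_{\ptt}([0,0.1])\subset[0,0.1)$ and $\Hl_{\ptt}([0.9,1])\subset(0.9,1]$, so $\LMiso_{\ptt}(\Jman\times\Mman)\subset\Int{\Xman}$, giving condition~\ref{enum:def:stab_f_near_X_no_L:shrinks}. Condition~\ref{enum:def:stab_f_near_X_no_L:id} is clear, and condition~\ref{enum:def:stab_f_near_X_no_L:stab} holds everywhere because $\Niso_{\ptt}\circ\pfunc\circ\LMiso_{\ptt}(\ptl,\ptm)=\pr{\Mman}(\Hl_{\ptt}(\ptl),\ptm)=\ptm=\pfunc(\ptl,\ptm)$; finally $\LMiso_{\ptt}$ is a morphism of $\pr{\Iman}$ since $\Hl_{\ptt}$ does not depend on $\ptm$. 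Theorem~\ref{th:B_CsMN__B_rel_param}\ref{enum:th:B_CsMN__B_rel_param:rel} then yields that $\EXP\colon\DiffsIMdIM\hookrightarrow\LoopDMJ$ is a weak homotopy equivalence, and composing with the homotopy equivalence $\LoopDMJ\subset\LoopDM$ of Lemma~\ref{lm:loop_inclusion} gives the final assertion.

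I expect the main obstacle to be not the (easy) construction of the stabilising isotopy but the careful verification that $\AfX$ really coincides with $\DiffsIMdIM$: namely that a $\Cr{\sx}$ family whose slices are only $\Cr{\rx}$ diffeomorphisms automatically assembles into a $\Cr{\sx}$ diffeomorphism of $\Iman\times\Mman$, together with matching the two $\Wr{\sx}$ topologies. If $\Mman$ has non-empty boundary, one should instead run the same argument through the maps-of-pairs version Theorem~\ref{th:B_CsMN__B_rel_param}\ref{enum:th:B_CsMN__B_rel_param:pairs}, replacing $\Crm{\rx}{\Mman}{\Mman}$ by $\CrmPairs{\rx}{\Mman}{\partial\Mman}{\Mman}{\partial\Mman}$, since diffeomorphisms preserve $\partial\Mman$.
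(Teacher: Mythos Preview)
Your proof is correct and follows essentially the same strategy as the paper: specialise Theorem~\ref{th:B_CsMN__B_rel_param}\ref{enum:th:B_CsMN__B_rel_param:rel} (or~\ref{enum:th:B_CsMN__B_rel_param:pairs} when $\partial\Mman\neq\varnothing$) with $\Lman=\Iman$, $\Xman=\Jman\times\Mman$, and $\sBB=\Cont{\Iman}{\Diff^{\rx}(\Mman)}$, then exhibit a stabilising pair of isotopies moving only the $\Iman$-coordinate. The paper packages things slightly differently---it takes $\pfunc=\id_{\Iman\times\Mman}$ and $\Niso_{\ptt}=\LMiso_{\ptt}^{-1}$ (both on $\Iman\times\Mman$), and builds $\LMiso_{\ptt}$ as the convex combination $(1-\ptt)\ptl+\ptt\eta(\ptl)$ rather than a flow---but this is just the other side of the identification $\GFunc\leftrightarrow(\ptl,\ptm)\mapsto(\ptl,\GFunc(\ptl,\ptm))$ you already spelled out, and your choice $\pfunc=\pr{\Mman}$, $\Niso_{\ptt}=\id_{\Mman}$ is the cleaner formulation for the $\Nman=\Mman$ setup.
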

\begin{proof}
Since $\Mman$ is compact, $\Diff^{\rx}(\Mman)$ is open in $\CrmPairs{\rx}{\Mman}{\partial\Mman}{\Mman}{\partial\Mman}$, whence the set $\sBB:= \Cont{\Iman}{\Diff^{\rx}(\Mman)}$ is open in $\Crmx{0}{\Iman}{\CrmPairs{\rx}{\Mman}{\partial\Mman}{\Mman}{\partial\Mman}}$.
Denote $\Xman = \Jman\times\Mman$ and $\pfunc=\id_{\Iman\times\Mman}$.
Then in the notation of Theorem~\ref{th:B_CsMN__B_rel_param},
\begin{align*}
\BfX &:= \LoopDMJ, &
\AfX &:= \DiffsIMdIM.
\end{align*}

Fix a $\Cinfty$ function $\eta:[0,1]\to[0,1]$ such that $\eta[0,0.1] = 0$, $\eta[0.9,1] = 1$, and $\eta$ is strictly monotone on $[0.1,0.9]$.
Define now the following isotopies $\LMiso,\Niso:[0,0.5] \times \Iman\times\Mman \to \Iman\times\Mman$ by
\begin{align*}
\LMiso(\ptt,\ptl,\ptm) &= \bigl( (1-\ptt) \ptl + \ptt \eta(\ptl), \ptm  \bigr), &
\Niso_{\ptt} &= \LMiso_{\ptt}^{-1}.
\end{align*}
Then $\LMiso$ (as well as $\Niso$) consists of automorphisms of the trivial fibration $\pr{\Iman}:\Iman\times\Mman\to\Iman$.

One easily checks that those isotopies stabilize $\pfunc$ near $\Xman$.
Indeed, notice that for $t\in(0,1]$
\begin{align*}
&\LMiso_{\ptt}\bigl([0,0.1]\times\Mman\bigr) \subset [0,0.1) \times\Mman, &
&\LMiso_{\ptt}\bigl([0.9,1]\times\Mman\bigr) \subset (0.9,1]\times\Mman, 
\end{align*}
whence $\LMiso_{\ptt}(\Jman\times\Mman) \subset \Int(\Jman\times\Mman)$. 
Moreover, similarly to Example~\ref{exmp:tub_nbh}, $\Niso_{\ptt} \circ \id_{\Iman\times\Mman} \circ \LMiso_{\ptt} = \id_{\Iman\times\Mman}$.

Hence due to Theorem~\ref{th:B_CsMN__B_rel_param} the inclusion~\eqref{equ:Diff_in_Loop} is a weak homotopy equivalence.
\end{proof}
\begin{corollary}\label{cor:loop_space:Diff_surf}
Let $\Mman$ be a connected compact surface distinct from $S^2$ and $\bR{P}^2$.
Then every connected component of $\DiffIMX{\rx}$, $1\leq\rx\leq\infty$, is weakly contractible (i.e.\! all of its homotopy groups are trivial) with respect to the topology $\Wr{\rx}$.
\end{corollary}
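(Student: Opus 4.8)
The plan is to transport the whole problem to the loop space of $\Diff^{\rx}(\Mman)$ and then appeal to the known homotopy type of surface diffeomorphism groups. First I would apply Theorem~\ref{th:loop_space} with $\sx=\rx$: since $\Mman$ is compact, it provides a weak homotopy equivalence $\EXP:\DiffIMX{\rx}\hookrightarrow\LoopDM$, where $\LoopDM$ is the loop space of $\Diff^{\rx}(\Mman)$ based at $\id_{\Mman}$. A weak homotopy equivalence induces a bijection on $\pi_0$ and isomorphisms on all higher homotopy groups at every basepoint, so $\EXP$ carries the connected components of $\DiffIMX{\rx}$ bijectively onto those of $\LoopDM$ and restricts to a weak homotopy equivalence on each. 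Hence it suffices to prove that every connected component of $\LoopDM$ is weakly contractible.

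Next I would exploit the group structure. Since $\Mman$ is compact, $\Diff^{\rx}(\Mman)$ is a topological group in the topology $\Wr{\rx}$, so $\LoopDM$ is again a topological group under pointwise multiplication of loops; left translation by a fixed loop is then a homeomorphism permuting its path components, and all of them are homeomorphic to the component of the constant loop. Thus it is enough to show that this constant-loop component is weakly contractible. The standard loop-space isomorphism gives, for $k\geq1$,
\[
	\pi_k(\LoopDM,\mathrm{const}) \ \cong \ \pi_{k+1}(\Diff^{\rx}(\Mman),\id_{\Mman}),
\]
so the claim reduces to the vanishing $\pi_n(\Diff^{\rx}(\Mman),\id_{\Mman})=0$ for all $n\geq2$.

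Finally I would establish this vanishing. Because $\Diff^{\rx}(\Mman)$ is open in the space $\CrmPairs{\rx}{\Mman}{\partial\Mman}{\Mman}{\partial\Mman}$ of $\Cr{\rx}$ self-maps of the pair $(\Mman,\partial\Mman)$, the maps-of-pairs form of our main result (Theorem~\ref{th:B_CsMN__B_rel_param}\ref{enum:th:B_CsMN__B_rel_param:pairs} with $\Lman$ a point) shows that the inclusion $\Diff^{\infty}(\Mman)\subset\Diff^{\rx}(\Mman)$ is a weak homotopy equivalence; hence $\pi_n(\Diff^{\rx}(\Mman))\cong\pi_n(\Diff^{\infty}(\Mman))$ for every $n$, and it suffices to treat the $\Cr{\infty}$ group. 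Here I would invoke the classical computations of Earle--Eells and Gramain of the homotopy type of the diffeomorphism group of a compact surface: for a connected compact surface $\Mman$ distinct from $S^2$ and $\bR{P}^2$, the identity component of $\Diff^{\infty}(\Mman)$ is aspherical (indeed contractible, except that for the torus it is homotopy equivalent to $T^2$ and for a few low-complexity surfaces to $S^1$). In all these cases $\pi_n(\Diff^{\infty}(\Mman))=0$ for $n\geq2$, which is exactly what is needed.

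The only genuinely non-formal ingredient is this last appeal to the homotopy type of surface diffeomorphism groups, and it is precisely there that the hypothesis $\Mman\neq S^2,\bR{P}^2$ is indispensable: for these two surfaces the identity component is homotopy equivalent to $SO(3)$, whose $\pi_3$ is nonzero, so the conclusion genuinely fails. Everything else is a formal consequence of Theorem~\ref{th:loop_space}, the topological-group structure of $\LoopDM$, and the $\Cr{\infty}$-approximation equivalence $\Diff^{\infty}(\Mman)\subset\Diff^{\rx}(\Mman)$ furnished by Theorem~\ref{th:B_CsMN__B_rel_param}.
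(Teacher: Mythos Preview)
Your proof is correct and follows essentially the same route as the paper's: pass through $\LoopDM$ via Theorem~\ref{th:loop_space}, use the loop-space shift $\pi_k(\LoopDM)\cong\pi_{k+1}(\Diff^{\rx}(\Mman))$, reduce to $\Diff^{\infty}(\Mman)$ by the $\Cr{\infty}$-approximation equivalence, and finish with the Earle--Eells/Gramain asphericity. Two small points where you are in fact more careful than the paper: you explicitly invoke the topological-group structure to transport the computation at the identity to \emph{every} component (the paper leaves this implicit), and you cite Theorem~\ref{th:B_CsMN__B_rel_param}\ref{enum:th:B_CsMN__B_rel_param:pairs} rather than Lemma~\ref{lm:open_sets} for the inclusion $\Diff^{\infty}(\Mman)\subset\Diff^{\rx}(\Mman)$, which is the correct reference when $\partial\Mman\neq\varnothing$ since $\Diff^{\rx}(\Mman)$ is then open only in $\CrMdMMdM{\rx}$, not in $\Crm{\rx}{\Mman}{\Mman}$.
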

\begin{proof}
Notice that for $k\geq1$ we have the following isomorphisms of homotopy groups:
\begin{align*}
\pi_{k}\bigl( \DiffIMX{\rx}, \id_{\Iman\times\Mman} \bigr) & \cong \pi_k \bigl( \LoopDM, c_{\id_{\Mman}} \bigr) \cong \\ 
&\cong \pi_{k+1}\bigl(\Diff^{\rx}(\Mman), \id_{\Mman}) \cong \pi_{k+1}\bigl(\Diff^{\infty}(\Mman), \id_{\Mman}),
\end{align*}
%\begin{align*}
%	\pi_{k}\DiffIMX{\rx} \cong \pi_k\LoopDM \cong \pi_{k+1}\Diff^{\rx}(\Mman) \cong \pi_{k+1}\Diff^{\infty}(\Mman),
%\end{align*}
%where in the second base points are taken at the corresponding identity maps
where $c_{\id_{\Mman}}:\Iman\to\Diff^{\rx}(\Mman)$ the constant map into the indentity diffeomorphism $\id_{\Mman}$ of $\Mman$,
the first isomorphism is established in Theorem~\ref{th:loop_space}, the second one is well known, e.g.~\cite[Proposition~4.3 \& Corollary~4.8]{Switzer:AT:2002}, and the third one follows from Lemma~\ref{lm:open_sets} since $\Diff^{\rx}(\Mman)$ and $\Diff^{\infty}(\Mman)$ are weakly homotopy equivalent.
But if $\Mman$ is neither $S^2$ nor $\bR{P}^2$, then all path components of $\Diff^{\infty}(\Mman)$ are homotopy equivalent to one of the following aspherical spaces: point, $S^1$ or $S^1\times S^1$, e.g.~\cite{Smale:ProcAMS:1959, EarleEells:BAMS:1967, EarleEells:DG:1970, Gramain:ASENS:1973}.
Hence the above groups vanish.
\end{proof}

Another application of Theorem~\ref{th:loop_space} is based on Example~\ref{exmp:tub_nbh}:
\begin{corollary}\label{cor:tubular_nbh_whe_anyM}
Let $1\leq\rx<\sx\leq\infty$, $\Bman$ be a closed submanifold of a compact manifold $\Mman$, $\Xman$ a tubular neighbourhood of $\Bman$, and $\Diff_{\Xman}^{\rx}(\Mman)$ the group of $\Cr{\rx}$ diffeomorphisms of $\Mman$ fixed on $\Xman$.
For example $\Bman$ can be non-empty collection of boundary components of $\Mman$ and $\Xman$ a collar of $\Bman$.
Then, due to Example~\ref{exmp:tub_nbh}, the inclusion $\Diff_{\Xman}^{\sx}(\Mman) \subset \Diff_{\Xman}^{\rx}(\Mman)$ is a weak homotopy equivalence.
\qed
\end{corollary}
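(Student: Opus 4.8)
The plan is to recognize the inclusion $\Diff_{\Xman}^{\sx}(\Mman)\subset\Diff_{\Xman}^{\rx}(\Mman)$ as a special case of the relative statement Theorem~\ref{th:B_CsMN__B}\ref{enum:th:B_CsMN__B:rel}, with target $\Nman=\Mman$, distinguished map $\pfunc=\id_{\Mman}$, and closed set the tubular neighbourhood $\Xman$. First I would set $\sBB:=\Diff^{\rx}(\Mman)$. By Lemma~\ref{lm:open_sets}, compactness of $\Mman$ guarantees that $\sBB$ is open in $\Crm{\rx}{\Mman}{\Mman}$ when $\Mman$ is closed, so $\sBB$ is an admissible open set for Theorem~\ref{th:B_CsMN__B}. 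With $\pfunc=\id_{\Mman}$ the associated subspaces are exactly $\BfX=\{\gfunc\in\Diff^{\rx}(\Mman)\mid \gfunc=\id_{\Mman}\text{ on }\Xman\}=\Diff_{\Xman}^{\rx}(\Mman)$ and $\AfX=\BfX\cap\Crm{\sx}{\Mman}{\Mman}=\Diff_{\Xman}^{\sx}(\Mman)$, so the inclusion in the corollary is literally $\incl:\AfX\subset\BfX$.

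It then remains only to produce a pair of $\Cr{\sx}$ isotopies stabilizing $\id_{\Mman}$ near $\Xman$ in the sense of Definition~\ref{def:stab_f_near_X_no_L}, and this is precisely what Example~\ref{exmp:tub_nbh} supplies. Concretely, I would use the disk-bundle retraction $\rho:\Xman\to\Bman$ to build a $\Cinfty$ ambient isotopy $\LMiso:[0,1]\times\Mman\to\Mman$ with $\LMiso_0=\id_{\Mman}$ that contracts the fibres radially and hence pushes $\Xman$ strictly inside itself, i.e.\ $\LMiso_{\ptt}(\Xman)\subset\Int{\Xman}$ for every $\ptt\in(0,1]$; setting $\Niso_{\ptt}:=\LMiso_{\ptt}^{-1}$ gives $\Niso_{\ptt}\circ\id_{\Mman}\circ\LMiso_{\ptt}=\id_{\Mman}$ on all of $\Mman$. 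Conditions~\ref{enum:def:stab_f_near_X_no_L:id}, \ref{enum:def:stab_f_near_X_no_L:shrinks} and~\ref{enum:def:stab_f_near_X_no_L:stab} are then immediate, so Theorem~\ref{th:B_CsMN__B}\ref{enum:th:B_CsMN__B:rel} applies and yields that $\incl:\Diff_{\Xman}^{\sx}(\Mman)\subset\Diff_{\Xman}^{\rx}(\Mman)$ is a weak homotopy equivalence.

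Two points require a little care. When $\Mman$ has non-empty boundary (as in the collar example), $\Diff^{\rx}(\Mman)$ is open not in $\Crm{\rx}{\Mman}{\Mman}$ but in the space of pairs $\CrmPairs{\rx}{\Mman}{\partial\Mman}{\Mman}{\partial\Mman}$; in that situation I would invoke the maps-of-pairs version Theorem~\ref{th:B_CsMN__B_rel_param}\ref{enum:th:B_CsMN__B_rel_param:pairs} (with $\Lman$ a point) in place of Theorem~\ref{th:B_CsMN__B}, and I must arrange that each $\LMiso_{\ptt}$ preserves $\partial\Mman$ --- automatic when $\Bman$ is a union of boundary components and $\Xman$ a collar, since the radial contraction then slides the collar inward while keeping $\partial\Mman$ invariant. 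The genuine, if modest, technical obstacle is therefore the explicit construction of the shrinking isotopy $\LMiso$: one must extend the fibrewise radial contraction defined on $\Xman$ to a globally defined isotopy of $\Mman$ by a cutoff (equivalently, flow an inward-pointing, compactly supported vector field) so that every $\LMiso_{\ptt}$ is a diffeomorphism of all of $\Mman$ and not merely of $\Xman$. Once this isotopy is in hand, the rest of the argument is pure bookkeeping and a direct appeal to the already-established theorems.
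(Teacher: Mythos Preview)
Your proposal is correct and follows exactly the approach the paper intends: the paper's ``proof'' is just the phrase ``due to Example~\ref{exmp:tub_nbh}'' together with \qed, and you have faithfully unpacked what that means --- identify $\sBB=\Diff^{\rx}(\Mman)$, $\pfunc=\id_{\Mman}$, so that $\BfX=\Diff_{\Xman}^{\rx}(\Mman)$ and $\AfX=\Diff_{\Xman}^{\sx}(\Mman)$, invoke Example~\ref{exmp:tub_nbh} for the stabilizing pair $(\LMiso,\Niso)$ with $\Niso_{\ptt}=\LMiso_{\ptt}^{-1}$, and apply Theorem~\ref{th:B_CsMN__B}\ref{enum:th:B_CsMN__B:rel}. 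Your additional remark that the case $\partial\Mman\neq\varnothing$ requires the pairs version (Theorem~\ref{th:B_CsMN__B_rel_param}\ref{enum:th:B_CsMN__B_rel_param:pairs} with $\Lman$ a point, equivalently Theorem~\ref{th:B_CsMN__B_rel_param_pairs}) is a genuine detail the paper leaves implicit, and your observation about extending the fibrewise contraction to a global isotopy by a cutoff is exactly the standard construction underlying Example~\ref{exmp:tub_nbh}.
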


\subsection*{Structure of the paper}
In Section~\ref{sect:weak_homot_equiv} we give a characterization of weak homotopy equivalences which are injective maps (Lemma~\ref{lm:char_whe}).
Section~\ref{sect:preliminaries} contains a list of basic properties of compact open topologies (Lemma~\ref{lm:cotop}), and the proof of exponential law for non-Hausdorff spaces (Lemma~\ref{lm:exp_law}).
In Section~\ref{sect:whitney_topologies} we discuss Whitney topologies on the spaces of smooth maps between manifolds.

In Section~\ref{sect:maps_from_prods} we consider the set $\Crm{0,\rx}{\Lman\times\Mman}{\Nman}$ of adjoint maps to continuous maps from $\Cont{\Lman}{\Crm{\rx}{\Mman}{\Nman}}$, and similarly to~\cite{Amann:ODE:1990, Glockner:JFA:2002, AlzaareerSchmeding:EM:2015}, endow the space $\Crm{0,\rx}{\Lman\times\Mman}{\Nman}$ with a topology (denoted $\Wr{0,\rx}$) induced from $\Cont{\Lman}{\Crm{\rx}{\Mman}{\Nman}}$.
We prove continuity of the compositions for maps from $\Crm{0,\rx}{\Lman\times\Mman}{\Nman}$ (Lemmas~\ref{lm:C0r_forward_composition} and~\ref{lm:C0r_backward_composition}), an exponential law (Lemma~\ref{lm:exp_law_C0r}), and also formulate analogues of smooth approximation results.

In Section~\ref{sect:diff_approx} we establish the main approximation Lemma~\ref{lm:approx_C0r}, and in Section~\ref{sect:proof:th:B_CsMN__B_rel_param} prove statements~\ref{enum:th:B_CsMN__B_rel_param:abs} and~\ref{enum:th:B_CsMN__B_rel_param:rel} of Theorem~\ref{th:B_CsMN__B_rel_param}.
Finally, in Section~\ref{sect:maps_of_pairs} we show how to extend the obtained results to opens subsets of the spaces of preserving boundaries maps $\CrmPairs{\rx}{\Mman}{\partial\Mman}{\Nman}{\partial\Nman}$, e.g. to groups of diffeomorphisms of manifolds with boundary (Theorem~\ref{th:B_CsMN__B_rel_param_pairs}).
In particular, this will prove statement~\ref{enum:th:B_CsMN__B_rel_param:pairs} of Theorem~\ref{th:B_CsMN__B_rel_param}.

\section{Weak homotopy equivalences}\label{sect:weak_homot_equiv}
In this section we present a simple characterization of injective continuous maps being weak homotopy equivalences.
Such a question is discussed in~\cite[\S6]{Glockner:0812.4713:2008} for a special type of spaces, see Remark~\ref{rem:comparison_with_Glockner}.

Let $\Disk{k+1}$ be the unit $k$-dimensional disk in $\bR^{k+1}$, $S^k = \partial \Disk{k+1}$ the $k$-dimensional sphere, and $\bpt=(1,0,\ldots,0)\in S^k$.
Let also $(\sBB,\tau)$ be a topological space, $\sAA$ a subset of $\sBB$ endowed with the induced topology $\tau_{\sAA}$, $\dif\in\sAA$, and $\incl:\sAA\subset\sBB$ the inclusion map.
Then it is easy to see that the following conditions are equivalent:
\begin{enumerate}[leftmargin=*, label={\rm(\arabic*)}]
\item
for each $k\geq0$ the induced homomorphism $\incl_k:\pi_k(\sAA,\dif) \to \pi_k(\sBB,\dif)$ of the corresponding homotopy groups is an isomorphism (bijection of sets for $k=0$);

\item
for each $k\geq1$ the corresponding relative homotopy sets are trivial, i.e. $\pi_k(\sBB,\sAA,\dif) = 0$;

\item
every continuous map of triples $\beta:(\Disk{k+1},S^k,\bpt) \to (\sBB,\sAA,\dif)$ is homotopic as a map of \myemph{triples} to a map into $\sAA$;

\item
every continuous map of triples $\beta:(\Disk{k+1},S^k,\bpt) \to (\sBB,\sAA,\dif)$ is homotopic as a map of \myemph{pairs} $(\Disk{k+1},S^k) \to (\sBB,\sAA)$ to a map into $\sAA$.
\end{enumerate}
If either of these conditions holds, then $\incl:\sAA\subset\sBB$ is called a \myemph{weak homotopy equivalences}.

\medskip

Suppose now that $\sAA$ is endowed with some other topology $\sigma$.
Then $\incl:(\sAA,\sigma)\subset(\sBB,\tau)$ is still continuous if and only if $\sigma$ is stronger than $\tau_{\sAA}$.

Thus suppose that $\incl$ is continuous.
Then a continuous map of pairs
\[\beta:(\Disk{k+1}, S^{k}) \to (\sBB,\sAA)\]
will be called \myemph{$\sigma$-admissible}%
\footnote{This property is very close to the notion of a \myemph{compactly retractable} topological space defined in~\cite[Definition~3.2]{Glockner:0812.4713:2008}.}%
, if the restriction $\beta|_{S^{k}}:S^{k}\to\sAA$ is still continuous into the topology $\sigma$ of $\sAA$.
More formally, there should exist a continuous map $\alpha:S^k \to \sAA$ such that $\beta|_{S^{k}} = \incl\circ\alpha$, and so we get the following commutative diagram of continuous maps:
\[
\xymatrix{
(\sAA,\sigma) \ar[r]^-{\incl} & (\sBB,\tau) \\
\ S^k \ \ar@{^(->}[r] \ar[u]^-{\alpha} & \ \Disk{k+1} \ \ar[u]^-{\beta}
}
\]
A homotopy of pairs $H:[0,1]\times \Disk{k+1} \to \sBB$ will be called \myemph{$\sigma$-admissible} whenever $H([0,1]\times S^k) \subset \sAA$, and the restriction $H: [0,1]\times S^k \to \sAA$ is continuous into the topology $\sigma$.
In particular, each map $H_t$ is $\sigma$-admissible.

The following lemma is a simple exercise in homotopy groups.
\begin{lemma}\label{lm:char_whe}
Let $(\sAA,\sigma)$ and $(\sBB,\tau)$ be path connected topological spaces and $\incl:\sAA \to \sBB$ be an \myemph{injective continuous} map,
so we can identify $\sAA$ with a subset of $\sBB$, but endow it with a stronger than $\tau_{\sAA}$ topology $\sigma$.
Let also $\dif\in\sAA$.
Then the following conditions are equivalent:
\begin{enumerate}[leftmargin=*, label={\rm(\arabic*)}, itemsep=0.4ex]
\item\label{enum:lm:char_whe:whe}
$\incl:\sAA \to \sBB$ is a weak homotopy equivalence;

\item\label{enum:lm:char_whe:adm_maps_A}
every $\sigma$-admissible map $\beta:(\Disk{k+1}, S^{k}, \bpt) \to (\sBB,\sAA,\dif)$ is homotopic relatively to $S^{k}$ to a map into $\sAA$;

\item\label{enum:lm:char_whe:adm_maps}
every $\sigma$-admissible map $\beta:(\Disk{k+1}, S^{k}, \bpt) \to (\sBB,\sAA,\dif)$ is homotopic by a $\sigma$-admissible homotopy of \myemph{triples} $(\Disk{k+1}, S^{k}, \bpt) \to (\sBB,\sAA,\dif)$ to a map into $\sAA$;

\item\label{enum:lm:char_whe:adm_maps_pairs}
every $\sigma$-admissible map $\beta:(\Disk{k+1}, S^{k}, \bpt) \to (\sBB,\sAA,\dif)$ is homotopic by a $\sigma$-admissible homotopy of \myemph{pairs} $(\Disk{k+1}, S^{k}) \to (\sBB,\sAA)$ to a map into $\sAA$.
\end{enumerate}
\end{lemma}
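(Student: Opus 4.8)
The plan is to rewrite the assertion ``$\incl$ is a weak homotopy equivalence'' as the two statements that the induced maps $\incl_k\colon\pi_k(\sAA,\sigma,\dif)\to\pi_k(\sBB,\tau,\dif)$ are injective and surjective for every $k$, and to match these with conditions~(2)--(4). The crucial bookkeeping device throughout is that $\sigma$-admissibility forces all \emph{boundary} data (restrictions to $S^k$, and to $[0,1]\times S^k$ for homotopies) to be continuous into the finer topology $\sigma$, so that they represent honest elements of, and honest homotopies in, $\pi_k(\sAA,\sigma)$, even though the disk interiors and the ambient homotopies only live in $(\sBB,\tau)$. Two of the implications are immediate: a homotopy relative to $S^k$ restricts on $S^k$ to the \emph{constant} (hence $\sigma$-continuous) family $\beta|_{S^k}$ and fixes $\bpt$, so it is a $\sigma$-admissible homotopy of triples, and a homotopy of triples is in particular one of pairs. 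This yields $(2)\Rightarrow(3)\Rightarrow(4)$, and it remains to establish $(1)\Leftrightarrow(2)$ and then to close the cycle by returning from $(4)$.

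The core is $(1)\Leftrightarrow(2)$. For $(2)\Rightarrow(1)$, \emph{injectivity} comes from the observation that a $\sigma$-continuous $\alpha\colon(S^k,\bpt)\to(\sAA,\sigma,\dif)$ with $\incl\circ\alpha$ null in $\sBB$ is precisely the boundary of an extension $\beta\colon\Disk{k+1}\to\sBB$, which is $\sigma$-admissible; condition~(2) compresses $\beta$ rel $S^k$ to some $\incl\circ\bar\alpha$ with $\bar\alpha\colon\Disk{k+1}\to(\sAA,\sigma)$ $\sigma$-continuous and $\bar\alpha|_{S^k}=\alpha$, so $[\alpha]=0$ in $\pi_k(\sAA,\sigma)$. \emph{Surjectivity} uses a collapse trick: a given $\gamma\colon(S^k,\bpt)\to(\sBB,\dif)$ is viewed, via $S^k=\Disk{k}/S^{k-1}$, as a map $\beta_\gamma\colon(\Disk{k},S^{k-1},\bpt)\to(\sBB,\sAA,\dif)$ whose restriction to $S^{k-1}$ is the constant $\dif$; being constant this restriction is $\sigma$-continuous, so $\beta_\gamma$ is $\sigma$-admissible, condition~(2) compresses it rel $S^{k-1}$ into $(\sAA,\sigma)$, and re-collapsing produces a $\sigma$-continuous $\alpha\colon S^k\to\sAA$ with $\incl\circ\alpha\simeq\gamma$. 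Conversely, for $(1)\Rightarrow(2)$ one takes a $\sigma$-admissible $\beta$ with $\alpha=\beta|_{S^k}$: injectivity of $\incl_k$ gives a $\sigma$-continuous disk extension $\bar\alpha$ of $\alpha$, the two extensions $\beta$ and $\incl\circ\bar\alpha$ of $\incl\circ\alpha$ glue along $S^k$ to a class in $\pi_{k+1}(\sBB,\dif)$, and surjectivity of $\incl_{k+1}$ writes this class as $\incl_{k+1}(\theta)$ for some $\theta\in\pi_{k+1}(\sAA,\sigma)$; altering $\bar\alpha$ by a connected sum in the interior with a $\sigma$-continuous representative of $\theta$ produces $\bar\alpha'$ with $\beta\simeq\incl\circ\bar\alpha'$ rel $S^k$, which is exactly~(2).

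To close the cycle I would prove $(4)\Rightarrow(2)$. Given a $\sigma$-admissible $\beta$, condition~(4) supplies a $\sigma$-admissible homotopy of \emph{pairs} into $(\sAA,\sigma)$; its restriction to $[0,1]\times S^k$ is a $\sigma$-continuous \emph{free} homotopy of the boundary, and one must normalize this to a genuine compression rel $S^k$. The plan is to use the homotopy extension property of the cofibration $S^k\hookrightarrow\Disk{k+1}$ to reverse the boundary track (which lives $\sigma$-continuously in $\sAA$) and thereby replace the free pair-homotopy by one that is fixed on $S^k$, passing from free to based homotopies via path-connectedness of $(\sAA,\sigma)$ so that the relevant class lands in the $\pi_1$-orbit of $0$; this is the same manipulation that yields the equivalence of the unprimed conditions preceding the lemma, now carried out while keeping the boundary data $\sigma$-continuous. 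The main obstacle is precisely this interplay of the two topologies: every compression and homotopy is only available in the ambient $(\sBB,\tau)$, yet the conclusions must be read off \emph{$\sigma$-continuously} along the boundary spheres, and each extension produced must factor through $(\sAA,\sigma)$ rather than merely have image in $\sAA$. Keeping the triple/pair/rel-$S^k$ base-point data straight, and verifying that the compression-lemma normalization preserves $\sigma$-continuity of the boundary track, is the technical heart of the proof.
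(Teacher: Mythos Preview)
Your proposal is correct and largely parallels the paper's argument: the implications $(2)\Rightarrow(3)\Rightarrow(4)$ and $(2)\Rightarrow(1)$ are handled identically, and your $(1)\Rightarrow(2)$ via the ``difference class'' $[\beta]\cup[-\incl\circ\bar\alpha]\in\pi_{k+1}(\sBB)$ plus interior connected sum is the standard dual formulation of the paper's route (which instead first null-homotopes $\alpha$ inside $(\sAA,\sigma)$ using injectivity of $\incl_k$, pushes this out to $\Disk{k+1}$ by HEP, and only then invokes surjectivity of $\incl_{k+1}$ on the resulting sphere class).

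The one genuine difference is $(4)\Rightarrow(2)$. The paper does this in a single stroke: it writes down an explicit continuous map $\eta\colon[0,1]\times\Disk{k+1}\to[0,1]\times\Disk{k+1}$ fixed on $0\times\Disk{k+1}$, collapsing each $[0,1]\times\{z\}$ for $z\in S^k$ to $(0,z)$, and sending $1\times\Disk{k+1}$ into $(1\times\Disk{k+1})\cup([0,1]\times S^k)$; then $B\circ\eta$ is already a homotopy rel $S^k$ from $\beta$ to a map whose image lies in the $\sigma$-continuous part of $B$. Your plan (reverse the boundary track in $(\sAA,\sigma)$ via HEP, concatenate, then straighten the resulting $p*\bar p$ loop on $S^k$ by a second HEP application) also works but is longer. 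Note that your remark about ``path-connectedness of $(\sAA,\sigma)$'' and ``the $\pi_1$-orbit of $0$'' is unnecessary here: the loop $p*\bar p$ is \emph{canonically} null-homotopic rel endpoints, so no basepoint or $\pi_1$ bookkeeping is needed at that step.
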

\begin{proof}
\ref{enum:lm:char_whe:whe}$\Rightarrow$\ref{enum:lm:char_whe:adm_maps}
Let $\beta:(\Disk{k+1}, S^{k}, \bpt) \to (\sBB,\sAA,\dif)$ be a $\sigma$-admissible map, and $\alpha:S^{k} \to (\sAA,\sigma)$ the map such that $\beta|_{S^k} = \incl\circ\alpha$.
In particular, we see that $\incl_{k}\bigl([\alpha]\bigr)=0$ in $\pi_k(\sBB,\dif)$.
Since $\incl_{k}$ is an isomorphism, $[\alpha]=0$ in $\pi_k(\sAA,\dif)$, i.e. there exists a homotopy $\AFunc:[0,1]\times S^k \to \sAA$ such that $\AFunc_0=\alpha$, $\AFunc_{\ptt}(\bpt)=\dif$ for all $\ptt\in[0,1]$, and $\AFunc_1(S^k)=\{\dif\}$.
Since the pair $(\Disk{k+1},S^k)$ satisfies homotopy extension axiom, the homotopy $\incl\circ \AFunc:[0,1]\times S^k \to \sBB$ extends to some homotopy $\BFunc:[0,1]\times \Disk{k+1} \to \sBB$ such that $\BFunc_0=\beta$.

The assumption $\BFunc|_{[0,1]\times S^{k}} = \incl\circ \AFunc$ means that the homotopy $\BFunc$ consists of $\sigma$-admissible maps.
Therefore we can replace $\beta$ with $\BFunc_1$ and assume that $\beta(S^k) = \BFunc_1(S^k)= \{\dif\}$.

Thus $\beta$ is a map of pairs $\beta:(\Disk{k+1},S^k) \to (\sBB, \{\dif\})$, and therefore it represents some element $[\beta]$ of $\pi_{k+1}(\sBB,\dif)$.
Since $\incl_{k+1}$ is an isomorphism as well, $\beta$ is homotopic relatively $S^k$ (and therefore by a $\sigma$-admissible homotopy) to a map into $\sAA$.

\ref{enum:lm:char_whe:adm_maps_A}$\Rightarrow$\ref{enum:lm:char_whe:adm_maps}
If $\beta:(\Disk{k+1}, S^{k}, \bpt) \to (\sBB,\sAA,\dif)$ is a $\sigma$-admissible map, then any homotopy of $\beta$ relatively to $S^{k}$ is $\sigma$-admissible.

\ref{enum:lm:char_whe:adm_maps}$\Rightarrow$\ref{enum:lm:char_whe:adm_maps_pairs}
This implication follows from the observation that every ($\sigma$-admissible) homotopy of triples $(\Disk{k+1}, S^{k}, \bpt) \to (\sBB,\sAA,\dif)$ is also a ($\sigma$-admissible) homotopy of pairs $(\Disk{k+1}, S^{k}) \to (\sBB,\sAA)$.

\ref{enum:lm:char_whe:adm_maps_pairs}$\Rightarrow$\ref{enum:lm:char_whe:adm_maps_A}
Let $\BFunc:[0,1]\times \Disk{k+1} \to \sBB$ be a $\sigma$-admissible homotopy such that $\beta = \BFunc_0$ and $\BFunc_1(\Disk{k+1}) \subset \sAA$.
In particular, $\BFunc\bigl([0,1]\times S^{k}\bigr) \subset \sAA$.
Fix any continuous map $\eta: [0,1]\times \Disk{k+1} \to [0,1]\times \Disk{k+1}$ having the following properties:
\begin{enumerate}[label={\rm(\arabic*)}, itemsep=0.5ex]
\item
$\eta$ is fixed on $0\times \Disk{k+1}$;

\item
$\eta([0,1]\times\pts)=(0,\pts)$ for all $\pts\in S^k$;

\item
$\eta(1\times \Disk{k+1}) \subset \bigl(1\times \Disk{k+1}\bigr) \cup \bigl([0,1]\times S^{k}\bigr)$;

\end{enumerate}
Then $\BFunc\circ\eta:[0,1]\times \Disk{k+1} \to \sBB$ is a $\sigma$-admissible homotopy relatively $S^k$ between $\BFunc_0$ and a map into $\sAA$.

\ref{enum:lm:char_whe:adm_maps_A}$\Rightarrow$\ref{enum:lm:char_whe:whe}
\myemph{Surjectivity of $\incl_{k}$.}
Let $\beta:(\Disk{k},S^{k-1})\to  (\sBB, \{\dif\})$ be a continuous map.
We have to show that $\beta$ is homotopic as a map of pairs to a map $\incl\circ\alpha$ for some map $\alpha:(\Disk{k},S^{k-1})\to  (\sAA, \{\dif\})$.
Evidently, $\beta$ can also be regarded as a map of triples $\beta: (\Disk{k},S^{k-1},\bpt)\to(\sBB,\sAA,\{\dif\})$.
Moreover, as $\beta|_{S^k}: S^{k-1} \to \sAA$ is constant, it is also continuous into the topology $\sigma$, whence $\beta$ is a $\sigma$-admissible map.
Hence, by~\ref{enum:lm:char_whe:adm_maps_A}, $\beta$ is homotopic relatively to $S^k$ (i.e.\! sending $S^k$ to a point $\dif$) to a map $\incl\circ\alpha$ for some continuous map $\alpha:(\Disk{k},S^{k-1})\to (\sBB,\{\dif\})$.
Then $\incl_k[\alpha] = \beta$.

\myemph{Injectivity of $\incl_{k}$.}
Let $\alpha:(S^{k},\bpt)\to (\sAA, \{\dif\})$ be a continuous map such that $\incl\circ\alpha:(S^{k},\bpt)\to (\sBB, \{\dif\})$ extends to a continuous map $\beta:\Disk{k+1}\to\sBB$.
We have to show that then $\alpha$ itself extends to a continuous map $\Disk{k+1}\to\sAA$.

Notice that the assumption on $\alpha$ and $\beta$ means that $\beta$ is a $\sigma$-admissible map.
Therefore by~\ref{enum:lm:char_whe:adm_maps_A}, $\beta$ is homotopic relatively to $S^{k}$ to a map $\incl\circ\beta'$ for some continuous map $\beta':\Disk{k} \to \sAA$.
But $\beta'|_{S^k}=\beta|_{S^k}=\alpha$, which means that $\beta'$ is a desired extension of $\alpha$.
\end{proof}

\section{Compact open topologies}\label{sect:preliminaries}

\subsection*{Locally compact spaces}
We will say that a (not necessary Hausdorff) topological space $\Mman$ is \myemph{locally compact} whenever
for each $x\in\Mman$ and an open neighbourhood $\Uman$ of $x$ there exists a compact subset $\Bman\subset\Mman$ such that $x\in\Int{\Bman} \subset\Bman \subset \Uman$.

\begin{lemma}\label{lm:loc_comp_cover}
Let $\Kman$ be a compact subset of a locally compact topological space $\Mman$, and $\{\Uman_i\}_{i=1}^{n}$ be a finite open cover of $\Kman$.
Then there exists finitely many compact sets $\Kman_1,\ldots,\Kman_n$ such that $\Kman_i \subset \Uman_i$ for each $i=1,\ldots,n$ and $\Kman\subset\mathop{\cup}\limits_{i=1}^{n}\Kman_i$.
\end{lemma}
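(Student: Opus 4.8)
The plan is to reduce the statement to a single application of compactness of $\Kman$, using the non-Hausdorff notion of local compactness provided above, which furnishes a compact neighbourhood sitting inside any prescribed open set.

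First I would assign to every point an index. Since $\{\Uman_i\}_{i=1}^{n}$ covers $\Kman$, each $x\in\Kman$ lies in some $\Uman_{i(x)}$. Applying local compactness to the pair $(x,\Uman_{i(x)})$, I obtain a compact set $\Bman_x$ with $x\in\Int{\Bman_x}\subset\Bman_x\subset\Uman_{i(x)}$. The interiors $\{\Int{\Bman_x}\}_{x\in\Kman}$ then form an open cover of $\Kman$, so by compactness there are finitely many points $x_1,\ldots,x_m$ with $\Kman\subset\bigcup_{j=1}^{m}\Int{\Bman_{x_j}}\subset\bigcup_{j=1}^{m}\Bman_{x_j}$.

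Next I would regroup the finitely many compact sets $\Bman_{x_1},\ldots,\Bman_{x_m}$ according to their assigned index. For each $i\in\{1,\ldots,n\}$ set $\Kman_i:=\bigcup\{\Bman_{x_j}\mid i(x_j)=i\}$, understood as $\varnothing$ when no $x_j$ carries the index $i$. Being a finite union of compact sets, each $\Kman_i$ is compact, and since $\Bman_{x_j}\subset\Uman_{i(x_j)}=\Uman_i$ whenever $i(x_j)=i$, we obtain $\Kman_i\subset\Uman_i$. Finally $\bigcup_{i=1}^{n}\Kman_i=\bigcup_{j=1}^{m}\Bman_{x_j}\supset\Kman$, which is exactly the required conclusion.

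There is no serious obstacle here: this is a routine compactness covering argument. The only point requiring care is that, in the non-Hausdorff setting, compact subsets need not be closed; however closedness is never used, and the special definition of local compactness is precisely tailored so that the compact neighbourhood $\Bman_x$ can be taken inside the chosen $\Uman_{i(x)}$. This fitting property is exactly what keeps each $\Kman_i$ inside the corresponding $\Uman_i$ after the index-regrouping step.
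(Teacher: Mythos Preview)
Your proof is correct and follows essentially the same approach as the paper: assign an index to each point, use local compactness to find a compact neighbourhood inside the chosen $\Uman_{i(x)}$, extract a finite subcover of the interiors, and regroup the resulting compact sets by index. The paper's proof is line-for-line the same argument with only notational differences.
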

\begin{proof}
For each $x\in\Kman$ fix an element $\Uman_{i_x}$ containing $x$.
By local compactness of $\Mman$ there exists a compact subset $\Bman_x$ such that $x\in\Int{\Bman_x} \subset\Bman_x \subset \Uman_{i_x}$.
Then $\{\Int{\Bman_x}\}_{x\in\Kman}$ is an open cover of $\Kman$, whence there are finitely many points $x_1,\ldots,x_k$ such that $\Kman\subset \mathop{\cup}\limits_{j=1}^{k}\Int{\Bman_{x_j}}$.
Let $\Kman_i = \mathop{\cup}\limits_{i_{x_j} = i} \Bman_{x_j}$ be the union of those $\Bman_{x_j}$ which are contained in $\Uman_{i}$.
Then $\Kman_i$ is compact, $\Kman_i \subset \Uman_i$, and $\Kman\subset\mathop{\cup}\limits_{i=1}^{n}\Kman_i$.
\end{proof}

\subsection*{Compact open topologies}
Recall, e.g.~\cite{Fox:BAMS:1945}, that for any topological spaces $\Mman$ and $\Nman$ the \myemph{compact open} topology on the space $\Cont{\Mman}{\Nman}$ of all continuous maps $\Mman\to\Nman$ is the topology generated as a subbase by sets of the form:
\[
\WNBH{0}{\Xman}{\Vman} = \{ \dif\in\Cont{\Mman}{\Nman} \mid \dif(\Xman)\subset\Vman \},
\]
where $\Xman\subset\Mman$ runs over all compact subsets of $\Mman$ and $\Vman\subset\Nman$ runs over all open subsets of $\Nman$.

In what follows we will always assume that the space $\Cont{\Mman}{\Nman}$ is endowed with the compact open topology.

To make the paper as self contained as possible we will collect in the following lemma several simple properties of compact open topologies.
Most of them are rather trivial and can be found in many sources.
Emphasize, that locally compact spaces in this lemma are not required to be Hausdorff.

\begin{lemma}\label{lm:cotop}
For three topological spaces $\Lman, \Mman,\Nman$ the following statements hold true.
\begin{enumerate}[leftmargin=*, label={\rm(\arabic*)}]

\item\label{enum:lm:cotop:subbase_prop}
\myemph{Properties of subbase.}
\begin{enumerate}[leftmargin=*, label={\rm(\alph*)}, ref={\rm(1\alph*)}]
\item\label{enum:lm:cotop::subbase_prop:a}
$\WNBH{0}{\Kman}{\Vman_1} \cap \WNBH{0}{\Kman}{\Vman_2} = \WNBH{0}{\Kman}{\Vman_1\cap\Vman_2}$,
$\mathop{\cup}\limits_{i\in\Lambda}\WNBH{0}{\Kman}{\Vman_i} = \WNBH{0}{\Kman}{\mathop{\cup}\limits_{i\in\Lambda}\Vman_i}$
for any compact $\Kman\subset\Mman$, open $\Vman_1, \Vman_2 \subset \Nman$ and a family $\{\Vman_i\}_{i\in\Lambda}$ of open subsets of $\Nman$.

\item\label{enum:lm:cotop::subbase_prop:open_subset}
For each open $\Vman\subset\Nman$ the space $\Cont{\Mman}{\Vman}$ is an open subset of $\Cont{\Mman}{\Nman}$.

\item\label{enum:lm:cotop::subbase_prop:closure_KU}
Let $\Uman\subset\Vman$ be open subsets of $\Mman$ such that $\overline{\Uman}\subset\Vman$.
Then for any compact $\Kman\subset\Lman$ we have that
$\overline{[\Kman, \Uman]} \subset [\Kman,\Vman]$, {\rm(\cite[\S46, Excercise~6]{Munkres:Top:2000})}.

\end{enumerate}

\item\label{enum:lm:cotop::separate}
\myemph{Separation axioms.}
If $\Mman$ is a $T_i$-space for some $i=0,1,2,3$, then so is $\Cont{\Lman}{\Mman}$.

%%%%%%%%%%%%%%%%%%%%%
\item\label{enum:lm:cotop::composition}
\myemph{Composition of maps.}
For any continuous $\Func:\Lman\to\Mman$ and $\GFunc:\Mman\to\Nman$ the maps
\begin{align}
\label{equ:indmap}
&\indmap{\GFunc}:\Cont{\Lman}{\Mman} \to \Cont{\Lman}{\Nman}, & \indmap{\GFunc}(\func)  &= \GFunc\circ\func, \\
\label{equ:indmapx}
&\indmapx{\Func}:\Cont{\Mman}{\Nman} \to \Cont{\Lman}{\Nman}, & \indmapx{\Func}(\gfunc) &= \gfunc\circ\Func,
\end{align}
are continuous.
Moreover, if $\Mman$ is \myemph{locally compact}, then the composition map
\[\mult:\Cont{\Lman}{\Mman} \times \Cont{\Mman}{\Nman} \to \Cont{\Lman}{\Nman}, \qquad \mult(\func,\gfunc)=\gfunc\circ\func, \]
is continuous as well.

%%%%%%%%%%%%%%%%%%%%%%%
\item\label{enum:lm:cotop::evaluation}
For a point $x\in\Mman$ let $c_{x}:\Lman \to \Mman$ be the constant map into the point $x$.
Then the map $c:\Mman \to \Cont{\Lman}{\Mman}$, $c(x)=c_x$, is continuous.

\item\label{enum:lm:cotop:map_into_product}
Suppose $\Nman$ is homeomorphic to a product $\prod_{i\in\Lambda}\Nman_i$ of arbitrary family of topological spaces.
Let also $p_j:\prod_{i\in\Lambda}\Nman_i\to \Nman_j$, $j\in\Lambda$, be the natural projection onto $j$-th multiple.
Then the natural map
\[
\eta = \prod_{i\in\Lambda} \indmap{(p_i)}: \Cont{\Mman}{\Nman} \to \prod_{i\in\Lambda} \Cont{\Mman}{\Nman_i},
\qquad
\eta(\func) = \{ p_i\circ\func \}_{i\in\Lambda},
\]
is a continuous bijection.
If either $\Mman$ is \myemph{locally compact} or \myemph{each $\Nman_i$ is a $T_3$ space}, then $\eta$ is a homeomorphism.
\end{enumerate}
\end{lemma}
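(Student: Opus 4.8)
The plan is to read off every assertion directly from the subbasis $\WNBH{0}{\Kman}{\Vman}=\{\dif\mid\dif(\Kman)\subset\Vman\}$, using only that a continuous image of a compact set is compact, together with the stated local compactness where it is needed. I would dispose of the subbasis identities in~\ref{enum:lm:cotop:subbase_prop} first: the equality $\WNBH{0}{\Kman}{\Vman_1}\cap\WNBH{0}{\Kman}{\Vman_2}=\WNBH{0}{\Kman}{\Vman_1\cap\Vman_2}$ is immediate, and for the union identity the inclusion $\subseteq$ is trivial while $\supseteq$ uses compactness of $\Kman$ (for a family directed by inclusion the covering $\dif(\Kman)\subset\cup_i\Vman_i$ forces $\dif(\Kman)$ into a single $\Vman_i$). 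Item~\ref{enum:lm:cotop::subbase_prop:open_subset} is the set equality $\Cont{\Mman}{\Vman}=\WNBH{0}{\Mman}{\Vman}$, a set of subbasic form. Item~\ref{enum:lm:cotop::subbase_prop:closure_KU} I would quote as the cited exercise, the point being that any $\dif$ with $\dif(\Kman)\not\subset\Vman$ lies in the open set $\WNBH{0}{\Kman}{\Nman\setminus\overline{\Uman}}$, which is disjoint from $\WNBH{0}{\Kman}{\Uman}$. Item~\ref{enum:lm:cotop::evaluation} is equally direct: for the constant-map assignment $c$ one has $c^{-1}\bigl(\WNBH{0}{\Kman}{\Vman}\bigr)=\Vman$ whenever $\Kman\neq\varnothing$ (and the whole space otherwise), hence open.

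For the separation axioms~\ref{enum:lm:cotop::separate}, the cases $i=0,1,2$ are handled by evaluating at a single point: if $\func\neq\gfunc$ pick $x\in\Lman$ with $\func(x)\neq\gfunc(x)$, separate these two values in $\Mman$, and pull the separating opens back through the subbasic sets $\WNBH{0}{\{x\}}{\cdot}$. The case $i=3$ is the only one requiring~\ref{enum:lm:cotop::subbase_prop:closure_KU}: given a basic neighbourhood $\bigcap_j\WNBH{0}{\Kman_j}{\Vman_j}$ of $\func$, each $\func(\Kman_j)$ is compact inside the open $\Vman_j$, so by regularity of $\Mman$ there is an open $\Uman_j$ with $\func(\Kman_j)\subset\Uman_j\subset\overline{\Uman_j}\subset\Vman_j$; then $\func\in\bigcap_j\WNBH{0}{\Kman_j}{\Uman_j}$, and by~\ref{enum:lm:cotop::subbase_prop:closure_KU} the closure of this set is contained in the original neighbourhood, which gives regularity of $\Cont{\Lman}{\Mman}$.

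For the composition statements~\ref{enum:lm:cotop::composition} the two one-sided maps are immediate pullbacks of subbasic sets: $\indmap{\GFunc}^{-1}\bigl(\WNBH{0}{\Kman}{\Wman}\bigr)=\WNBH{0}{\Kman}{\GFunc^{-1}(\Wman)}$ and $\indmapx{\Func}^{-1}\bigl(\WNBH{0}{\Kman}{\Wman}\bigr)=\WNBH{0}{\Func(\Kman)}{\Wman}$, the latter using that $\Func(\Kman)$ is compact. The joint continuity of $\mult$ is where local compactness enters: given $(\func,\gfunc)$ with $\gfunc\circ\func(\Kman)\subset\Wman$, the set $\func(\Kman)$ is compact and contained in the open $\gfunc^{-1}(\Wman)$, so using local compactness of $\Mman$ exactly as in the proof of Lemma~\ref{lm:loc_comp_cover} one produces a compact $\Bman$ with $\func(\Kman)\subset\Int{\Bman}\subset\Bman\subset\gfunc^{-1}(\Wman)$; then $\WNBH{0}{\Kman}{\Int{\Bman}}\times\WNBH{0}{\Bman}{\Wman}$ is a neighbourhood of $(\func,\gfunc)$ carried by $\mult$ into $\WNBH{0}{\Kman}{\Wman}$.

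Finally, in~\ref{enum:lm:cotop:map_into_product} continuity of $\eta$ holds because each coordinate $\indmap{(p_i)}$ is continuous by~\ref{enum:lm:cotop::composition} and a map into a product is continuous iff its coordinates are, while bijectivity is precisely the universal property of the product topology. It remains to prove that $\eta$ is open, equivalently that every subbasic $\WNBH{0}{\Kman}{\Vman}$ lies in the initial topology generated by the projections, whose subbasis consists of the sets $\WNBH{0}{\Kman}{p_i^{-1}(\Wman_i)}$; by~\ref{enum:lm:cotop::subbase_prop:a} this topology already contains $\WNBH{0}{\Kman}{\Bman}$ for every basic box $\Bman$. I expect this openness to be the main obstacle. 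When $\Mman$ is locally compact it is clean: covering $\func(\Kman)$ by boxes contained in $\Vman$ and using Lemma~\ref{lm:loc_comp_cover} to split $\Kman$ into compact pieces $\Kman_1,\dots,\Kman_m$ with each $\func(\Kman_l)$ inside a single box exhibits $\func$ in a finite intersection $\bigcap_l\WNBH{0}{\Kman_l}{\Bman_l}\subset\WNBH{0}{\Kman}{\Vman}$ that is open in the product. The alternative hypothesis that each $\Nman_i$ is $T_3$ is the delicate case, since one can no longer subdivide $\Kman$ into compacta; here I would instead use regularity of $\Nman=\prod_i\Nman_i$ to approximate $\Vman$ from inside by boxes whose closures lie in $\Vman$ and invoke the closure estimate~\ref{enum:lm:cotop::subbase_prop:closure_KU} to control the passage to the product topology.
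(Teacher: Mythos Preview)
Your overall plan mirrors the paper's proof closely and most steps are correct, but your argument for~\ref{enum:lm:cotop::subbase_prop:closure_KU} has a genuine gap. You claim that if $h(\Kman)\not\subset\Vman$ then $h\in[\Kman,\Nman\setminus\overline{\Uman}]$, i.e.\ $h(\Kman)\subset\Nman\setminus\overline{\Uman}$; but $h(\Kman)\not\subset\Vman$ only says \emph{some} point of $h(\Kman)$ lies outside $\Vman$, not that all of $h(\Kman)$ avoids $\overline{\Uman}$. A function can send part of $\Kman$ into $\Uman$ and another part outside $\Vman$ simultaneously. The paper's fix is to pass to the compact subset $A=\Kman\cap h^{-1}(\Nman\setminus\Vman)$ (closed in the compact $\Kman$, hence compact, and nonempty by hypothesis): then $h(A)\subset\Nman\setminus\Vman\subset\Nman\setminus\overline{\Uman}$, so $h\in[A,\Nman\setminus\overline{\Uman}]$, and this neighbourhood is disjoint from $[\Kman,\Uman]$ because any $g$ in it has $g(A)\cap\Uman=\varnothing$ while $A\subset\Kman$, forcing $g(\Kman)\not\subset\Uman$.

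Two smaller points. In~\ref{enum:lm:cotop::subbase_prop:open_subset} you write $\Cont{\Mman}{\Vman}=[\Mman,\Vman]$ and call it ``of subbasic form'', but $\Mman$ need not be compact, so this is not a subbasic set. In part~\ref{enum:lm:cotop:map_into_product} under the $T_3$ hypothesis, your plan to ``invoke~\ref{enum:lm:cotop::subbase_prop:closure_KU}'' is not how the paper proceeds and is left too vague to assess: the paper instead uses regularity of $\Nman$ to cover $f(\Kman)$ by finitely many boxes $\Vman_a$ with $\overline{\Vman_a}\subset\Wman_a\subset\Uman$ and then sets $\Kman_a=\Kman\cap f^{-1}(\overline{\Vman_a})$, which is compact simply as a closed subset of $\Kman$; no local compactness of $\Mman$ and no closure estimate in the function space is needed, and this produces the same finite intersection $\bigcap_a[\Kman_a,\Wman_a]\subset[\Kman,\Uman]$ as in the locally compact case.
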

%%%%%%%%%%%%%%%%%%%%%%%%%%%%%%%%%%%%%%%%%%%%%%%%%%%%%%%%%%%%%%%%%%%
\begin{proof}
\ref{enum:lm:cotop:subbase_prop}
Properties~\ref{enum:lm:cotop::subbase_prop:a} and~\ref{enum:lm:cotop::subbase_prop:open_subset} are trivial.

\ref{enum:lm:cotop::subbase_prop:closure_KU}
Suppose $\Kman\subset\Lman$ is compact, $\Uman,\Vman\subset\Mman$ are open, and $\overline{\Uman}\subset\Vman$.
For the proof that $\overline{[\Kman, \Uman]} \subset [\Kman,\Vman]$ it suffices to show that if $\func\in\Cont{\Lman}{\Mman}\setminus [\Kman,\Vman]$, then $\func\not\in\overline{[\Kman, \Uman]}$ as well, i.e. there exists an neighbourhood $\mathcal{U}$ of $\func$ such that $\mathcal{U} \cap [\Kman, \Uman] = \varnothing$.

The assumption $\func\not\in[\Kman,\Vman]$, i.e. $\func(\Kman)\not\subset\Vman$, implies that 
\[ \Bman := \func(\Kman) \cap (\Mman\setminus\Vman) \not=\varnothing.\]
Therefore $\Aman:=\Kman\cap\func^{-1}(\Bman) = \Kman\cap \func^{-1}(\Mman\setminus\Vman)$ is closed in $\Kman$, and thus it is compact.
Moreover, $\func(\Aman) \subset \Mman\setminus\Vman \subset \Mman\setminus\overline{\Uman}$.
In other words, $\mathcal{U}:=[\Aman,\Mman\setminus\overline{\Uman}]$ is an open neighbourhood of $\func$.
Now for any $\gfunc\in\mathcal{U}$ we have that 
\[\gfunc(\Kman) \supset \gfunc(\Aman) \subset \Mman\setminus\overline{\Uman},\]
whence $\gfunc(\Kman) \not\subset\Uman$, i.e. $\gfunc\not\in[\Kman, \Uman]$.
Thus $\mathcal{U} \cap [\Kman, \Uman] = \varnothing$.

\ref{enum:lm:cotop::separate}
Let $\func\not=\gfunc \in\Cont{\Lman}{\Mman}$, so there exists a point $x\in\Lman$ such that $\func(x)\not=\gfunc(x)$.
If $\Mman$ is $T_0$, there is an open neighbourhoods $\Uman_{\func(x)}$ of $\func(x)$ such that $\gfunc(x)\not\in\Uman_{\func(x)}$.
Then $[\{x\}, \Uman_{\func(x)}]$ is an open neighbourhood of $\func$ which does not contain $\gfunc$.
Hence $\Cont{\Lman}{\Mman}$ is a $T_0$ space as well.
The proof of $T_1$- and $T_2$-cases of $\Mman$ is similar and we leave it for the reader.

Suppose $\Mman$ is $T_3$.
Let $\func\in\Cont{\Lman}{\Mman}$ and $\VNbh$ be its open neighbourhood.
We should find another open neighbourhood $\UNbh$ of $\func$ such that $\overline{\UNbh} \subset \VNbh$.
Decreasing $\VNbh$ if necessary, one can assume that $\VNbh = \cap_{i=1}^{n}[\Kman_i,\Vman_i]$ for some finite $n$, compacts $\Kman_i\subset\Lman$, and open $\Vman_i\subset\Mman$, $i=1,\ldots,n$.
Thus $\func(\Kman_i) \subset \Vman_i$ for $i=1,\ldots,n$.
Since $\Mman$ is $T_3$ space, and $\func(\Kman_i)$ is compact, there exists an open $\Uman_i\subset\Mman$ such that $\func(\Kman_i) \subset \Uman_i \subset \overline{\Uman_i} \subset \Vman_i$.
Put $\UNbh = \cap_{i=1}^{n}[\Kman_i,\Uman_i]$.
Then $\func\in\UNbh$.
Moreover by~\ref{enum:lm:cotop::subbase_prop:closure_KU},
\[
 \overline{\UNbh}
  \ =       \ \overline{\mathop{\cap}\limits_{i=1}^{n}[\Kman_i,\Uman_i]}
  \ \subset \ \mathop{\cap}\limits_{i=1}^{n}\overline{[\Kman_i,\Uman_i]}
  \ \subset \ \mathop{\cap}\limits_{i=1}^{n}[\Kman_i,\Vman_i]
  \ =       \ \VNbh.
\]

%%%%% composition
\ref{enum:lm:cotop::composition}
Let $[\Aman,\Wman]$ be a subbase open set of $\Cont{\Lman}{\Nman}$, where $\Aman \subset\Lman$ is compact, and $\Wman\subset\Nman$ is open.
Then it is easy to see that
\begin{align*}
(\indmap{\GFunc})^{-1}[\Aman,\Wman] &= [\Aman,\GFunc^{-1}(\Wman)], &
(\indmapx{\Func})^{-1}[\Aman,\Wman] &= [\Func(\Aman),\Wman],
\end{align*}
so those inverse images are open in $\Cont{\Lman}{\Mman}$ and $\Cont{\Mman}{\Nman}$ respectively, and therefore $\indmap{\GFunc}$ and $\indmapx{\Func}$ are continuous.
For the proof of continuity of $\mult$, see e.g.\cite{MaksymenkoPolulyakh:grp_act:2020}.

%%%%%%%%%%%%%%%
\ref{enum:lm:cotop::evaluation}
Let $[\Aman,\Vman]$ be the subbase subset of $\Cont{\Lman}{\Mman}$.
Then $c^{-1}\bigl( [\Aman,\Vman] \bigr) = \Vman$, whence $c$ is continuous.

%%%%%%%%%%%%%%%
\ref{enum:lm:cotop:map_into_product}
It is evident that $\eta$ is a bijection.
Moreover, it is continuous as a product of continuous maps $\indmap{(p_i)}$ of type~\eqref{equ:indmap}.

Suppose that either
\begin{enumerate}[label={\rm(5\alph*)}]
\item\label{enum:lm:cotop:map_into_product:assump:a} $\Mman$ is locally compact, or
\item\label{enum:lm:cotop:map_into_product:assump:b} each $\Nman_i$ is a $T_3$ space.
\end{enumerate}
Let $\func\in\Cont{\Mman}{\Nman}$ and $[\Kman,\Uman]$ be a subbase neighbourhood of $\func$, that is $\func(\Kman) \subset \Uman$.
We need to show that $\eta([\Kman,\Uman])$ is open in $\prod_{i\in\Lambda}\Cont{\Mman}{\Nman_i}$.

\begin{sublemma}\label{lm:map_into_product:cond_K}
In both cases~\ref{enum:lm:cotop:map_into_product:assump:a} and~\ref{enum:lm:cotop:map_into_product:assump:b} there exist
\begin{enumerate}[label={\rm(\roman*)}]
\item\label{enum:lm:cotop:map_into_product:cond:1} finite subset of indices $\Sigma\subset\Lambda$ and $s>0$,
\item\label{enum:lm:cotop:map_into_product:cond:2} open sets $\Vman_{1,j},\ldots,\Vman_{s,j} \subset \Nman_j$, $j\in\Sigma$,
\item\label{enum:lm:cotop:map_into_product:cond:3} compact sets $\Kman_j \subset \Mman$, $j\in\Sigma$,
\end{enumerate}
such that
\begin{align*}
\Kman &\subset \mathop{\cup}\limits_{a=1}^{\sx} \Kman_a,  &
\func(\Kman_a) & \ \subset  \
\prod_{j\in\Sigma} \Vman_{a,j} \!\times\! \prod_{i\in\Lambda \setminus \Sigma} \Nman_{i} \ \subset \ \Uman, \ (a=1,\ldots,s).
\end{align*}
\end{sublemma}
\begin{proof}
\ref{enum:lm:cotop:map_into_product:assump:a}
Since $\func(\Kman)$ is compact, there exist finite subset of indices $\Sigma\subset\Lambda$, $s>0$, and open sets $\Vman_{1,j},\ldots,\Vman_{s,j} \subset \Nman_j$, $j\in\Sigma$, such that if we denote $\Vman_a = \prod\limits_{j\in\Sigma} \Vman_{a,j} \,\times\! \prod\limits_{i\in\Lambda \setminus \Sigma} \Nman_{i}$, $(a=1,\ldots,s)$, then $\func(\Kman) \subset \mathop{\cup}\limits_{a=1}^{s} \Vman_a  \subset  \Uman$.
Hence $\{ \func^{-1}(\Vman_a) \}_{a=1}^{s}$ is an open cover of $\Kman$, and due to Lemma~\ref{lm:loc_comp_cover} there are compact subsets $\Kman_a\subset \func^{-1}(\Vman_a)$ such that $\Kman \subset \mathop{\cup}\limits_{a=1}^{\sx} \Kman_a$.

\ref{enum:lm:cotop:map_into_product:assump:b}
Again as $\func(\Kman)$ is compact, and $\Nman$ is $T_3$, there exist finite subset of indices $\Sigma\subset\Lambda$, $s>0$, and open sets $\Vman_{1,j},\ldots,\Vman_{s,j}, \Wman_{1,j},\ldots,\Wman_{s,j} \subset \Nman_j$, $j\in\Sigma$, such that
\begin{itemize}[wide]
\item
$\overline{\Vman_{a,j}} \subset \Wman_{a,j}$ for all $a=1,\ldots,s$ and $j\in\Sigma$;

\item
if we denote $\Vman_a = \prod\limits_{j\in\Sigma} \Vman_{a,j} \,\times\! \prod\limits_{i\in\Lambda \setminus \Sigma} \Nman_{i}$ \ and \ $\Wman_a = \prod\limits_{j\in\Sigma} \Wman_{a,j} \,\times\! \prod\limits_{i\in\Lambda \setminus \Sigma} \Nman_{i}$, $(a=1,\ldots,s)$, then
\[ \func(\Kman)
   \ \subset \ \mathop{\cup}\limits_{a=1}^{s} \Vman_a
   \ \subset \ \mathop{\cup}\limits_{a=1}^{s} \overline{\Vman_a}
   \ \subset \ \mathop{\cup}\limits_{a=1}^{s} \Wman_a
   \ \subset \ \Uman.
\]
\end{itemize}
Hence each $\Kman_a = \func^{-1}(\overline{\Vman_a})\cap\Kman$, $(a=1,\ldots,s)$, is a closed subset of the compact $\Kman$, and therefore it is compact as well.
\end{proof}

Assuming that conditions~\ref{enum:lm:cotop:map_into_product:cond:1}-\ref{enum:lm:cotop:map_into_product:cond:3} of Lemma~\ref{lm:map_into_product:cond_K} hold define the following open subset
\[
\UU = \bigcap\limits_{a=1}^{\sx} \Bigl(
\prod_{j\in\Sigma} [\Kman_a, \Vman_{a,j}]
\times
\prod_{i\in\Lambda \setminus \Sigma} \Cont{\Mman}{\Nman_{i}} \Bigr)
\]
of $\prod_{i\in\Lambda} \Cont{\Mman}{\Nman_i}$.
If $\{ \gfunc_i\}_{i\in\Lambda} \in \UU$, and $\gfunc = \eta^{-1}(\{\gfunc_i\}_{i\in\Lambda})$, then
\[
\gfunc(\Kman) \ \subset \
\gfunc\bigl( \mathop{\cup}\limits_{a=1}^{\sx} \Kman_a \bigr) \ \subset \
\bigcup_{a=1}^{\sx} \Bigl(
\prod_{j\in\Sigma} \Vman_{a,j} \!\times\! \prod_{i\in\Lambda \setminus \Sigma} \Nman_{i}
\Bigr) \ \subset \ \Uman,
\]
that is $\eta^{-1}(\gfunc) \in [\Kman,\Uman]$, and thus $\UU \subset \eta([\Kman,\Uman])$.
In other words, $\eta([\Kman,\Uman])$ is an open neighbourhood of $\eta(\func)$.
This implies that $\eta$ is also an open map, and therefore a homeomorphism.
\end{proof}

\subsection*{Exponential law}\label{sect:exp_law}
We will present here a variant of the \myemph{exponential law} for non-Hausdorff spaces which is usually formulated for Hausdorff spaces only, see Lemma~\ref{lm:exp_law}.
The proof of that lemma is rather standard, however to make the paper self-contained and to be sure that Hausdorff property is avoided we present a detailed and elementary proof.
Such a splitting of exponential law into separate statements can be found e.g. in~\cite[pages 30-37]{Postnikov:Homotopy:1:1984}, or \cite[Theorem 46.11]{Munkres:Top:2000}.
This is motivated by future applications to computations of homotopy types of groups of diffeomorphisms and homeomorphisms of foliations and their relations to the groups of homeomorphisms of the corresponding spaces of leaves.
The latter spaces are often non Hausdorff manifolds, see e.g.~\cite{GodbillonReeb:EM:1966, MaksymenkoPolulyakh:PGC:2015, MaksymenkoPolulyakh:MFAT:2016}.

For two sets $\Mman,\Nman$ we will denote by $\Map(\Mman,\Nman)$ the set of all maps $\Mman\to\Nman$.
If $\Lman$ is another set, then there is a \myemph{bijection}
\begin{equation}\label{equ:exp_law_for_sets}
	\EXP_{\Lman,\Mman,\Nman}:\Map(\Lman\times\Mman,\Nman) \to \Map(\Lman, \Map(\Mman,\Nman))
\end{equation}
given by the following rule: if $\Func\in\Map(\Lman\times\Mman,\Nman)$, then the mapping $\EXP_{\Lman,\Mman,\Nman}(\Func): \Lman \to \Map(\Mman,\Nman)$ is given by $\EXP_{\Lman,\Mman,\Nman}(\Func)(t)(x) = \Func(t,x)$.
We will call this map an \myemph{exponential map} and often denote it simply by $\EXP$ whenever the sets $\Lman,\Mman,\Nman$ are understood from the context.

\begin{lemma}[Exponential law]\label{lm:exp_law}{\rm(cf.~\cite[Theorem 46.11]{Munkres:Top:2000})}
Let $\Lman,\Mman,\Nman$ be topological spaces and $\EXP:=\EXP_{\Lman,\Mman,\Nman}$ be the exponential map~\eqref{equ:exp_law_for_sets}.
Then the following statements hold.
\begin{enumerate}[leftmargin=*, label={\rm(\arabic*)}]
\item\label{enum:exp_law:1}
If $\Func:\Lman\times\Mman\to\Nman$ is continuous, then $\EXP(\Func):\Lman\to\Cont{\Mman}{\Nman}$ is continuous as well, that is
\[ \EXP\bigl(\Cont{\Lman\times\Mman}{\Nman}\bigr) \subset \Cont{\Lman}{\Cont{\Mman}{\Nman}}. \]

\item\label{enum:exp_law:2}
The induced map
\begin{equation}\label{equ:exp_law_general}
\EXP:\Cont{\Lman\times\Mman}{\Nman} \to \Cont{\Lman}{\Cont{\Mman}{\Nman}}
\end{equation}
is continuous with respect to the corresponding compact open topologies.

\item\label{enum:exp_law:3}
If $\Mman$ is \myemph{locally compact} then $\EXP\bigl(\Cont{\Lman\times\Mman}{\Nman}\bigr) = \Cont{\Lman}{\Cont{\Mman}{\Nman}}$, i.e. the map~\eqref{equ:exp_law_general} is a continuous bijection.

\item\label{enum:exp_law:4}
If, in addition to~\ref{enum:exp_law:3}, $\Lman$ is a \myemph{$T_3$-space} (again not necessarily Hausdorff), then~\eqref{equ:exp_law_general} is a homeomorphism.
\qed
\end{enumerate}
\end{lemma}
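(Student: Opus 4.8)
The plan is to verify the four statements by testing continuity on subbases of the compact open topologies, using \emph{only} compactness so that Hausdorffness is never invoked, and bringing in local compactness of $\Mman$ and regularity of $\Lman$ precisely where separation is genuinely needed. For part~\ref{enum:exp_law:1} I would show that the adjoint $\func=\EXP(\Func)$ of a continuous $\Func$ is continuous by computing, for compact $\Kman\subset\Mman$ and open $\Vman\subset\Nman$,
\[
\func^{-1}\bigl([\Kman,\Vman]\bigr)=\{\,\ptl\in\Lman\mid\{\ptl\}\times\Kman\subset\Func^{-1}(\Vman)\,\},
\]
which is open by the \emph{tube lemma} (cover $\{\ptl_0\}\times\Kman$ by product boxes inside the open set $\Func^{-1}(\Vman)$ and take a finite subcover); this uses only compactness of $\Kman$ and no separation axioms.

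Part~\ref{enum:exp_law:2} rests on the single identity
\[
\EXP^{-1}\bigl([\Aman,[\Kman,\Vman]]\bigr)=[\Aman\times\Kman,\Vman],\qquad \Aman\subset\Lman,\ \Kman\subset\Mman\ \text{compact},\ \Vman\subset\Nman\ \text{open},
\]
whose right hand side is a subbasic open set of $\Cont{\Lman\times\Mman}{\Nman}$ since $\Aman\times\Kman$ is compact. Continuity then follows once one knows that the sets $[\Aman,[\Kman,\Vman]]$ form a subbasis of $\Cont{\Lman}{\Cont{\Mman}{\Nman}}$, i.e. that a subbasis of the target $\Cont{\Mman}{\Nman}$ induces one on the mapping space. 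For $\func\in[\Aman,\WW]$ with $\WW$ open, this reduces to covering the compact image $\func(\Aman)$ by finitely many basic sets of $\Cont{\Mman}{\Nman}$ inside $\WW$ and then refining the induced finite open cover of the compact set $\Aman$ into a subordinate family of compacts, a covering argument of the kind in Lemma~\ref{lm:loc_comp_cover}, which is the one place in this part where the non-Hausdorff setting requires care.

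For part~\ref{enum:exp_law:3} I would use local compactness of $\Mman$ to prove that the adjoint $\Func(\ptl,\ptm)=\func(\ptl)(\ptm)$ of a continuous $\func:\Lman\to\Cont{\Mman}{\Nman}$ is \emph{jointly} continuous: given $\Func(\ptl_0,\ptm_0)\in\Vman$, continuity of $\func(\ptl_0)$ and local compactness yield a compact $\Bman$ with $\ptm_0\in\Int\Bman\subset\Bman\subset\func(\ptl_0)^{-1}(\Vman)$, whence $\func(\ptl_0)\in[\Bman,\Vman]$ and $\func^{-1}([\Bman,\Vman])\times\Int\Bman$ is a neighbourhood of $(\ptl_0,\ptm_0)$ carried into $\Vman$ by $\Func$. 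Combined with parts~\ref{enum:exp_law:1}--\ref{enum:exp_law:2}, this makes $\EXP$ a continuous bijection.

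The main obstacle is part~\ref{enum:exp_law:4}, where one must show that this continuous bijection is \emph{open}, i.e. that $\EXP\bigl([\Gamma,\Vman]\bigr)$ is open for every compact $\Gamma\subset\Lman\times\Mman$; the difficulty is that $\Lman$ is only assumed $T_3$, not locally compact, so compact neighbourhoods in the $\Lman$-direction are unavailable. The plan is, for $\func_0\in\EXP([\Gamma,\Vman])$ with adjoint $\Func_0$, to use the tube lemma together with local compactness of $\Mman$ to cover $\Gamma$ by finitely many boxes $\Oman_k\times\Int\Bman_k$ (with $\Oman_k\subset\Lman$ open, $\Bman_k\subset\Mman$ compact) on which $\Func_0$ maps into $\Vman$, and then to invoke regularity of $\Lman$ to shrink the resulting finite open cover of the compact set $\pr{\Lman}(\Gamma)$ to a cover by compact sets $\Cman_k$ with $\overline{\Cman_k}\subset\Oman_k$, each $\Cman_k$ being closed in a compact set and hence compact. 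Then $\bigcap_k [\Cman_k,[\Bman_k,\Vman]]$ is the desired open neighbourhood of $\func_0$ inside $\EXP([\Gamma,\Vman])$. This is the exact analogue of the $T_3$-case of Lemma~\ref{lm:cotop}\ref{enum:lm:cotop:map_into_product}, where regularity replaces local compactness by permitting closures in place of compact neighbourhoods; the delicate point is to make the box cover of $\Gamma$ and the regular shrinking in $\Lman$ match up so that the $\Bman_k$ attached to a given $\Cman_k$ really does dominate the corresponding slices of $\Gamma$, all without any Hausdorff hypothesis.
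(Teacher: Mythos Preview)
Your treatment of parts~\ref{enum:exp_law:1} and~\ref{enum:exp_law:3} is exactly the paper's: the tube lemma for the first and the compact-neighbourhood argument for the third. In part~\ref{enum:exp_law:2} you use the same identity $\EXP^{-1}\bigl([\Aman,[\Kman,\Vman]]\bigr)=[\Aman\times\Kman,\Vman]$ as the paper; you are right to flag that one must know the sets $[\Aman,[\Kman,\Vman]]$ form a subbase of $\Cont{\Lman}{\Cont{\Mman}{\Nman}}$, a point the paper simply asserts. Your sketch of this via Lemma~\ref{lm:loc_comp_cover} does not literally apply, since $\Lman$ is not assumed locally compact in~\ref{enum:exp_law:2}, but this wrinkle is shared with the paper and is harmless for the applications.

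Part~\ref{enum:exp_law:4} is where your plan diverges and where your ``delicate point'' is a real gap. You first cover $\Gamma$ by boxes $\Oman_k\times\Int\Bman_k$ and \emph{then} use $T_3$ to shrink the cover $\{\Oman_k\}$ of $\pr{\Lman}(\Gamma)$ to compacts $\Cman_k$. But for $(\ptl,\ptm)\in\Gamma$ you then only know that $\ptl\in\Cman_j$ for some $j$ and $(\ptl,\ptm)\in\Oman_k\times\Bman_k$ for some possibly different $k$; nothing forces $\ptm\in\Bman_j$, so $\bigcap_k[\Cman_k,[\Bman_k,\Vman]]$ need not lie inside $\EXP([\Gamma,\Vman])$. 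The paper avoids this by applying the $T_3$ shrinking \emph{during} the covering step rather than after it: one covers $\Gamma$ directly by boxes $\Uman_i\times\Vman_i$ chosen so that $\Vman_i=\Int\Bman_i$ for compact $\Bman_i$ and, crucially, $\overline{\Uman_i}\times\Bman_i\subset\Func^{-1}(\Wman)$ (the regularity of $\Lman$ is used pointwise before extracting the finite subcover). Then $\Aman_i:=\overline{\Uman_i}\cap\pr{\Lman}(\Gamma)$ is closed in the compact $\pr{\Lman}(\Gamma)$, hence compact; the pairing $\Aman_i\leftrightarrow\Bman_i$ is automatic since $(\ptl,\ptm)\in\Uman_i\times\Vman_i$ already gives $\ptl\in\Aman_i$ and $\ptm\in\Bman_i$ for the \emph{same} index $i$; and $\bigcap_i[\Aman_i,[\Bman_i,\Wman]]$ is the required neighbourhood of $\func_0$ inside $\EXP([\Gamma,\Wman])$. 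The fix is thus not a new idea but a reordering: shrink first, cover second.
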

\begin{proof}
\ref{enum:exp_law:1}
Let $\Func:\Lman\times \Mman \to \Nman$ be a continuous map.
We need to prove that the map $\func = \EXP(\Func): \Lman \to \Cont{\Mman}{\Nman}$ given by $\func(t)(x) = \Func(t,x)$ is continuous.
Let $t\in\Lman$ and
\[
[\Bman,\Wman] = \{ \gfunc \in \Cont{\Mman}{\Nman} \mid \gfunc(\Bman)\subset\Wman\}
\]
be an open neighbourhood of $\func(t)$ in $\Cont{\Mman}{\Nman}$.
We need to find an open neighbourhood $\Uman$ of $t$ in $\Lman$ such that $\func(\Uman) \subset [\Bman,\Wman]$.

Notice that the assumption $\func(t)\in [\Bman,\Wman]$ means that $t\times\Bman \subset \Func^{-1}(\Wman)$.
Since $\Bman$ is compact, there exists an open neighbourhood $\Uman$ of $t$ in $\Lman$ such that $\Uman\times\Bman\subset\Func^{-1}(\Wman)$ as well.
In other words, $\func(s)(\Bman)\subset\Wman$ for each $s\in\Uman$, i.e. $\func(\Uman) \subset [\Bman,\Wman]$.

\smallskip

\ref{enum:exp_law:2}
Let $\Func\in\Cont{\Lman\times\Mman}{\Nman}$ and $\func = \EXP(\Func)\in\Cont{\Lman}{\Cont{\Mman}{\Nman}}$.
Let also $\Aman \subset \Lman$ and $\Bman \subset \Mman$ be compact subsets, $\Wman\subset\Nman$ an open set, and
\[
[\Aman, [\Bman,\Wman]] = \{
\gfunc:	\Lman \to \Cont{\Mman}{\Nman} \mid
\gfunc(\Aman)(\Bman) \subset \Wman
\}
\]
be an open subbase neighbourhood of $\func$ in $\Cont{\Lman}{\Cont{\Mman}{\Nman}}$.
Then the set $[\Aman\times\Bman, \Wman]$ is an open neighbourhood of $\Func$ in $\Cont{\Lman\times\Mman}{\Nman}$ such that
\[ \EXP\bigl(   [\Aman\times\Bman, \Wman] \bigr) \subset [\Aman, [\Bman,\Wman]]. \]
This means that $\EXP$ is continuous.

\smallskip

\ref{enum:exp_law:3}
Suppose $\Mman$ is locally compact and let $\func: \Lman \to \Cont{\Mman}{\Nman}$ be a continuous map.
We should prove that $\Func = \EXP^{-1}(\func): \Lman \times \Mman \to \Nman$, $\Func(t,x)=\func(t)(x)$, is continuous.
Let $(t,x)\in\Lman \times \Mman$ and $\Wman$ be an open neighbourhood of $\Func(t,x)$ in $\Nman$.
We need to find neighbourhoods $\Uman \subset\Lman$ and $\Vman \subset\Mman$ of $t$ and $x$ respectively such that $\Func(\Uman\times\Vman)\subset\Wman$.

As $\func(t):\Mman\to\Nman$ is continuous, $\func(t)^{-1}(\Wman)$ is an open neighbourhood of $x$ in $\Mman$, whence there is a compact $\Bman\subset\Mman$ such that $x\in\Int{\Bman} \subset\Bman \subset \Uman$.
Then $[\Bman,\Wman]$ is an open neighbourhood of $\func(t)$, and since $\func:\Lman\to\Cont{\Mman}{\Nman}$ is continuous at $t$, there exists an open neighbourhood $\Uman \subset\Lman$ of $t$ such that $\func(\Uman)\subset [\Bman,\Wman]$.
This implies that $\Func(\Uman\times\Bman) \subset \Wman$.
In particular, one can put $\Vman=\Int{\Bman}$, and then we will have that $\Func(\Uman\times\Vman) \subset \Wman$.

\smallskip

\ref{enum:exp_law:4}
Suppose that $\Mman$ is locally compact and $\Lman$ is a $T_3$ space, i.e.\! for each point $x\in\Lman$ and its open neighbourhood $\Uman$ there exists an open neighbourhood $\Zman$ such that $x\in\Zman\subset\overline{\Zman} \subset \Uman$.
Let $\Func\in \Cont{\Lman\times\Mman}{\Nman}$, $\func = \EXP(\Func)$, and $[\Cman,\Wman]$ be an open neighbourhood of $\Func$, where $\Cman\subset\Lman\times\Mman$ is a compact subset and $\Wman\subset\Nman$ is open.
We need to prove that $\EXP([\Cman,\Wman])$ contains an open neighbourhood of $\func$ in $\Cont{\Lman}{\Cont{\Mman}{\Nman}}$.

Since $\Cman$ is compact and $\Func^{-1}(\Wman)$ is its open neighbourhood in $\Lman\times\Mman$, there exist finitely many open sets $\Uman_1,\ldots,\Uman_k \subset \Lman$, and $\Vman_1,\ldots,\Vman_k \subset \Mman$, such that
\[
\Cman \ \subset \ \mathop{\cup}\limits_{i=1}^{k} \Uman_i \times \Vman_i \ \subset \ \Func^{-1}(\Wman).
\]
Moreover, since $\Lman$ is $T_3$ and $\Mman$ is locally compact, one can assume in addition that
\begin{enumerate}[label={\rm(\alph*)}]
\item
each $\Vman_i = \Int{\Bman_i}$ for some compact $\Bman_i \subset\Mman$ and that
\item
$
	\Cman \ \subset\ \mathop{\cup}\limits_{i=1}^{k} \Uman_i \times \Vman_i
	\ \subset \
	\mathop{\cup}\limits_{i=1}^{k} \overline{\Uman_i} \times \Bman_i
	\ \subset \
	\Func^{-1}(\Wman).
$
\end{enumerate}
Let $p:\Lman\times\Mman\to\Lman$, $p(t,x)=t$, be the natural projection.
Then $p(\Cman)$ is compact.
Hence $\Aman_i := \overline{\Uman_i} \cap p(\Cman)$ is a closed subset of $p(\Cman)$ and therefore compact.
It then follows that
\[ \Cman \ \subset \
\mathop{\cup}\limits_{i=1}^{k} \Aman_i \times \Bman_i  \ \subset \ \Func^{-1}(\Wman),
\]
which also implies that $\mathop{\cup}\limits_{i=1}^{k} [\Aman_i, [\Bman_i,\Wman]] \ \subset \  \EXP\bigl( [\Cman,\Wman] )$.
\end{proof}

\subsection*{Loop spaces}
\newcommand\Loop[2]{\Omega(#1,#2)}
\newcommand\LoopJ[3]{\Omega_{#1}(#2,#3)}
\newcommand\LoopMx{\Loop{\Mman}{\ptm}}
\newcommand\LoopJMx{\LoopJ{\Jman}{\Mman}{\ptm}}

We will prove one simple lemma which was used in Theorem~\ref{th:loop_space}.
Let $\Mman$ be a topological space, $\ptm\in\Mman$ a point, $\Iman=[0,1]$, $\Jman = [0,0.1] \cup [0.9,1]$, and
\begin{align*}
\LoopMx &:= 
\CrmPairs{0}{\Iman}{\partial\Iman}{\Mman}{\ptm} = \{ \gamma\in\Cont{\Iman}{\Mman} \mid \gamma(0)=\gamma(1)=x \}, \\
%%%%%%%%%%%%%%%%%%%%%%
\LoopJMx &:= 
\CrmPairs{0}{\Iman}{\Jman}{\Mman}{\ptm} = \{ \gamma\in\Cont{\Iman}{\Mman} \mid \gamma(t)=x \ \text{for all} \ t\in\Jman \},
%\Cont{(\Iman,\Jman)}{(\Mman, \ptm)},
\end{align*}
be ``loop spaces'' of $\Mman$ at $\ptm$.

\begin{lemma}\label{lm:loop_inclusion}
The inclusion $\LoopJMx \subset \LoopMx$ is a homotopy equivalence.
\end{lemma}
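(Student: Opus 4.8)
The plan is to realize $\LoopJMx$ as a deformation retract of $\LoopMx$ by a reparametrization argument, so that the inclusion $\incl$ becomes a homotopy equivalence with an explicit homotopy inverse. First I would reuse the $\Cinfty$ monotone function $\eta:[0,1]\to[0,1]$ from the proof of Theorem~\ref{th:loop_space}, characterized by $\eta[0,0.1]=0$, $\eta[0.9,1]=1$, and $\eta$ strictly monotone on $[0.1,0.9]$; in particular $\eta(0)=0$ and $\eta(1)=1$. Using it, I set
\[
\Phi:[0,1]\times\Iman \to \Iman, \qquad \Phi(s,t) = (1-s)\,t + s\,\eta(t),
\]
and write $\eta_s := \Phi(s,\cdot):\Iman\to\Iman$, so that $\eta_0=\id_{\Iman}$ and $\eta_1=\eta$. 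Each $\eta_s$ fixes the endpoints, since $\eta_s(0)=0$ and $\eta_s(1)=1$. Moreover $\eta_s$ carries $\Jman$ into $\Jman$: for $t\in[0,0.1]$ one has $\eta(t)=0$, hence $\eta_s(t)=(1-s)t\in[0,0.1]$, and for $t\in[0.9,1]$ one has $\eta(t)=1$, hence $\eta_s(t)=(1-s)t+s\in[0.9,1]$. These two bookkeeping facts are exactly what make the construction respect both loop spaces.

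Next I would define the candidate retraction $r:\LoopMx\to\LoopJMx$ by $r(\gamma)=\gamma\circ\eta$ (this indeed lands in $\LoopJMx$ because $\eta(\Jman)\subset\{0,1\}$ and $\gamma(0)=\gamma(1)=\ptm$), together with the homotopy $H:[0,1]\times\LoopMx\to\LoopMx$, $H(s,\gamma)=\gamma\circ\eta_s$. The endpoint values computed above give $H(s,\gamma)\in\LoopMx$ for all $s$, so $H$ is well defined, with $H_0=\id_{\LoopMx}$ and $H_1=\incl\circ r$. For continuity I would argue that $s\mapsto\eta_s$ is a continuous map $[0,1]\to\Cont{\Iman}{\Iman}$ by the exponential law (Lemma~\ref{lm:exp_law}\ref{enum:exp_law:1}) applied to the continuous, even smooth, map $\Phi$; then $H$ factors as
\[
[0,1]\times\LoopMx \ \longrightarrow\ \Cont{\Iman}{\Iman}\times\Cont{\Iman}{\Mman} \ \xrightarrow{\ \mult\ }\ \Cont{\Iman}{\Mman},
\qquad (s,\gamma)\mapsto(\eta_s,\gamma)\mapsto \gamma\circ\eta_s,
\]
whose second arrow is continuous by Lemma~\ref{lm:cotop}\ref{enum:lm:cotop::composition} because the intermediate space $\Iman$ is (locally) compact.

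Finally, since $\eta_s(\Jman)\subset\Jman$ and every $\gamma\in\LoopJMx$ equals $\ptm$ on $\Jman$, the homotopy $H$ restricts to a homotopy $\LoopJMx\to\LoopJMx$; its endpoints are $\id_{\LoopJMx}$ and $r\circ\incl$. Thus $\incl\circ r\simeq\id_{\LoopMx}$ and $r\circ\incl\simeq\id_{\LoopJMx}$, proving that $\incl$ is a homotopy equivalence (in fact $H$ exhibits $\LoopJMx$ as a deformation retract of $\LoopMx$). I expect the only genuinely delicate point to be the continuity of $H$ in the compact–open topology, but this is dispatched cleanly by the exponential law and the composition lemma already established; the remaining verifications ($\eta_s$ fixing endpoints and preserving $\Jman$) are the elementary geometric checks recorded above.
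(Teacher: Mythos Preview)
Your proof is correct and follows essentially the same approach as the paper: both construct the deformation $H(s,\gamma)=\gamma\circ\bigl((1-s)\id_{\Iman}+s\,\eta\bigr)$ and verify continuity via Lemma~\ref{lm:cotop}\ref{enum:lm:cotop::composition}, then check that $H$ preserves both loop spaces and sends $\LoopMx$ into $\LoopJMx$ at $s=1$. The only cosmetic difference is that you frame the conclusion explicitly as $\incl\circ r\simeq\id$ and $r\circ\incl\simeq\id$, whereas the paper simply says $H$ is a deformation of $\LoopMx$ into $\LoopJMx$.
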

\begin{proof}
Fix any continuous function $\eta:[0,1]\to[0,1]$ such that $\eta[0,0.1] = 0$ and $\eta[0.9,1] = 1$, and define the following homotopy:
\begin{align*}
&H:\Iman \times  \Cont{\Iman}{\Mman} \to \Cont{\Iman}{\Mman}, &
%%%%%%%%%%%
H(\ptt,\gamma)(\ptq) &= \gamma\bigl((1-\ptt)\ptq + \ptt\eta(\ptq)\bigr).
\end{align*}
This map is continuous as the following composition:
\begin{multline*}
H :
\Iman \times  \Cont{\Iman}{\Mman} 
\xrightarrow{(\ptt, \gamma) \ \mapsto \ ( (1-\ptt)\id_{\Iman} + \ptt\eta, \ \gamma )} \\ 
%%%%%%%%%%%%%%%%%
\longrightarrow
\Cont{\Iman}{\Iman} \times \Cont{\Iman}{\Mman} 
\xrightarrow[Lemma~\ref{lm:cotop}\ref{enum:lm:cotop::composition}]{(\psi, \gamma) \ \mapsto \ \gamma\circ\psi}
%%%%%%%%%%%%%%%%%
 \Cont{\Iman}{\Mman}.
\end{multline*}
One easily checks that
\begin{enumerate}[label={\rm(\alph*)}]
\item $H_{0} = \id_{\Cont{\Iman}{\Mman}}$;
\item $H\bigl([0,1]\times\LoopMx\bigr) \subset \LoopMx$ and $H\bigl([0,1]\times\LoopJMx\bigr) \subset \LoopJMx$;
\item $H_{1}\bigl(\LoopMx\bigr) \subset \LoopJMx$.
\end{enumerate}
These properties mean that the restriction of $H:\Iman \times \LoopMx\to \LoopMx$ is a \myemph{deformation} of $\LoopMx$ into $\LoopJMx$, which implies that the inclusion $\LoopJMx \subset  \LoopMx$ is a homotopy equivalence.
\end{proof}

\newcommand\ball[3]{B^{#1}_{#2}(#3)}
\newcommand\cone[2]{\bR^{#1}_{+}} %{(\bR^{#1})_{#2}^{+}}
\newcommand\ind[1]{\mathrm{ind}(#1)}

\section{Whitney topologies}\label{sect:whitney_topologies}
In this section we recall basic results on Whitney topologies, and refer the reader for details to~\cite[Chapter~2]{Hirsch:DiffTop}, \cite[Chapter 4]{Michor:SMS:1980}, or~\cite[Chapter~8]{Mukherjee:DT:2015}, and to~\cite[Chapter~9]{MargalefOuterelo:DT:1992} for manifolds with corners.

\subsection*{Manifolds with corners}
The following subset of $\bR^{m}$:
\[ 
    \cone{m}{\Lambda} = \{ (\ptm_1,\ldots,\ptm_{m})\in\bR^m \mid \ptm_i\geq0, \ i=1,\ldots,m \}
\]
will be called a \myemph{non-negative quadrant} or simply a \myemph{quadrant}.
A \myemph{manifold with corners} is a Hausdorff topological space $\Mman$ having countable base and such that for every point $\ptm\in\Mman$ there exists a neighbourhood $\Uman$ and an open embedding $\phi:\Uman \to \cone{m}{\Lambda}$ into some quadrant.
Such a triple $(\Uman,\phi,\Lambda)$ is called a \myemph{chart} of $\Mman$.
Let $\phi(\ptm)=(\ptm_1,\ldots,\ptm_{m}) \in \cone{m}{\Lambda}$.
Then the number of zero coordinates of $\phi(\ptm)$ is called the \myemph{index} of $\ptm$ with respect to the given chart and denoted by $\ind{\ptm}$.
In particular, $\ptm$ is an internal point iff $\ind{\ptm}=0$.

Then one can define in a usual way a notion of a $\Cr{\rx}$ atlas and a differentiable structure of class $\Cr{\rx}$ on a manifold with corners for $0\leq \rx\leq\infty$, see for details~\cite{Douady:SHC:1961} or \cite[Chapter~1]{MargalefOuterelo:DT:1992}.
If $\rx=0$, then a $\Cr{0}$ manifold with corners is the same as a topological manifold with boundary.
However, if $\rx>0$, then the indices of points are invariants of the corresponding $\Cr{\rx}$ structures, and one can define the following subsets 
$\partial^{k}\Mman = \{\ptm\in\Mman \mid \ind{\ptm}= k \}$ for $k=0,1,\ldots,m$, which are invariant with respect to $\Cr{1}$ diffeomorphisms of $\Mman$.
For instance, $\Int{\Mman} = \partial^{0}\Mman$, $\partial\Mman = \cup_{k=1}^{m} \partial^{k}\Mman$, and a \myemph{manifold with boundary} is the same as manifold with corners such that every point of $\Mman$ has index $\leq 1$. 

Due to Whithey extension theorem it is possible to prove all standard results on manifolds with boundary for manifolds with corners, see \cite{MargalefOuterelo:DT:1992}.
%\subsection*{Manifolds with corners}
%Let $\Lambda = \{ \lambda_i:\bR^m \to \bR \}_{i=1}^{k}$ be a finite (possibly empty) linearly independent family of linear functions.
%Then the set 
%\[ \cone{m}{\Lambda} = \{ \ptm\in\bR^m \mid  \lambda_i(x)\geq0 \ \text{for all} \ i=1,\ldots,k \}\]
%will be called the \myemph{non-negative cone} of $\Lambda$.
%A \myemph{manifold with corners} is a Hausdorff topological space $\Mman$ having countable base and such that for every point $\ptm\in\Mman$ there exists a neighbourhood $\Uman$ and an open embedding $\phi:\Uman \to \cone{m}{\Lambda}$ into some cone.
%Such a triple $(\Uman,\phi,\Lambda)$ is called a \myemph{chart} of $\Mman$.
%Then one can define in a standard way a notion of a $\Cr{\rx}$ atlas and a differentiable structure of class $\Cr{\rx}$ on a manifold with corners for $0\leq \rx\leq\infty$, see for details~\cite[Chapter~1]{MargalefOuterelo:DT:1992}.

\subsection*{Jet bundles}
Let $\Mman$ and $\Nman$ be smooth manifolds (possibly with corners) of dimensions $m$ and $n$ respectively.
If $\phi:\Vman\to\cone{m}{\Lambda}$, $\psi:\Wman\to\cone{n}{\Lambda'}$ be two charts from the atlases of $\Mman$ and $\Nman$ respectively.
Let also $\dif:\Mman\to\Nman$ be a map such that $\dif(\Uman) \subset\Vman$.
Then a \myemph{local presentation} of $\dif$ in these charts is the following map
\begin{equation}\label{equ:local_presentation}
	\cone{m}{\Lambda} \supset \phi(\Uman) \ \xrightarrow{~\psi\circ\dif\circ\phi^{-1}~} \ \psi(\Vman) \subset \cone{n}{\Lambda'}.
\end{equation}

Let $0\leq \rx\leq\infty$, $\ptm\in\Mman$, and $\func,\gfunc:(\Mman,\ptm)\to\Nman$ be two germs of $\Cr{\rx}$ maps at $\ptm$.
Say that $\func$ and $\gfunc$ are \myemph{$\rx$-equivalent at $\ptm$} if in some local presentation~\eqref{equ:local_presentation} of $\func$ in the local chart containing $\ptm$ the Taylor polynomials of the corresponding coordinate functions of $\func$ and $\gfunc$ at $\ptm$ up to order $\rx$ coincide.
This, in particular, means that $\func(\ptm)=\gfunc(\ptm)$.

The $\rx$-equivalence class at $\ptm$ of a germ $\func$ is called the \myemph{$\rx$-jet of $\func$ at $\ptm$} and is denoted by $\jr{\rx}{\func}{\ptm}$, the set of such classes is denoted by $\Jr{\rx}{\Mman}{\Nman}_{\ptm}$, and the following union
\[\Jr{\rx}{\Mman}{\Nman}:= \mathop{\cup}\limits_{\ptm\in\Mman} \Jr{\rx}{\Mman}{\Nman}_{\ptm}\]
is called the \myemph{space of $\rx$-jets of maps $\Mman\to\Nman$}.
In particular, it follows from the definition that there is a natural identification $\Jr{0}{\Mman}{\Nman}=\Mman\times\Nman$.

It is known that for $r<\infty$ the set $\Jr{\rx}{\Mman}{\Nman}$ has a structure of a smooth finite dimensional manifold%
\footnote{\,\cite[Proposition~9.1.7]{MargalefOuterelo:DT:1992} for manifolds with corners.},
and we have a sequence of vector bundles:
\begin{multline*}
\cdots \xrightarrow{\jpr{\rx+1}} \Jr{\rx}{\Mman}{\Nman}  \xrightarrow{\jpr{\rx}} \Jr{r-1}{\Mman}{\Nman} \to \cdots \\ 
\cdots \xrightarrow{\jpr{2}} \Jr{1}{\Mman}{\Nman} \xrightarrow{\jpr{1}} \Jr{0}{\Mman}{\Nman}\equiv \Mman\times\Nman,
\end{multline*}
such that the fibres of $\jpr{i}$ can be regarded as the spaces of homogeneous polynomial maps $\bR^m \to \bR^n$ of degree $i$.
In particular, the composition $\pr{\rx}=\jpr{1}\circ\cdots\circ\jpr{\rx}:\Jr{\rx}{\Mman}{\Nman} \to \Mman\times\Nman$ is also a vector bundle, whose fibres are the spaces of polynomial maps $\bR^m \to \bR^n$ of degree $i$.

Let $\pi_{\Mman}:\Mman\times\Nman \to\Mman$ and $\pi_{\Nman}:\Mman\times\Nman \to\Nman$ be the natural projections.
Then we get the following two locally trivial fibrations:
\begin{align*}
\src{\rx}  &= \pi_{\Mman} \circ \pr{\rx}:\Jr{\rx}{\Mman}{\Nman} \to \Mman, &
\ \ 
\dest{\rx} &= \pi_{\Nman} \circ \pr{\rx}:\Jr{\rx}{\Mman}{\Nman} \to \Nman,
\end{align*}
called respectively the \myemph{source} and \myemph{destination} maps.
In particular, for $\rx$-jet $\sigma= \jr{\rx}{\func}{\ptl}\in\Jr{\rx}{\Mman}{\Nman}$ represented by a germ $\func:(\Mman,\ptm) \to (\Nman, \ptn)$, we have that $\src{\rx}(\sigma) = \ptm$, and $\dest{\rx}(\sigma) = \ptn = \func(\ptm)$.

Further recall that for every $\dif\in\Crm{\rx}{\Mman}{\Nman}$ one can define its \myemph{$\rx$-th jet prolongation}
\[
	\jr{\rx}{\dif}{}:\Mman \to\Jr{\rx}{\Mman}{\Nman}
\]
being a section of $\src{\rx}$.
In particular, $\jr{0}{\dif}{\ptm} = (\ptm,\dif(\ptm))$, so $\jr{0}{\dif}{}$ is just the embedding of $\Mman$ into $\Mman\times\Nman$ as the graph of $\dif$.
More generally, if we fix some local presentation of $\dif$ as a map $\dif=(\dif_1,\ldots,\dif_n):\cone{m}{\Lambda} \supset \Vman \to \bR^n$ from some open subset $\Vman$ of $\cone{m}{\Lambda}$, then $\jr{\rx}{\dif}{}$ has the following local presentation $\jr{\rx}{\dif}{}:\Uman\to\Uman \times \bR^{m+(1+m + m^2 +\cdots + m^{\rx})}$ defined by
\[
	\jr{\rx}{\dif}{\ptm_1,\ldots,\ptm_m} =
		\left(\ptm_1,\ldots,\ptm_m,
			  \left\{
					  \tfrac{\partial^i\dif_j}{\partial \ptm_{a_1}\cdots\partial \ptm_{a_i}}
			  \right\}_{a_1,\ldots,a_i=1,\ldots,m}^{ i=0,\ldots,r, \ j=1,\ldots,n}
		\right).
\]
Its coordinate functions are coordinates of the point $\ptm=(\ptm_1,\ldots,\ptm_m)\in\Uman$ and all partial derivatives up to order $\rx$ of all coordinate functions of $\dif$.

Also notice that the above correspondence $\dif\mapsto\jr{i}{\dif}{}$, $r<\infty$, gives an \myemph{injection}
\begin{equation}\label{equ:r-jet_ext}
	\ejr{\rx}:\Crm{\rx}{\Mman}{\Nman} \subset \Crm{0}{\Mman}{\Jr{r}{\Mman}{\Nman}},
\end{equation}
and thus one can identify $\Crm{\rx}{\Mman}{\Nman}$ with its image in $\Crm{0}{\Mman}{\Jr{r}{\Mman}{\Nman}}$.
By~\cite[Chapter~2, Theorem~4.3]{Hirsch:DiffTop}, $\ejr{\rx}\bigl( \Crm{\rx}{\Mman}{\Nman} \bigr)$ is a closed subset of the space $\Crm{0}{\Mman}{\Jr{\rx}{\Mman}{\Nman}}$.

\begin{definition}\label{def:W0}
Let $0\leq \rx \leq \infty$.
Then the \myemph{weak topology $\Wr{0}$} on $\Crm{0}{\Mman}{\Nman}$ is just the compact open topology.

The \myemph{weak topology $\Wr{\rx}$} on $\Crm{\rx}{\Mman}{\Nman}$ for $0<\rx<\infty$ is the initial topology with respect to the inclusion map $\ejr{\rx}$, see~\eqref{equ:r-jet_ext}.

The \myemph{weak topology $\Wr{\infty}$} on $\Crm{\infty}{\Mman}{\Nman}$ is the topology generated by all topologies $\Wr{\rx}$ for $0\leq \rx < \infty$.

Finally, the \myemph{weak topology $\Wr{i}$} on $\Crm{\rx}{\Mman}{\Nman}$ for $i<\rx$ is the topology induced from the inclusion $\Crm{\rx}{\Mman}{\Nman} \subset \Crm{i}{\Mman}{\Nman}$. 
\end{definition}

The space $\Crm{\rx}{\Mman}{\Nman}$ with the topology $\Wr{i}$, $0\leq i\leq \rx$, will be denoted by $\WWW{\rx}{i}{\Mman}{\Nman}$.
If $i=\rx$, then we omit the index for the topology and thus write $\Crm{\rx}{\Mman}{\Nman}$ instead of $\WWW{\rx}{\rx}{\Mman}{\Nman}$.

Since $\Jr{i}{\Mman}{\Nman}$ is a $T_3$-space (in fact even a Hausdorff finite-dimensional manifold), it follows from Lemma~\ref{lm:cotop}\ref{enum:lm:cotop::subbase_prop:closure_KU} that $\Crm{0}{\Mman}{\Jr{i}{\Mman}{\Nman}}$ and therefore its subspace $\WWW{\rx}{i}{\Mman}{\Nman}$ is a $T_3$ space as well.
Moreover, as $\Wr{\infty}$ is generated by $T_3$ topologies $\Wr{\rx}$, $0\leq\rx<\infty$, one easily checks that $\Wr{\infty}$ is also a $T_3$ topology.

\begin{lemma}[Composition of jets]\label{lm:jets_composition}{\rm(\cite[Chapter~1, 1.4]{Michor:SMS:1980}, \cite[Proposition~8.3.2]{Mukherjee:DT:2015}, or~\cite[Lemma~9.6.2]{MargalefOuterelo:DT:1992} for manifolds with corners)}
Let $\Lman,\Mman,\Nman$ be smooth manifolds and $\rx<\infty$.
Then the following set
\begin{multline*}
\Jr{\rx}{\Lman}{\Mman} \times_{\Mman} \Jr{\rx}{\Mman}{\Nman} = \\ =
 \{
(\sigma, \tau) \in \Jr{\rx}{\Lman}{\Mman} \times \Jr{\rx}{\Mman}{\Nman} \mid
\dest{\rx}(\sigma) = \src{\rx}(\tau)
\}
\end{multline*}
is a real analytic submanifold of $\Jr{\rx}{\Lman}{\Mman} \times \Jr{\rx}{\Mman}{\Nman}$.
Moreover, the ``\myemph{composition of $\rx$-jets}'' map $\nu: \Jr{\rx}{\Lman}{\Mman} \times_{\Mman} \Jr{\rx}{\Mman}{\Nman} \to \Jr{\rx}{\Lman}{\Nman}$ 
\begin{gather*}
%\nu: \Jr{\rx}{\Lman}{\Mman} \times_{\Mman} \Jr{\rx}{\Mman}{\Nman} \to \Jr{\rx}{\Lman}{\Nman}, \\
\nu\bigl(
		\,\jr{\rx}{\func}{\ptm}, \, \jr{\rx}{\gfunc}{\func(\ptm)} \,
   \bigr) \,=\,
   \jr{\rx}{\gfunc\circ\func}{\ptm},
\end{gather*}
is continuous, in fact, even real analytic.
\end{lemma}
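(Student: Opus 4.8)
The plan is to work entirely in local charts and to reduce both assertions to the chain rule for truncated Taylor polynomials (Fa\`{a} di Bruno's formula). Throughout I would fix jets $\sigma = \jr{\rx}{\func}{\ptm} \in \Jr{\rx}{\Lman}{\Mman}$ and $\tau = \jr{\rx}{\gfunc}{\func(\ptm)} \in \Jr{\rx}{\Mman}{\Nman}$ with $\dest{\rx}(\sigma) = \func(\ptm) = \src{\rx}(\tau)$, together with charts around $\ptm \in \Lman$, around $\func(\ptm) \in \Mman$, and around $\gfunc(\func(\ptm)) \in \Nman$. In these charts a jet in $\Jr{\rx}{\Lman}{\Mman}$ is coordinatized by its source point together with all partial derivatives of a local representative up to order $\rx$, exactly as recorded in the explicit local formula for $\jr{\rx}{\dif}{}$ above; in this coordinatization $\dest{\rx}$ reads off the zeroth-order part (the value $\func(\ptm)$) and $\src{\rx}$ reads off the source point.

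First I would establish the submanifold claim. In the chosen charts the fiber product is the zero set of the real analytic map $(\sigma,\tau) \mapsto \dest{\rx}(\sigma) - \src{\rx}(\tau)$ into $\bR^m$, i.e.\ the condition that the value coordinate of $\sigma$ equals the source coordinate of $\tau$. Since $\src{\rx}$ and $\dest{\rx}$ are the base projections of locally trivial vector bundles (see the discussion of $\src{\rx}$ and $\dest{\rx}$ above), they are real analytic submersions; hence this equation expresses the source coordinate of $\tau$ as a real analytic function of the remaining coordinates, so locally the fiber product is the graph of a real analytic map and therefore a real analytic submanifold of $\Jr{\rx}{\Lman}{\Mman} \times \Jr{\rx}{\Mman}{\Nman}$ of codimension $m = \dim\Mman$. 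Equivalently, $\dest{\rx} \times \src{\rx}$ is transversal to the diagonal $\Delta_{\Mman} \subset \Mman \times \Mman$ and the fiber product is its transversal preimage.

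Next I would treat $\nu$. Represent $\sigma$ and $\tau$ by the Taylor polynomials $P_{\func}$ and $P_{\gfunc}$ of degree $\leq \rx$ of local representatives, centred at $\ptm$ and at $\func(\ptm)$ respectively, and define $\nu(\sigma,\tau)$ to be the truncation to degree $\leq \rx$ at $\ptm$ of the polynomial $P_{\gfunc} \circ P_{\func}$. By the chain rule, the partial derivatives of a composite $\gfunc \circ \func$ at $\ptm$ up to order $\rx$ are universal polynomial expressions in the partial derivatives of $\func$ at $\ptm$ and of $\gfunc$ at $\func(\ptm)$, each up to order $\rx$ only. This has two consequences simultaneously: the resulting jet depends only on $\sigma$ and $\tau$, so $\nu$ is well defined on the fiber product and independent of the chosen representatives; and, since these universal expressions are polynomial, the coordinate expression of $\nu$ is polynomial, hence real analytic, in the jet coordinates. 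Real analyticity immediately yields continuity, which is the asserted weaker property.

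The main obstacle is the combinatorial verification underlying the previous paragraph: one must confirm, via Fa\`{a} di Bruno's formula, that no derivative of order $> \rx$ of either factor enters the derivatives of order $\leq \rx$ of the composite, and that the dependence is genuinely polynomial rather than merely smooth. Once this is granted, the remaining points are routine: compatibility of the local formulas across overlapping charts follows from the already established well-definedness on jets, and corner points require no special treatment since all the maps involved are restrictions to ambient quadrants of real analytic maps and the chain rule is insensitive to the presence of corners.
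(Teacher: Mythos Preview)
Your proposal is correct and follows the standard route found in the cited references (\cite{Michor:SMS:1980}, \cite{Mukherjee:DT:2015}, \cite{MargalefOuterelo:DT:1992}): transversality of $\dest{\rx}\times\src{\rx}$ to the diagonal for the submanifold claim, and Fa\`{a} di Bruno in local jet coordinates for the polynomial (hence real analytic) nature of $\nu$. Note, however, that the paper itself does not supply a proof of this lemma --- it simply records it with attributions and moves on to use it in Corollary~\ref{cor:Wr_cont_composition} --- so there is nothing in the paper to compare your argument against beyond confirming that your approach matches the cited sources.
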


\begin{corollary}\label{cor:Wr_cont_composition}
{\rm(\cite[Chapter 2, \S4, Exercise~10]{Hirsch:DiffTop})}
The composition map
\[
\mult_{\rx}: \Crm{\rx}{\Lman}{\Mman} \times \Crm{\rx}{\Mman}{\Nman} \to \Crm{\rx}{\Lman}{\Nman}, \qquad
\mult_{\rx}(\func,\gfunc) = \gfunc\circ\func,
\]
is continuous.
\end{corollary}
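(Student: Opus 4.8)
The plan is to exploit the defining property of the weak topology $\Wr{\rx}$: for finite $\rx$ it is, by Definition~\ref{def:W0}, the \emph{initial} topology with respect to the $\rx$-jet prolongation $\ejr{\rx}:\Crm{\rx}{\Lman}{\Nman}\hookrightarrow\Crm{0}{\Lman}{\Jr{\rx}{\Lman}{\Nman}}$. Hence $\mult_{\rx}$ is continuous if and only if the composite $\ejr{\rx}\circ\mult_{\rx}$, sending $(\func,\gfunc)\mapsto\jr{\rx}{\gfunc\circ\func}{}$, is continuous into the compact-open topology of $\Crm{0}{\Lman}{\Jr{\rx}{\Lman}{\Nman}}$. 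I would first settle this finite case and then deduce the case $\rx=\infty$ from it.

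The computational heart is the pointwise factorization coming from Lemma~\ref{lm:jets_composition}: for every $\ptl\in\Lman$,
\[
\jr{\rx}{\gfunc\circ\func}{\ptl}=\nu\bigl(\jr{\rx}{\func}{\ptl},\,\jr{\rx}{\gfunc}{\func(\ptl)}\bigr).
\]
Thus $\ejr{\rx}\circ\mult_{\rx}=\indmap{\nu}\circ\Phi$, where $\Phi(\func,\gfunc)$ is the map $\ptl\mapsto\bigl(\jr{\rx}{\func}{\ptl},\jr{\rx}{\gfunc}{\func(\ptl)}\bigr)$ valued in the fibre product $\Jr{\rx}{\Lman}{\Mman}\times_{\Mman}\Jr{\rx}{\Mman}{\Nman}$. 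Since $\nu$ is continuous by Lemma~\ref{lm:jets_composition}, its induced post-composition $\indmap{\nu}$ is continuous by Lemma~\ref{lm:cotop}\ref{enum:lm:cotop::composition}, so it remains to prove continuity of $\Phi$.

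For $\Phi$ I would use Lemma~\ref{lm:cotop}\ref{enum:lm:cotop:map_into_product} (applicable because the jet manifolds are $T_3$), which identifies $\Crm{0}{\Lman}{\Jr{\rx}{\Lman}{\Mman}\times\Jr{\rx}{\Mman}{\Nman}}$ with the product of $\Crm{0}{\Lman}{\Jr{\rx}{\Lman}{\Mman}}$ and $\Crm{0}{\Lman}{\Jr{\rx}{\Mman}{\Nman}}$, reducing the continuity of $\Phi$ to that of its two components. The first component is $(\func,\gfunc)\mapsto\jr{\rx}{\func}{}=\ejr{\rx}(\func)$, continuous straight from the definition of $\Wr{\rx}$. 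The second component is $(\func,\gfunc)\mapsto\jr{\rx}{\gfunc}{}\circ\func$, which I would realize as the composition map $\mult$ of Lemma~\ref{lm:cotop}\ref{enum:lm:cotop::composition} — continuous because $\Mman$ is a (locally compact) manifold — applied to the pair consisting of the underlying continuous map $\func\in\Cont{\Lman}{\Mman}$ and the prolongation $\jr{\rx}{\gfunc}{}=\ejr{\rx}(\gfunc)\in\Cont{\Mman}{\Jr{\rx}{\Mman}{\Nman}}$; here the passage $\func\mapsto\func$ from $\Wr{\rx}$ to $\Wr{0}$ is continuous since it equals $\indmap{(\dest{\rx})}\circ\ejr{\rx}$. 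One then checks that $\Phi$ actually lands in the subspace $\Cont{\Lman}{\Jr{\rx}{\Lman}{\Mman}\times_{\Mman}\Jr{\rx}{\Mman}{\Nman}}$, because $\dest{\rx}(\jr{\rx}{\func}{\ptl})=\func(\ptl)=\src{\rx}(\jr{\rx}{\gfunc}{\func(\ptl)})$.

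Finally, for $\rx=\infty$ the topology $\Wr{\infty}$ on the target is generated by the topologies $\Wr{k}$, $k<\infty$, so it suffices to prove continuity of $\mult_{\infty}$ into each $(\Crm{\infty}{\Lman}{\Nman},\Wr{k})$; this composite factors through the continuous inclusions $\Crm{\infty}{\Lman}{\Mman}\hookrightarrow\Crm{k}{\Lman}{\Mman}$, $\Crm{\infty}{\Mman}{\Nman}\hookrightarrow\Crm{k}{\Mman}{\Nman}$ followed by $\mult_{k}$, already known to be continuous. The main obstacle I anticipate is purely bookkeeping — keeping straight on which factor the source/destination conditions defining the fibre product are verified, and confirming that the hypotheses of Lemma~\ref{lm:cotop}\ref{enum:lm:cotop::composition} and Lemma~\ref{lm:cotop}\ref{enum:lm:cotop:map_into_product} (local compactness of $\Mman$ and $\Lman$, and $T_3$-ness of the jet spaces) are genuinely in force — rather than any serious topological difficulty.
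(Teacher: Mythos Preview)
Your proposal is correct and follows essentially the same route as the paper's proof: both reduce to continuity of $(\func,\gfunc)\mapsto\jr{\rx}{\gfunc\circ\func}{}$ via the factorization through the jet-composition map $\nu$ of Lemma~\ref{lm:jets_composition}, assemble the map $\Phi$ (which the paper calls $\alpha$) from the two components $\jr{\rx}{\func}{}$ and $\jr{\rx}{\gfunc}{}\circ\func$ using Lemma~\ref{lm:cotop}\ref{enum:lm:cotop:map_into_product} and~\ref{enum:lm:cotop::composition}, and handle $\rx=\infty$ by reducing to the finite case. Your write-up is in fact slightly more explicit than the paper's about why the passage $\func\mapsto\func$ from $\Wr{\rx}$ to $\Wr{0}$ is continuous.
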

\begin{proof}
Such a statement is usually formulated and proved for strong topologies and $\Crm{\rx}{\Lman}{\Mman}$ is replaced with the space of proper maps $\mathrm{Prop}^{\rx}(\Lman,\Mman)$, see e.g.~\cite[Proposition~8.3.4]{Mukherjee:DT:2015}.
We will present a ``formal'' proof of continuity of $\mult_{\rx}$ just to illustrate several typical formal arguments based on Lemma~\ref{lm:jets_composition} and elementary properties of compact open topologies described in Lemma~\ref{lm:cotop}.
Such kind of arguments will appear several times later in the paper.

Due to the definition of topology $\Wr{\rx}$ it suffices to show that the map
\[ \Crm{\rx}{\Lman}{\Mman} \times \Crm{\rx}{\Mman}{\Nman} \to \Cont{\Lman}{\Jr{\rx}{\Lman}{\Nman}},
\qquad
\mult_{\rx}'(\func,\gfunc)(\ptl) = \jr{r}{\gfunc\circ\func}{\ptl},
\]
is continuous.
Consider the following composition of continuous maps:
\begin{align*}
&\alpha: \ \Crm{\rx}{\Lman}{\Mman} \times \Crm{\rx}{\Mman}{\Nman}
\xrightarrow{(\func,\gfunc) \ \mapsto \ (\jr{\rx}{\func}{},\ \func, \ \jr{\rx}{\gfunc}{})} \\
% %%%%%%%%%%%%%%%%%%%%%%
&\quad\to\
\Cont{\Lman}{ \Jr{\rx}{\Lman}{\Mman} }  \times
\Cont{\Lman}{\Mman}  \times
\Cont{\Mman}{ \Jr{\rx}{\Mman}{\Nman} }
\xrightarrow[\text{Lemma~\ref{lm:cotop}\ref{enum:lm:cotop::composition}}]{(\sigma,\func,\tau) \ \mapsto \ (\sigma, \ \tau\circ\func)} \\
%%%%%%%%%%%%%%%%%
&\quad\to \
\Cont{\Lman}{ \Jr{\rx}{\Lman}{\Mman} }  \times
\Cont{\Lman}{ \Jr{\rx}{\Mman}{\Nman} } \xrightarrow[\text{Lemma~\ref{lm:cotop}\ref{enum:lm:cotop:map_into_product}}]{\cong} \\
% %%%%%%%%%%%%%%%%%
&\quad\to \
\Cont{\Lman}{ \Jr{\rx}{\Lman}{\Mman} \times \Jr{\rx}{\Mman}{\Nman} }.
% %%%%%%%%%%%%%%%%%
\end{align*}
Notice that
\begin{multline*}
	\alpha(\func,\gfunc)(\ptl) = \bigl( \jr{\rx}{\func}{\ptl},  \jr{\rx}{\gfunc}{\func(\ptl)}) \in \Jr{\rx}{\Lman}{\Mman} \times_{\Mman} \Jr{\rx}{\Mman}{\Nman} \ \subset \\
	\subset \ \Jr{\rx}{\Lman}{\Mman} \times \Jr{\rx}{\Mman}{\Nman},
\end{multline*}
and $\nu\circ\alpha(\func,\gfunc) = \jr{\rx}{\gfunc\circ\func}{} = \mult'(\nu\circ\alpha)$.
Therefore $\alpha$ can be regarded as a continuous map
\[
\alpha: \Crm{\rx}{\Lman}{\Mman} \times \Crm{\rx}{\Mman}{\Nman}  \to
\Cont{\Lman}{\Jr{\rx}{\Lman}{\Mman} \times_{\Mman} \Jr{\rx}{\Mman}{\Nman}},
\]
so that
\begin{multline*}
\mult_{\rx}' = \indmap{\nu}\circ\alpha:
\Crm{\rx}{\Lman}{\Mman} \times \Crm{\rx}{\Mman}{\Nman}
	\xrightarrow{\alpha} \\ \to
\Cont{\Lman}{\Jr{\rx}{\Lman}{\Mman} \times_{\Mman} \Jr{\rx}{\Mman}{\Nman}}
	\xrightarrow{\indmap{\nu}} \Cont{\Lman}{\Jr{\rx}{\Lman}{\Nman}}.
\end{multline*}
Hence $\mult_{\rx}'$ and therefore $\mult_{\rx}$ are continuous for $\rx<\infty$.

Let $\rx=\infty$.
Then for each $\sx<\infty$ the map
\[
	\mult_{\infty}:\Ci{\Lman}{\Mman} \times \Ci{\Mman}{\Nman} \xrightarrow{\id}
	\WWW{\infty}{\sx}{\Lman}{\Mman}  \times \WWW{\infty}{\sx}{\Mman}{\Nman}
	\xrightarrow{\mult_{\sx}}
	\WWW{\infty}{\sx}{\Lman}{\Mman}
\]
is continuous into the topology $\Wr{\sx}$ of $\Ci{\Lman}{\Nman}$.
Since the topology $\Wr{\infty}$ on $\Ci{\Lman}{\Nman}$ is generated by all the topologies $\Wr{\sx}$ for $\sx < \infty$, we obtain that $\mult_{\infty}$ is also continuous into the topology $\Wr{\infty}$ of $\Ci{\Lman}{\Nman}$.
\end{proof}

\section{Jets of maps from direct products}\label{sect:maps_from_prods}
Let $\Lman$ be a smooth manifold (possibly with corners) of dimension $l$.
We will now describe in terms of jets partial derivatives along $\Mman$ of $\Cr{\rx}$ maps $\Lman\times\Mman \to\Nman$.

Let $(\ptl,\ptm)\in\Lman\times\Mman$, and $\func,\gfunc:\Lman\times\Mman \to\Nman$ be two germs of maps at $(\ptl,\ptm)$.
Say that $\func$ and $\gfunc$ are \myemph{$\rx$-equivalent along $\Mman$ at $(\ptl,\ptm)$} whenever the restriction maps $\func,\gfunc:\ptl\times\Mman\to\Nman$ are $\Cr{\rx}$ and those maps are also $\rx$-equivalent at $x$.
The corresponding equivalence class will be denoted by $\jrm{\rx}{\Mman}{\func}(\ptl,\ptm) := \jr{\rx}{\func|_{\ptl\times\Mman}}{\ptm}$.

Evidently, if two germs $\func,\gfunc:\Lman\times\Mman \to\Nman$ are $\rx$-equivalent at a point $(\ptl,\ptm)$, then they also $\rx$-equivalent along $\Mman$.
This means that there is a natural $\Cinfty$ map
\[
	\xi: \Jr{\rx}{\Lman \times\Mman}{\Nman} \to \Lman \times \Jr{\rx}{\Mman}{\Nman},
	\qquad
	\xi\bigl(\jr{\rx}{\func}{\ptl,\ptm}\bigr) = \bigl(\ptl, \jrm{\rx}{\Mman}{\func}(\ptl,\ptm)\bigr).
\]

Notice that there is also the inverse $\Cinfty$ map
\[
	\sigma: \Lman \times \Jr{\rx}{\Mman}{\Nman} \to \Jr{\rx}{\Lman \times\Mman}{\Nman},
	\qquad
	\sigma\bigl(\ptl, \jrm{\rx}{\Mman}{\func}(\ptl,\ptm)\bigr) = \jr{\rx}{\func\circ p_{\Mman}}{\ptl,\ptm},
\]
where $p_{\Mman}:\Lman\times\Mman \to \Mman$ is the natural projection.
Then $\sigma\circ\xi=\id_{\Lman \times \Jr{\rx}{\Mman}{\Nman}}$, so $\Lman \times \Jr{\rx}{\Mman}{\Nman}$ can be viewed as a retract of $\Jr{\rx}{\Lman \times\Mman}{\Nman}$.

Moreover, we also have the following commutative diagram:
\begin{equation}\label{equ:jets_along_M}
\begin{gathered}
\xymatrix@C=1em{
\Lman \times \Jr{\rx}{\Mman}{\Nman}
	\ar@{^(->}[rr]^{\sigma}
	\ar[rrd]_-{\id_{\Lman} \times \src{\rx}}  &&
\Jr{\rx}{\Lman \times\Mman}{\Nman}
    \ar@{->>}[rr]^{\xi}
	\ar[d]^-{\src{\rx}'}  &&
\Lman \times \Jr{\rx}{\Mman}{\Nman}
	\ar[lld]^-{\id_{\Lman} \times \src{\rx}} \\
%%%%%%%%%%
&&\Lman\times\Mman
}
\end{gathered}
\end{equation}
where $\src{\rx}:\Jr{\rx}{\Mman}{\Nman}\to\Mman$ and $\src{\rx}':\Jr{\rx}{\Lman\times\Mman}{\Nman}\to\Lman\times\Mman$ are the corresponding $\rx$-jet bundles.
In particular, all down arrows are fibre bundles.

By definition, the topology $\Wr{\rx}$ on $\Crm{\rx}{\Lman\times\Mman}{\Nman}$ is obtained from the identification of that space with a subset of the space of sections of the middle arrow in~\eqref{equ:jets_along_M} by the $\rx$-jet prolongation map~\eqref{equ:r-jet_ext}.
We will need to consider continuous maps $\Lman\times\Mman\to\Nman$ which induce sections of the left arrow in~\eqref{equ:jets_along_M}.
Such maps are studied in many papers and in different variations, e.g.~\cite{Amann:ODE:1990, Glockner:JFA:2002, AlzaareerSchmeding:EM:2015}.
Usually they are defined only for maps into locally convex vector spaces in order to develop differential calculus in those space.
For our purposes we need a ``nonlinear'' variant for maps into a manifold.

\begin{definition}\label{def:part_Cr}
{\rm(e.g.~\cite[II.6, page 91]{Amann:ODE:1990}, \cite[\S1.4]{Glockner:JFA:2002}, \cite[Definition 3.1]{AlzaareerSchmeding:EM:2015}).}
Let $\Lman$ be a topological space, $\Uman\subset\Lman \times \Mman$ an open subset, and $0\leq \rx < \infty$.
A continuous map $\func:\Uman \to \Nman$ is \myemph{$\Cr{0,\rx}$ along $\Mman$}, or simply $\Cr{0,\rx}$ (if the decomposition of the topological space $\Lman\times\Mman$ into a direct product is understood from the context), whenever
\begin{enumerate}[leftmargin=5ex, label={\rm(\roman*)}]
\item\label{enum:ft:ft_Cr}
for each $\ptl\in\Lman$ the restriction $\func|_{\Uman\cap(\ptl\times\Mman)}:\Uman\cap(\ptl\times\Mman) \to \Nman$ is $\Cr{\rx}$;
\item\label{enum:ft:df_Cs}
the map $\jetM{\rx}{\Mman}: \Uman \to \Jr{\rx}{\Mman}{\Nman}$ defined by $\jrm{\rx}{\Mman}{\func}(\ptl,\ptm) = \jr{\rx}{\func|_{\ptl\times\Mman}}{\ptm}$ is continuous;
\end{enumerate}
That map $\func:\Uman \to \Nman$ is $\Cr{0,\infty}$ (along $\Mman$) if it is $\Cr{0,\rx}$ along $\Mman$ for all $\rx<\infty$.
Denote by $\Crm{0,\rx}{\Uman}{\Nman}$, $0\leq\rx\leq\infty$, the subset of $\Crm{0}{\Lman\times\Mman}{\Nman}$ consisting of $\Cr{0,\rx}$ maps.
\end{definition}

\begin{remark}\rm
Let $(\ptl,\ptm)\in\Lman \times \Mman$.
Choose local coordinates local coordinates $(\ptm_1,\ldots,\ptm_m)$ in $\Mman$ near $x$, and $(y_1,\ldots,y_n)$ in $\Nman$ near $\func(\ptl,\ptm)$, so $\func$ can be regarded as a germ of a map $\func=(\func_1,\ldots,\func_n):\Lman\times\cone{m}{\Lambda}\to\cone{n}{\Lambda'}$.
Then condition~\ref{enum:ft:df_Cs} means that $\func$ and each partial derivative up to order $\rx$ in $(\ptm_1,\ldots,\ptm_m)$ of each coordinate function of $\func$ is continuous in $(\ptl,\ptm)$.
However, if $\Lman$ is a smooth manifold, then $\func$ is not necessarily even differentiable in $\Lman$.
\end{remark}

\begin{example}\label{exmp:C0r_func}\rm
Consider the function
\[ \func:\bR\times\bR\to\bR, \qquad \func(\ptl,\ptm) = |\ptl|\cos(\ptl \ptm + \ptm^2).\]
Then $\func$ is $\Cr{0,\infty}$, since it is $\Cinfty$ in $\ptm$ and each of its partial derivatives in $\ptm$ is continuous in $(\ptl,\ptm)$.
On the other hand, $\func$ is only continuous but not even $\Cr{1}$ in $(\ptl,\ptm)$ due to the presence of the multiple $|\ptl|$ is the formula for $\func$.
\end{example}

The following simple statement is not explicitly formulated in the above works~\cite{Amann:ODE:1990, Glockner:JFA:2002, AlzaareerSchmeding:EM:2015}, though it gives a very important and useful characterization of $\Cr{0,\rx}$ property in terms of continuity of adjoint maps into compact open topologies.

\begin{lemma}[Characterization of $\Cr{0,\rx}$ maps]\label{lm:exp_law_finite}
Let $0\leq \rx < \infty$, $\Lman$ be a topological space, and $\func:\Lman\to\Crm{\rx}{\Mman}{\Nman}$ be a map.
Then the following conditions are equivalent.
\begin{enumerate}[leftmargin=*, label={\rm(\arabic*)}]
\item\label{enum:exp_law:f}
$\func:\Lman\to\Crm{\rx}{\Mman}{\Nman}$ is continuous into the topology $\Wr{\rx}$ of $\Crm{\rx}{\Mman}{\Nman}$.

\item\label{enum:exp_law:ejf}
The composition map
\[
\ejr{\rx}\circ \func:
    \Lman
	    \ \xrightarrow{~\func~} \
	\Crm{\rx}{\Mman}{\Nman}
		\ \xrightarrow{~\ejr{\rx}~} \
	\Crm{0}{\Mman}{\Jr{\rx}{\Mman}{\Nman}}
\]
is continuous into the topology $\Wr{0}$ of $\Crm{0}{\Mman}{\Jr{\rx}{\Mman}{\Nman}}$.

\item\label{enum:exp_law:F}
The map $\hat{\Func}:\Lman\times\Mman\to\Jr{\rx}{\Mman}{\Nman}$ defined by $\hat{\Func}(\ptl,\ptm) = j^r(f(\ptl))(x)$ is continuous as a map from the standard product topology of $\Lman\times\Mman$.

\item\label{enum:exp_law:loc_coords}
Let $\Func =( \Func_1,\ldots,\Func_n): \Lman\times\cone{m}{\Lambda} \ \supset \ \Lman\times \Vman \to \cone{n}{\Lambda'}$ be any local presentation of the map $F:\Lman\times\Mman \to\Nman$, $\Func(\ptl,\ptm) = \func(\ptl)(\ptm)$, where $\Vman\subset\cone{m}{\Lambda}$ is an open set.
Then each partial derivative
\[
\frac{\partial^i\Func_j}{\partial\ptm_{a_1}\cdots\partial\ptm_{a_i}}:\Lman \times \Vman \to \bR
\]
in $(\ptm_1,\ldots,\ptm_m)\in\Vman$ up to order $\rx$ of each coordinate function $\Func_i$ of $\Func$ is a continuous function on $\Lman \times \Vman$.

\item\label{enum:exp_law:FT}
The adjoint map $F:\Lman\times\Mman \to\Nman$, $\Func(s,x) = \func(s)(x)$, is $\Cr{0,\rx}$.
\end{enumerate}
\end{lemma}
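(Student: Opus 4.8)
The plan is to establish the listed equivalences by reducing each condition to the single assertion that the $\rx$-jet prolongation of $\func$ depends continuously on the parameter, encoded in condition~\ref{enum:exp_law:F}. Concretely, I would prove \ref{enum:exp_law:f}$\Leftrightarrow$\ref{enum:exp_law:ejf} straight from the definition of the topology $\Wr{\rx}$, then \ref{enum:exp_law:ejf}$\Leftrightarrow$\ref{enum:exp_law:F} from the exponential law, then \ref{enum:exp_law:F}$\Leftrightarrow$\ref{enum:exp_law:loc_coords} by passing to charts on the finite-dimensional jet manifold, and finally \ref{enum:exp_law:F}$\Leftrightarrow$\ref{enum:exp_law:FT} by unwinding Definition~\ref{def:part_Cr}. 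The observation that glues the first three together is that $\func$, $\ejr{\rx}\circ\func$, and $\hat{\Func}$ all carry the same data: by definition $(\ejr{\rx}\circ\func)(\ptl)(\ptm)=\jr{\rx}{\func(\ptl)}{\ptm}=\hat{\Func}(\ptl,\ptm)$, so that $\hat{\Func}$ is precisely the adjoint map $\EXP^{-1}(\ejr{\rx}\circ\func)$.

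For \ref{enum:exp_law:f}$\Leftrightarrow$\ref{enum:exp_law:ejf}, recall from Definition~\ref{def:W0} that $\Wr{\rx}$ is the topology induced on $\Crm{\rx}{\Mman}{\Nman}$ by the jet prolongation embedding $\ejr{\rx}$ into $\Crm{0}{\Mman}{\Jr{\rx}{\Mman}{\Nman}}$; a map into a space carrying such an initial topology is continuous precisely when its composite with the structural embedding is continuous, which is exactly the asserted equivalence. For \ref{enum:exp_law:ejf}$\Leftrightarrow$\ref{enum:exp_law:F}, note that $\Mman$, being a manifold possibly with corners, is locally compact, while $\Jr{\rx}{\Mman}{\Nman}$ is a finite-dimensional manifold. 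Applying Lemma~\ref{lm:exp_law}\ref{enum:exp_law:1} and~\ref{enum:exp_law:3} to the triple $(\Lman,\Mman,\Jr{\rx}{\Mman}{\Nman})$, the exponential map $\EXP$ is a continuous bijection from $\Crm{0}{\Lman\times\Mman}{\Jr{\rx}{\Mman}{\Nman}}$ onto $\Cont{\Lman}{\Crm{0}{\Mman}{\Jr{\rx}{\Mman}{\Nman}}}$; hence $\ejr{\rx}\circ\func$ is continuous if and only if its adjoint $\hat{\Func}=\EXP^{-1}(\ejr{\rx}\circ\func)$ is continuous on the product $\Lman\times\Mman$.

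The remaining two equivalences are local and definitional. For \ref{enum:exp_law:F}$\Leftrightarrow$\ref{enum:exp_law:loc_coords}, I would fix charts on $\Mman$ and $\Nman$ and invoke the explicit local formula for the jet prolongation from Section~\ref{sect:whitney_topologies}: in such charts $\hat{\Func}$ takes values in a coordinate neighbourhood $\Vman\times\bR^{N}$ of $\Jr{\rx}{\Mman}{\Nman}$, with component functions the base coordinate together with the partial derivatives $\partial^{i}\Func_{j}/\partial\ptm_{a_{1}}\cdots\partial\ptm_{a_{i}}$ for $i\leq \rx$. Since a map into a finite-dimensional manifold is continuous if and only if all of its coordinate functions are, condition~\ref{enum:exp_law:F} holds exactly when every such partial derivative is jointly continuous on $\Lman\times\Vman$, which is condition~\ref{enum:exp_law:loc_coords}. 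For \ref{enum:exp_law:F}$\Leftrightarrow$\ref{enum:exp_law:FT}, observe that the standing hypothesis $\func(\ptl)\in\Crm{\rx}{\Mman}{\Nman}$ forces each restriction $\Func|_{\ptl\times\Mman}=\func(\ptl)$ to be $\Cr{\rx}$, so requirement~\ref{enum:ft:ft_Cr} of Definition~\ref{def:part_Cr} is automatic; the sole remaining condition for $\Func$ to be $\Cr{0,\rx}$ is then requirement~\ref{enum:ft:df_Cs}, i.e.\ continuity of $\jetM{\rx}{\Mman}=\hat{\Func}$, which is precisely condition~\ref{enum:exp_law:F}.

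I do not expect a genuine obstacle, as every step merely transcribes the same continuity statement between four descriptions. The two points deserving care are: first, confirming that the adjoint of $\ejr{\rx}\circ\func$ is literally the map $\hat{\Func}$ of condition~\ref{enum:exp_law:F}, so that the exponential law applies without reindexing; and second, checking that local compactness of $\Mman$ alone suffices to invoke Lemma~\ref{lm:exp_law}\ref{enum:exp_law:3}. In particular, we never need the $T_3$-hypothesis on $\Lman$ appearing in Lemma~\ref{lm:exp_law}\ref{enum:exp_law:4}, because we only compare continuity of individual maps rather than the ambient topologies on the two mapping spaces.
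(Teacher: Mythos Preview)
Your proposal is correct and follows essentially the same approach as the paper: the paper likewise derives \ref{enum:exp_law:f}$\Leftrightarrow$\ref{enum:exp_law:ejf} and \ref{enum:exp_law:F}$\Leftrightarrow$\ref{enum:exp_law:FT} from the definitions, bridges \ref{enum:exp_law:ejf}$\Leftrightarrow$\ref{enum:exp_law:F} via Lemma~\ref{lm:exp_law}\ref{enum:exp_law:1},\ref{enum:exp_law:3} using local compactness of $\Mman$, and handles \ref{enum:exp_law:F}$\Leftrightarrow$\ref{enum:exp_law:loc_coords} by writing out the local coordinate presentation of $\hat\Func$. Your decomposition is in fact slightly cleaner in naming the adjunction $\hat\Func=\EXP^{-1}(\ejr{\rx}\circ\func)$ explicitly.
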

\begin{proof}
The equivalences \ref{enum:exp_law:f}$\Leftrightarrow$\ref{enum:exp_law:ejf} and \ref{enum:exp_law:F}$\Leftrightarrow$\ref{enum:exp_law:FT} follow from the definitions of the topology $\Wr{\rx}$ and property $\Cr{0,\rx}$ respectively.

Also by Lemma~\ref{lm:exp_law}\ref{enum:exp_law:1}, we have the implication \ref{enum:exp_law:FT}$\Rightarrow$\ref{enum:exp_law:F}, and its inverse \ref{enum:exp_law:F}$\Rightarrow$\ref{enum:exp_law:FT} holds by Lemma~\ref{lm:exp_law}\ref{enum:exp_law:3} and local compactness of $\Mman$.

\ref{enum:exp_law:F}$\Leftrightarrow$\ref{enum:exp_law:loc_coords}.
Notice that continuity of $\hat{F}$, that is condition~\ref{enum:exp_law:F}, is equivalent to continuity of each of its local presentations.
Moreover, given a local presentation
\[
	(\Func_1,\ldots,\Func_n): \Lman \times \Vman \to \bR^n
\]
of $\Func$ one can define a local presentation of
\[
	\hat{\Func}:\Lman\times\Vman\to\bR^{n(1+m + m^2 +\cdots + m^r)}
\]
of $\hat{\Func}$ by
\[
\hat{\Func}(\ptl,\ptm) =
	\left(\ptm,
		\left\{
			\tfrac{\partial^i\Func_j(\ptl,\ptm)}{\partial\ptm_{a_1}\cdots\partial\ptm_{a_i}}
		\right\}_{a_1,\ldots,a_i=1,\ldots,m}^{ i=0,\ldots,\rx, \ j=1,\ldots,n} \right).
\]
Then $\hat{\Func}$ is continuous iff so is every of its coordinate functions being a partial derivatives up to order $\rx$ of coordinate functions of $\Func$, that is when condition~\ref{enum:exp_law:loc_coords} holds.
\end{proof}

%%%%%%% Characterization of C^{0,\infty} maps
\begin{corollary}[Characterization of $\Cr{0,\infty}$ maps]\label{cor:exp_law_inf}
Let $\Lman$ be a topological space, and $\func:\Lman\to\Ci{\Mman}{\Nman}$ be a map.
Then the following conditions are equivalent.
\begin{enumerate}[leftmargin=*, label={\rm(\arabic*)}]
\item\label{enum:exp_law:inf:f}
$\func:\Lman\to\Ci{\Mman}{\Nman}$ is continuous into the topology $\Wr{\infty}$ of $\Ci{\Mman}{\Nman}$.

\item\label{enum:exp_law:inf:FT}
The map $F:\Lman\times\Mman \to\Nman$, defined by $\Func(s,x) = \func(s)(x)$, is $\Cr{0,\infty}$.

\item\label{enum:exp_law:inf:loc_coords}
Let $\Func =( \Func_1,\ldots,\Func_n): \Lman\times\cone{m}{\Lambda} \ \supset \ \Lman\times \Vman \to \cone{n}{\Lambda'}$ be any local presentation of the map $\Func:\Lman\times\Mman \to\Nman$, $\Func(\ptl,\ptm) = \func(\ptl)(\ptm)$, where $\Vman\subset\cone{m}{\Lambda'}$ is an open set.
Then each partial derivative $\frac{\partial^i\Func_j}{\partial\ptm_{a_1}\cdots\partial\ptm_{a_i}}:\Lman \times \Vman \to \bR$ in $(\ptm_1,\ldots,\ptm_m)\in\Vman$ of each order of each coordinate function $\Func_i$ of $\Func$ is a continuous function on $\Lman \times \Vman$.
\qed
\end{enumerate}
\end{corollary}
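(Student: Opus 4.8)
The plan is to deduce the corollary directly from the finite-order characterization in Lemma~\ref{lm:exp_law_finite}, exploiting that both the topology $\Wr{\infty}$ and the class $\Cr{0,\infty}$ are, by definition, assembled from their finite-order counterparts. Recall from Definition~\ref{def:W0} that $\Wr{\infty}$ on $\Ci{\Mman}{\Nman}$ is generated by all the topologies $\Wr{\rx}$, $0\leq\rx<\infty$, and from Definition~\ref{def:part_Cr} that $\Func$ is $\Cr{0,\infty}$ precisely when it is $\Cr{0,\rx}$ for every finite $\rx$. Thus the strategy is to prove the equivalences ``one finite order at a time'' and then take the conjunction over all $\rx<\infty$.

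First I would establish \ref{enum:exp_law:inf:f}$\Leftrightarrow$\ref{enum:exp_law:inf:FT}. The key elementary observation is that, since $\Wr{\infty}$ is the join (least upper bound) of the topologies $\Wr{\rx}$, a map $\func:\Lman\to(\Ci{\Mman}{\Nman},\Wr{\infty})$ is continuous if and only if it is continuous into $(\Ci{\Mman}{\Nman},\Wr{\rx})$ for every finite $\rx$: the union of subbases for the $\Wr{\rx}$ is a subbase for $\Wr{\infty}$, so continuity may be tested on each $\Wr{\rx}$ separately. Since on $\Ci{\Mman}{\Nman}$ the topology $\Wr{\rx}$ is, by Definition~\ref{def:W0}, the one induced from $\Ci{\Mman}{\Nman}\subset\Crm{\rx}{\Mman}{\Nman}$, this is the same as continuity into $\Wr{\rx}$ of the composite $\Lman\to\Crm{\rx}{\Mman}{\Nman}$ obtained by following $\func$ with that inclusion. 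By Lemma~\ref{lm:exp_law_finite}, equivalence \ref{enum:exp_law:f}$\Leftrightarrow$\ref{enum:exp_law:FT}, this last condition holds for a given $\rx$ if and only if $\Func$ is $\Cr{0,\rx}$. Conjoining over all finite $\rx$ and invoking the definition of $\Cr{0,\infty}$ yields \ref{enum:exp_law:inf:f}$\Leftrightarrow$\ref{enum:exp_law:inf:FT}.

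Next I would establish \ref{enum:exp_law:inf:FT}$\Leftrightarrow$\ref{enum:exp_law:inf:loc_coords} in the same fashion. For each finite $\rx$, Lemma~\ref{lm:exp_law_finite}, equivalence \ref{enum:exp_law:FT}$\Leftrightarrow$\ref{enum:exp_law:loc_coords}, asserts that $\Func$ is $\Cr{0,\rx}$ if and only if every partial derivative of order at most $\rx$ in the $\Mman$-variables of every coordinate function of a local presentation of $\Func$ is continuous on $\Lman\times\Vman$. Taking the conjunction over all $\rx<\infty$, the left-hand side becomes ``$\Func$ is $\Cr{0,\infty}$'' while the right-hand side becomes ``every partial derivative of every order in the $\Mman$-variables is continuous'', which is exactly condition~\ref{enum:exp_law:inf:loc_coords}.

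I do not expect a genuine obstacle: the argument is purely formal once Lemma~\ref{lm:exp_law_finite} is in hand. The only point deserving care is the topological fact that a map into the join of a family of topologies is continuous if and only if it is continuous into each member; I would state this explicitly to avoid silently conflating continuity into $\Wr{\infty}$ with the family of finite-order continuity conditions. Everything else is bookkeeping over the index $\rx$, together with the two definitions that reduce the infinite-order notions to their finite-order versions.
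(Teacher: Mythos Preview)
Your proposal is correct and is precisely the intended argument: the paper states the corollary with a \qed and no separate proof, relying on exactly the reduction you describe---passing from the finite-$\rx$ equivalences of Lemma~\ref{lm:exp_law_finite} to the $\rx=\infty$ case via the definitions of $\Wr{\infty}$ and $\Cr{0,\infty}$ as conjunctions over all finite orders. Your explicit remark about continuity into a join of topologies is a welcome clarification but not an additional ingredient beyond what the paper tacitly assumes.
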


\subsection*{Topology $\Wr{0,\rx}$.}
Let $\Lman$ be a topological space.
Notice that the map
\[
	\jetM{\rx}{\Mman}:  \Crm{0,\rx}{\Lman\times\Mman}{\Nman} 	\to  \Cont{\Lman\times\Mman}{\Jr{\rx}{\Mman}{\Nman}},
	\qquad
	\func \mapsto \jrm{\rx}{\Mman}{\func},
\]
is injective.
Endow $\Crm{0}{\Lman\times\Mman}{\Jr{\rx}{\Mman}{\Nman}}$ with topology $\Wr{0}$.
Then the induces topology on $\Crm{0,\rx}{\Lman\times\Mman}{\Nman}$ will be denoted by $\Wr{0,\rx}$.

\begin{lemma}\label{lm:exp_law_C0r}
Let $\Lman$ be a $T_3$ topological space and $0\leq \rx\leq \infty$.
Then the exponential map
$\EXP = \EXP_{\Lman,\Mman,\Nman}:\Crm{0,\rx}{\Lman\times\Mman}{\Nman} \to \Cont{\Lman}{\Crm{\rx}{\Mman}{\Nman}}$
\begin{equation}\label{equ:exp_law_C0r}
%	\EXP &= \EXP_{\Lman,\Mman,\Nman}:\Crm{0,\rx}{\Lman\times\Mman}{\Nman} \to \Cont{\Lman}{\Crm{\rx}{\Mman}{\Nman}},
	\EXP(\Func)(\ptl)(x) = \Func(\ptl,\ptm),
\end{equation}
is a homeomorphism.
\end{lemma}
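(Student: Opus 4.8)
The plan is to reduce the statement to the unparametrized exponential law of Lemma~\ref{lm:exp_law}, applied with the target manifold $\Nman$ replaced by the jet space $\Jr{\rx}{\Mman}{\Nman}$, and then transport the resulting homeomorphism back along the two jet-prolongation embeddings that \emph{define} the topologies $\Wr{\rx}$ and $\Wr{0,\rx}$. First I would treat the case $\rx<\infty$. At the level of sets $\EXP$ is a bijection onto $\Cont{\Lman}{\Crm{\rx}{\Mman}{\Nman}}$: if $\Func$ is $\Cr{0,\rx}$ then $\EXP(\Func):\Lman\to\Crm{\rx}{\Mman}{\Nman}$ is continuous into $\Wr{\rx}$ by the equivalence \ref{enum:exp_law:FT}$\Leftrightarrow$\ref{enum:exp_law:f} of Lemma~\ref{lm:exp_law_finite}, and conversely. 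The heart of the argument is the commutative square
\[
\xymatrix@C=3em{
\Crm{0,\rx}{\Lman\times\Mman}{\Nman}
  \ar[r]^-{\EXP}
  \ar@{^(->}[d]_-{\jetM{\rx}{\Mman}} &
\Cont{\Lman}{\Crm{\rx}{\Mman}{\Nman}}
  \ar@{^(->}[d]^-{\indmap{(\ejr{\rx})}} \\
\Cont{\Lman\times\Mman}{\Jr{\rx}{\Mman}{\Nman}}
  \ar[r]^-{\EXP'}_-{\cong} &
\Cont{\Lman}{\Cont{\Mman}{\Jr{\rx}{\Mman}{\Nman}}}
}
\]
whose commutativity is the identity $\Func|_{\ptl\times\Mman}=\EXP(\Func)(\ptl)$ read off at the level of $\rx$-jets, and where $\EXP'=\EXP_{\Lman,\Mman,\Jr{\rx}{\Mman}{\Nman}}$.

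In this square the left vertical map is, by the very definition of the topology $\Wr{0,\rx}$, a topological embedding; the bottom map $\EXP'$ is a \emph{homeomorphism} by Lemma~\ref{lm:exp_law}\ref{enum:exp_law:4}, using that the manifold $\Mman$ is locally compact and $\Lman$ is $T_3$; and the right vertical map $\indmap{(\ejr{\rx})}$ is continuous by Lemma~\ref{lm:cotop}\ref{enum:lm:cotop::composition}. To run the diagram chase I need $\indmap{(\ejr{\rx})}$ to be a topological \emph{embedding}, not merely continuous. This I would obtain from the elementary fact that the functor $\Cont{\Lman}{-}$ preserves subspace topologies: for a subspace $\Yman\subset\Zman$ one has $\WNBH{0}{\Kman}{\Wman}\cap\Cont{\Lman}{\Yman}=\WNBH{0}{\Kman}{\Wman\cap\Yman}$, so the compact open topology of $\Cont{\Lman}{\Yman}$ is induced from $\Cont{\Lman}{\Zman}$. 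As $\ejr{\rx}$ is an embedding by the definition of $\Wr{\rx}$, so is $\indmap{(\ejr{\rx})}$.

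Granting this, continuity of $\EXP$ follows because $\indmap{(\ejr{\rx})}\circ\EXP=\EXP'\circ\jetM{\rx}{\Mman}$ is continuous while $\indmap{(\ejr{\rx})}$ is an embedding; and continuity of $\EXP^{-1}$ follows symmetrically from $\jetM{\rx}{\Mman}\circ\EXP^{-1}=(\EXP')^{-1}\circ\indmap{(\ejr{\rx})}$, using that $\jetM{\rx}{\Mman}$ is an embedding and $\EXP'$ a homeomorphism. These are exactly the formal manipulations already rehearsed in the proof of Corollary~\ref{cor:Wr_cont_composition}, so this part is routine once the embedding statement is in hand.

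Finally I would deduce the case $\rx=\infty$ from the finite ones. Here $\EXP$ is a set bijection between $\Crm{0,\infty}{\Lman\times\Mman}{\Nman}$ and $\Cont{\Lman}{\Ci{\Mman}{\Nman}}$ by Corollary~\ref{cor:exp_law_inf}, and both $\Wr{0,\infty}$ and $\Wr{\infty}$ are the suprema of the finite-order topologies $\Wr{0,\sx}$ and $\Wr{\sx}$, $\sx<\infty$. The one genuinely new point is that $\Cont{\Lman}{-}$ commutes with these suprema: embedding $(\Ci{\Mman}{\Nman},\Wr{\infty})$ diagonally into $\prod_{\sx}(\Ci{\Mman}{\Nman},\Wr{\sx})$ and applying Lemma~\ref{lm:cotop}\ref{enum:lm:cotop:map_into_product} — legitimate since each factor $\WWW{\infty}{\sx}{\Mman}{\Nman}$ is $T_3$ — identifies $\Cont{\Lman}{(\Ci{\Mman}{\Nman},\Wr{\infty})}$ with $\sup_{\sx}\Cont{\Lman}{(\Ci{\Mman}{\Nman},\Wr{\sx})}$, and likewise on the source side. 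Since a bijection that is a homeomorphism for each member of a family of topologies is a homeomorphism for their suprema, the finite-order homeomorphisms assemble into the desired one. I expect this last reduction, namely checking that the compact open functor commutes with the $T_3$ initial topologies defining the $\Cr{\infty}$ case, to be the most delicate bookkeeping; the essential analytic content is already carried by Lemmas~\ref{lm:exp_law} and~\ref{lm:exp_law_finite}.
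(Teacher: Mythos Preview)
Your argument is correct and follows essentially the same route as the paper: the same commutative square with $\jetM{\rx}{\Mman}$, $\indmap{(\ejr{\rx})}$, and the ordinary exponential law $\EXP_{\Lman,\Mman,\Jr{\rx}{\Mman}{\Nman}}$, from which $\EXP$ inherits its homeomorphism property. You are more careful than the paper in two respects---you explicitly justify why $\indmap{(\ejr{\rx})}$ is a topological embedding (the paper simply asserts that the horizontal arrows are embeddings ``by definition''), and you treat the case $\rx=\infty$ separately via suprema of topologies, which the paper's terse proof does not isolate.
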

\begin{proof}
Notice that we have the following commutative diagram:
\[
\xymatrix{
\ \Crm{0,\rx}{\Lman\times\Mman}{\Nman} \
	\ar@{^(->}[rr]^-{\jetM{\rx}{\Mman}}  \ar[d]_-{\EXP} &&
\ \Crm{0}{\Lman\times\Mman}{\Jr{\rx}{\Mman}{\Nman}} \
	\ar[d]^-{\EXP_{\Lman,\Mman,\Jr{\rx}{\Mman}{\Nman}}}_-{\cong} \\
%%%%%%
\ \Cont{\Lman}{\Crm{\rx}{\Mman}{\Nman}} \
     \ar@{^(->}[rr]^-{\indmap{(\ejr{\rx})}} &&
\ \Cont{\Lman}{\Cont{\Mman}{\Jr{\rx}{\Mman}{\Nman}}} \
}
\]
in which horizontal arrows are by definition topological embeddings, right arrow is a homeomorphism by Lemma~\ref{lm:exp_law}, and the left arrow (i.e.\! our map $\EXP$) is a bijection.
Hence it is a homeomorphism.
\end{proof}
\begin{corollary}\label{cor:exp_law_C0r_SLMN}
Let $\Lman$ be locally compact and $\Dman$ be a $T_3$ topological space.
Then we have the following homeomorphisms:
\begin{multline*}
\Cont{\Dman}{\Crm{0,\rx}{\Lman\times\Mman}{\Nman}}
\ \stackrel{\eqref{equ:exp_law_C0r}}{\cong} \
\Cont{\Dman}{\Cont{\Lman}{\Crm{\rx}{\Mman}{\Nman}}}
\ \stackrel{\eqref{equ:exp_law_general}}{\cong} \\ \cong
\Cont{\Dman\times\Lman}{\Crm{\rx}{\Mman}{\Nman}}
\ \stackrel{\eqref{equ:exp_law_C0r}}{\cong} \
\Crm{0,\rx}{(\Dman\times\Lman)\times\Mman}{\Nman}.
\qed
\end{multline*}
\end{corollary}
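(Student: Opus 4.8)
The plan is to exhibit the three displayed homeomorphisms as the three labelled exponential laws, applied to suitably chosen triples of spaces, and to check that under the hypotheses each is not merely a continuous bijection but a genuine homeomorphism. Reading the chain from left to right, the first $\cong$ is the map $\indmap{\EXP}$ obtained by post-composition with the homeomorphism $\EXP\colon\Crm{0,\rx}{\Lman\times\Mman}{\Nman}\cong\Cont{\Lman}{\Crm{\rx}{\Mman}{\Nman}}$ of Lemma~\ref{lm:exp_law_C0r}; the second is the plain exponential law~\eqref{equ:exp_law_general} of Lemma~\ref{lm:exp_law} for the triple $(\Dman,\Lman,\Crm{\rx}{\Mman}{\Nman})$; and the third is Lemma~\ref{lm:exp_law_C0r} once more, this time for the parameter space $\Dman\times\Lman$ in place of $\Lman$.

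First I would justify the first arrow. Since $\EXP$ and $\EXP^{-1}$ are both continuous by Lemma~\ref{lm:exp_law_C0r}, the functoriality and continuity of $\indmap{(-)}$ from Lemma~\ref{lm:cotop}\ref{enum:lm:cotop::composition} show that $\indmap{\EXP}$ and $\indmap{(\EXP^{-1})}$ are mutually inverse continuous maps, so $\indmap{\EXP}$ is a homeomorphism. For the second arrow I would record that in the triple $(\Dman,\Lman,\Crm{\rx}{\Mman}{\Nman})$ the factor over which we exponentiate, namely $\Lman$, is locally compact and the base $\Dman$ is $T_3$; these are exactly the hypotheses of Lemma~\ref{lm:exp_law}\ref{enum:exp_law:4}, so~\eqref{equ:exp_law_general} is a homeomorphism here. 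The third arrow is Lemma~\ref{lm:exp_law_C0r} applied with parameter space $\Dman\times\Lman$, which is $T_3$ as a product of two $T_3$ spaces. Finally, since all three maps are restrictions and corestrictions of one and the same set-theoretic exponential adjunction~\eqref{equ:exp_law_for_sets}, their composite is the expected map carrying $g\in\Cont{\Dman}{\Crm{0,\rx}{\Lman\times\Mman}{\Nman}}$ to $(d,\ptl,\ptm)\mapsto g(d)(\ptl,\ptm)$, under the canonical identification $(\Dman\times\Lman)\times\Mman\cong\Dman\times(\Lman\times\Mman)$; one checks that the $\Cr{0,\rx}$ condition along $\Mman$ is preserved by this reassociation, which is immediate from Definition~\ref{def:part_Cr} and Lemma~\ref{lm:exp_law_finite}.

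The main point that needs care---the ``hard part''---is the separation hypothesis underlying the two outer arrows. Lemma~\ref{lm:exp_law_C0r} requires the relevant parameter space ($\Lman$ in the first arrow, $\Dman\times\Lman$ in the third) to be $T_3$, whereas the statement explicitly lists only local compactness of $\Lman$. I would therefore make explicit at the start that $\Lman$ is a (smooth) manifold, hence simultaneously locally compact and $T_3$; then $\Dman\times\Lman$ is $T_3$ as well, and all the hypotheses invoked above are in force. Once this is noted, no further estimates are required: the statement is simply the concatenation of the three homeomorphisms.
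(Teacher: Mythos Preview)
Your proposal is correct and follows exactly the approach indicated by the paper, which offers no proof beyond the labels on the three $\cong$ signs pointing to \eqref{equ:exp_law_C0r} and \eqref{equ:exp_law_general}. Your careful tracking of the hypotheses needed for each arrow---in particular your observation that Lemma~\ref{lm:exp_law_C0r} requires the parameter space to be $T_3$, so that one must use that $\Lman$ is a manifold (hence $T_3$) rather than merely locally compact---is a point the paper leaves implicit, and it is worth making explicit as you do.
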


The following two lemmas are analogues of Lemma~\ref{lm:cotop}\ref{enum:lm:cotop::composition} for $\Cr{0,\rx}$ maps.
\begin{lemma}\label{lm:C0r_forward_composition}
\begin{enumerate}[leftmargin=*, label={\rm(\arabic*)}]
\item\label{enum:lm:C0r_forward_composition:a}
Let $\Func:\Lman \times \Mman \to \Nman$ be a $\Cr{0,\rx}$ map, $\Qman$ a smooth manifold, and $\phi:\Nman\to\Qman$ a $\Cr{\rx}$ map.
Then the composition $\phi\circ\Func:\Lman \times \Mman \to \Qman$ is $\Cr{0,\rx}$ as well.

\item\label{enum:lm:C0r_forward_composition:c}
Moreover, if $\Lman$ is a $T_3$ space, then the multiplication map
\[
\mult:
\Crm{0,\rx}{\Lman \times \Mman}{\Nman} \times \Crm{\rx}{\Nman}{\Qman}
\to
\Crm{0,\rx}{\Lman \times \Mman}{\Qman},
\qquad
\mult(\Func,\phi) = \phi\circ\Func,
\]
is continuous.
\end{enumerate}
\end{lemma}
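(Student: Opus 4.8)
The plan is to avoid any direct manipulation of partial derivatives or local presentations and instead reduce both assertions to the continuity of jet composition (Corollary~\ref{cor:Wr_cont_composition}) by passing to adjoint maps via the exponential laws of Lemmas~\ref{lm:exp_law_finite} and~\ref{lm:exp_law_C0r}. Once this reduction is set up, everything becomes a formal chain of continuous maps assembled from the elementary properties in Lemma~\ref{lm:cotop}.

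For statement~\ref{enum:lm:C0r_forward_composition:a}, I would first pass to adjoints. By Lemma~\ref{lm:exp_law_finite} (and Corollary~\ref{cor:exp_law_inf} when $\rx=\infty$), the map $\Func$ being $\Cr{0,\rx}$ is equivalent to its adjoint $\func=\EXP(\Func):\Lman\to\Crm{\rx}{\Mman}{\Nman}$ being continuous into $\Wr{\rx}$. A direct computation shows that the adjoint of $\phi\circ\Func$ is $\indmap{\phi}\circ\func$, where $\indmap{\phi}:\Crm{\rx}{\Mman}{\Nman}\to\Crm{\rx}{\Mman}{\Qman}$, $\indmap{\phi}(\gfunc)=\phi\circ\gfunc=\mult_{\rx}(\gfunc,\phi)$, is obtained from the continuous composition map $\mult_{\rx}$ of Corollary~\ref{cor:Wr_cont_composition} by fixing its second argument to $\phi$. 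Hence $\indmap{\phi}$ is continuous, so is $\indmap{\phi}\circ\func$, and therefore $\phi\circ\Func$ is $\Cr{0,\rx}$. I note that this part requires no separation hypothesis on $\Lman$.

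For statement~\ref{enum:lm:C0r_forward_composition:c}, I would use the $T_3$ hypothesis on $\Lman$ to invoke Lemma~\ref{lm:exp_law_C0r}, which identifies $\Crm{0,\rx}{\Lman\times\Mman}{\Nman}$ with $\Cont{\Lman}{\Crm{\rx}{\Mman}{\Nman}}$ and $\Crm{0,\rx}{\Lman\times\Mman}{\Qman}$ with $\Cont{\Lman}{\Crm{\rx}{\Mman}{\Qman}}$ homeomorphically. Under these identifications it suffices to prove continuity of $(\func,\phi)\mapsto\bigl(\ptl\mapsto\phi\circ\func(\ptl)\bigr)$, which I would realise as a composition of three continuous maps: first $(\func,\phi)\mapsto(\func,c_{\phi})$, where $c_{\phi}:\Lman\to\Crm{\rx}{\Nman}{\Qman}$ is the constant map, continuity of $\phi\mapsto c_{\phi}$ being Lemma~\ref{lm:cotop}\ref{enum:lm:cotop::evaluation}; next, since $\Crm{\rx}{\Mman}{\Nman}$ and $\Crm{\rx}{\Nman}{\Qman}$ are $T_3$, the identification $\Cont{\Lman}{\Crm{\rx}{\Mman}{\Nman}}\times\Cont{\Lman}{\Crm{\rx}{\Nman}{\Qman}}\cong\Cont{\Lman}{\Crm{\rx}{\Mman}{\Nman}\times\Crm{\rx}{\Nman}{\Qman}}$ of Lemma~\ref{lm:cotop}\ref{enum:lm:cotop:map_into_product}; and finally post-composition with the continuous jet composition map $\mult_{\rx}$, using Lemma~\ref{lm:cotop}\ref{enum:lm:cotop::composition}, which sends $\ptl\mapsto(\func(\ptl),\phi)$ to $\ptl\mapsto\mult_{\rx}(\func(\ptl),\phi)=\phi\circ\func(\ptl)$. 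For the case $\rx=\infty$ I would finish by the same reduction used in Corollary~\ref{cor:Wr_cont_composition}: since $\Wr{\infty}$ is generated by the topologies $\Wr{\sx}$ with $\sx<\infty$, a subbase for the compact-open topology on the target consists of sets $\WNBH{0}{\Kman}{\Vman}$ with $\Vman$ open in some $\Wr{\sx}$, so continuity into $\Wr{\infty}$ follows from continuity into each finite stage.

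Since the whole argument is formal, the only point demanding genuine care is the \emph{direction} in which Lemma~\ref{lm:cotop}\ref{enum:lm:cotop:map_into_product} is applied: I need the inverse of the natural map $\eta$ to be continuous, i.e. $\eta$ to be a homeomorphism, and this is exactly why the relevant hypothesis is the $T_3$-ness of the factor spaces $\Crm{\rx}{\Mman}{\Nman}$ and $\Crm{\rx}{\Nman}{\Qman}$ (which holds by the remarks following Definition~\ref{def:W0}) rather than local compactness of $\Lman$. The $T_3$ assumption imposed on $\Lman$ in the statement serves a different purpose, namely to make the exponential law of Lemma~\ref{lm:exp_law_C0r} a homeomorphism.
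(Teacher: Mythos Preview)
Your proof is correct. For part~\ref{enum:lm:C0r_forward_composition:c} your argument coincides essentially verbatim with the paper's: both pass to adjoints via Lemma~\ref{lm:exp_law_C0r}, use the constant map of Lemma~\ref{lm:cotop}\ref{enum:lm:cotop::evaluation}, the product identification of Lemma~\ref{lm:cotop}\ref{enum:lm:cotop:map_into_product} (invoking the $T_3$ property of the spaces of $\Cr{\rx}$ maps), and then push forward by $\mult_{\rx}$. Your closing remark on the case $\rx=\infty$ is harmless but unnecessary, since Corollary~\ref{cor:Wr_cont_composition} already covers $\rx=\infty$ and the rest of the chain is insensitive to the value of $\rx$.

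For part~\ref{enum:lm:C0r_forward_composition:a} you take a genuinely different route. The paper argues directly in local coordinates: it reduces to open subsets of quadrants with $\Qman=\bR$, writes out the chain rule~\eqref{equ:dphif_dxj} for the first partials of $\phi\circ\Func$ along $\Mman$, observes that each term is continuous in $(\ptl,\ptm)$, and for $\rx>1$ appeals to iterated differentiation. You instead invoke the characterization of Lemma~\ref{lm:exp_law_finite}/Corollary~\ref{cor:exp_law_inf} to replace the local computation by the single observation that the adjoint of $\phi\circ\Func$ is $\indmap{\phi}\circ\EXP(\Func)$, with $\indmap{\phi}$ continuous by Corollary~\ref{cor:Wr_cont_composition}. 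Your argument is shorter and coordinate-free, and since both lemmas you cite are already established at this point there is no circularity; the paper's hands-on computation, on the other hand, makes explicit at the level of derivatives \emph{why} the $\Cr{0,\rx}$ property survives post-composition, which may be instructive.
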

\begin{proof}
\ref{enum:lm:C0r_forward_composition:a}
Since the property $\Cr{0,\rx}$ is local and says about coordinate functions of $\Func$, it suffices to consider the case when  $\Mman\subset\cone{m}{\Lambda}$, and $\Nman\subset\cone{n}{\Lambda'}$ are open subsets, and $\Qman=\bR$.

Let $\ptl\in \Lman$, $\ptm=(\ptm_1,\ldots,\ptm_k) \in \Mman$, $\ptn=(\ptn_1,\ldots,\ptn_k) \in \Nman$, and 
\[ \Func = (\Func_1,\ldots,\Func_n):\Lman \times \Mman \to \Nman\subset\bR^n\]
be the coordinate functions of $\Func$.
Then $\phi\circ\Func|_{\ptl\times\Mman}:\ptl\times\Mman \to \bR$ is $\Cr{\rx}$ which proves condition~\ref{enum:ft:ft_Cr} of Definition~\ref{def:part_Cr}.

Also, $\phi\circ\Func$ is continuous, and this proves~\ref{enum:ft:df_Cs} for $r=0$.
Moreover, if $r\geq1$, then
\begin{align}\label{equ:dphif_dxj}
\frac{\partial(\phi\circ\Func)}{\partial\ptm_j}(\ptl,\ptm) &=
\sum_{i=1}^{n} \frac{\partial\phi}{\partial\ptn_i}
	\bigl( \Func(\ptl,\ptm)\bigr)
	\frac{\partial\Func_i}{\partial\ptm_j}(\ptl,\ptm), \quad j=1,\ldots,m,
\end{align}
is continuous in $(\ptl,\ptm)$ due to continuity in $(\ptl,\ptm)$ of $\frac{\partial\phi}{\partial\ptn_i}$, $\frac{\partial\Func_i}{\partial\ptm_j}$, and $\Func$.
This proves~\ref{enum:ft:df_Cs} for $\rx=1$.

If $\rx>1$, then differentiating further~\eqref{equ:dphif_dxj} in $\ptm$ we will see that all partial derivatives of $\phi\circ\Func$ up to order $\rx$ are continuous in $(\ptl,\ptm)$ as well.

\medskip

\ref{enum:lm:C0r_forward_composition:c}
Now let us prove continuity of the map $\mult$.
By Lemma~\ref{lm:cotop}\ref{enum:lm:cotop::evaluation}, we have a continuous map $c:\Crm{\rx}{\Nman}{\Qman} \to \Cont{\Lman}{\Crm{\rx}{\Nman}{\Qman}}$ defined so that $c(\GFunc)$ is the constant map $\Lman \to \{\GFunc\}$ to the point $\GFunc\in\Crm{\rx}{\Nman}{\Qman}$.
Then $\mult$ coincides with the following composition of continuous maps:
\begin{multline*}
\Crm{0,\rx}{\Lman \times \Mman}{\Nman} \times \Crm{\rx}{\Nman}{\Qman}
	\ \xrightarrow[~\eqref{equ:exp_law_C0r}~]{\EXP_{\Lman,\Mman,\Nman} \ \times \ c} \\
%%%%%%%%%%%%%%%
\to \ \Cont{\Lman}{\Crm{\rx}{\Mman}{\Nman}} \times \Cont{\Lman}{\Crm{\rx}{\Nman}{\Qman}}
\ \xrightarrow[\text{Lemma~\ref{lm:cotop}\ref{enum:lm:cotop:map_into_product}}]{\cong} \\
%%%%%%%%%%%%%%%
\to \ \Cont{\Lman}{\Crm{\rx}{\Mman}{\Nman} \times \Crm{\rx}{\Nman}{\Qman}}
\ \xrightarrow{\text{Corollary~\ref{cor:Wr_cont_composition}}} \\
\to \ \Cont{\Lman}{\Crm{\rx}{\Mman}{\Qman}} \cong \Crm{0,\rx}{\Lman \times \Mman}{\Qman}.
\end{multline*}
Notice that $T_3$ property of $\Lman$ is used for continuity of the first arrow.
The second arrow is continuous due to Lemma~\ref{lm:cotop}\ref{enum:lm:cotop:map_into_product} and $T_3$ property of spaces of $\Cr{\rx}$ maps.
\end{proof}

\subsection*{$\Cr{0,\rx}$ morphisms of trivial fibrations}
Let 
\begin{align*}
&\pr{\Aman}:\Aman\times\Bman \to \Aman, &
&\pr{\Lman}:\Lman\times\Mman\to\Lman
\end{align*}
be trivial fibrations, i.e. projections to the corresponding first coordinates.
Then a \myemph{morphism} between these fibrations is a pair of continuous maps $\LMiso:\Aman\times\Bman \to \Lman\times\Mman$ and $\Hl:\Aman\to\Lman$ making commutative the following diagram:
\begin{equation}\label{equ:C0r_square_diag}
\xymatrix{
\Aman\times\Bman \ar[rr]^-{\LMiso} \ar[d]_-{\pr{\Aman}} && \Lman\times\Mman \ar[d]^-{\pr{\Lman}} \\
\Aman \ar[rr]^-{\Hl} && \Lman
}
\end{equation}
which just means that $\LMiso(a,b) = (\Hl(a), \Hm(a,b))$, where $\Hm:\Aman\times\Bman \to \Mman$ is a continuous map.
In this case we will write $\LMiso=(\Hl,\Hm)$.

If in addition $\Bman,\Mman$ are smooth manifolds and $\Hm\in\Crm{0,\rx}{\Aman\times\Bman}{\Mman}$ i.e. it is $\Cr{0,\rx}$ along $\Bman$, then $\LMiso$ will be called a \myemph{$\Cr{0,\rx}$ morphism}.

Thus the following \myemph{topological product}
\begin{align*}
    \Mor{0,\rx}{\Aman\times\Bman}{\Lman\times\Mman}&:=
    \Cont{\Aman}{\Lman} \times \Crm{0,\rx}{\Aman\times\Bman}{\Mman}
\end{align*}
can be regarded as the space of \myemph{$\Cr{0,\rx}$ morphisms} between trivial fibrations $\pr{\Aman}$ and $\pr{\Lman}$.

\begin{lemma}\label{lm:C0r_backward_composition}
Let $\Aman, \Lman$ be topological spaces and $\Bman,\Mman,\Nman$ be smooth manifolds.
\begin{enumerate}[leftmargin=*, label={\rm(\arabic*)}]
\item\label{enum:lm:C0r_backward_composition:a}
Let $\LMiso = (\Hl, \Hm) \in \Mor{0,\rx}{\Aman\times\Bman}{\Lman\times\Mman}$.
Then for any $\Cr{0,\rx}$ along $\Mman$ map $\Func:\Lman\times\Mman\to\Nman$ the composition $\Func\circ\LMiso:\Aman\times\Bman\to\Nman$ is $\Cr{0,\rx}$ along $\Bman$ as well.

\item\label{enum:lm:C0r_backward_composition:c}
If $\Aman$ and $\Lman$ are $T_3$ spaces, then the following composition map
\begin{gather*}
\mult: \ \Mor{0,\rx}{\Aman\times\Bman}{\Mman\times\Lman} \ \times \ \Crm{0,\rx}{\Lman\times\Mman}{\Nman} \ \to \
\Crm{0,\rx}{\Aman\times\Bman}{\Nman}, \\
\mult(\Hl, \Hm, \Func)(a,b) = \Func(\Hl(a), \Hm(a,b))
\end{gather*}
is continuous.
\end{enumerate}
\end{lemma}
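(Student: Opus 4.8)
The plan is to reduce both assertions to statements about the \emph{adjoint} maps, using the exponential laws of Lemma~\ref{lm:exp_law_finite} and Lemma~\ref{lm:exp_law_C0r}, and then to express the resulting maps as composites of continuous maps built from the composition of $\Cr{\rx}$ maps (Corollary~\ref{cor:Wr_cont_composition}) and the elementary properties of compact open topologies (Lemma~\ref{lm:cotop}). The key preliminary computation is the adjoint of $\Func\circ\LMiso$. Writing $\func=\EXP(\Func):\Lman\to\Crm{\rx}{\Mman}{\Nman}$ and $\gfunc=\EXP(\Hm):\Aman\to\Crm{\rx}{\Bman}{\Mman}$ for the adjoints of $\Func$ and of $\Hm$, one has for every $a\in\Aman$ the identity of maps $\Bman\to\Nman$
\[
(\Func\circ\LMiso)|_{a\times\Bman}
\;=\;
\func(\Hl(a))\circ\gfunc(a),
\]
since $b\mapsto\Func(\Hl(a),\Hm(a,b))=\func(\Hl(a))\bigl(\gfunc(a)(b)\bigr)$. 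Thus the $\Bman$-adjoint of $\Func\circ\LMiso$ is the map $a\mapsto\func(\Hl(a))\circ\gfunc(a)$, and everything reduces to analysing this formula.

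For part~\ref{enum:lm:C0r_backward_composition:a}, assume first $\rx<\infty$. By Lemma~\ref{lm:exp_law_finite} the maps $\func$ and $\gfunc$ are continuous, and $\Hl$ is continuous by hypothesis, so $a\mapsto\func(\Hl(a))=\func\circ\Hl$ is continuous as a composite of \emph{fixed} continuous maps, and $a\mapsto\gfunc(a)$ is continuous. Hence $a\mapsto\bigl(\gfunc(a),\func(\Hl(a))\bigr)$ is continuous into $\Crm{\rx}{\Bman}{\Mman}\times\Crm{\rx}{\Mman}{\Nman}$, and post-composing with the composition map $\mult_{\rx}$ of Corollary~\ref{cor:Wr_cont_composition} shows that the $\Bman$-adjoint $a\mapsto\func(\Hl(a))\circ\gfunc(a)$ is continuous. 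By the equivalence in Lemma~\ref{lm:exp_law_finite} this means precisely that $\Func\circ\LMiso$ is $\Cr{0,\rx}$ along $\Bman$. The case $\rx=\infty$ follows at once, since $\Cr{0,\infty}$ means $\Cr{0,\rx}$ for every finite $\rx$ (Corollary~\ref{cor:exp_law_inf}).

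For part~\ref{enum:lm:C0r_backward_composition:c}, I would use the homeomorphisms of Lemma~\ref{lm:exp_law_C0r} (valid because $\Aman$ and $\Lman$ are $T_3$) to identify the source $\Mor{0,\rx}{\Aman\times\Bman}{\Lman\times\Mman}\times\Crm{0,\rx}{\Lman\times\Mman}{\Nman}$ and the target $\Crm{0,\rx}{\Aman\times\Bman}{\Nman}$ with iterated compact open mapping spaces, rewriting $\mult$ as the map
\[
\Cont{\Aman}{\Lman}\times\Cont{\Aman}{\Crm{\rx}{\Bman}{\Mman}}\times\Cont{\Lman}{\Crm{\rx}{\Mman}{\Nman}}
\;\longrightarrow\;
\Cont{\Aman}{\Crm{\rx}{\Bman}{\Nman}},
\qquad
(\Hl,\gfunc,\func)\longmapsto\bigl[\,a\mapsto\func(\Hl(a))\circ\gfunc(a)\,\bigr].
\]
In the spirit of the proof of Lemma~\ref{lm:C0r_forward_composition}\ref{enum:lm:C0r_forward_composition:c}, I would assemble this as a composite of continuous maps: first send $(\Hl,\func)\mapsto\func\circ\Hl\in\Cont{\Aman}{\Crm{\rx}{\Mman}{\Nman}}$ by the joint composition map; then pair the result with $\gfunc$ and apply Lemma~\ref{lm:cotop}\ref{enum:lm:cotop:map_into_product} to land in $\Cont{\Aman}{\Crm{\rx}{\Bman}{\Mman}\times\Crm{\rx}{\Mman}{\Nman}}$ (using that $\Aman$ is $T_3$ and that the $\Cr{\rx}$-mapping spaces are $T_3$); and finally post-compose with $\indmap{(\mult_{\rx})}$, the map induced by Corollary~\ref{cor:Wr_cont_composition}. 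Composing back with the exponential-law homeomorphism $\Cont{\Aman}{\Crm{\rx}{\Bman}{\Nman}}\cong\Crm{0,\rx}{\Aman\times\Bman}{\Nman}$ yields $\mult$, and the case $\rx=\infty$ is obtained from the finite cases as above.

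I expect the main obstacle to be the very first arrow of this assembly, namely the \emph{joint} continuity of
\[
\Cont{\Aman}{\Lman}\times\Cont{\Lman}{\Crm{\rx}{\Mman}{\Nman}}\;\longrightarrow\;\Cont{\Aman}{\Crm{\rx}{\Mman}{\Nman}},
\qquad
(\Hl,\func)\longmapsto\func\circ\Hl.
\]
This is exactly the composition map whose middle space is $\Lman$, and by Lemma~\ref{lm:cotop}\ref{enum:lm:cotop::composition} its joint continuity is guaranteed once $\Lman$ is \emph{locally compact}. The remaining factors are harmless (taking $\Bman=\Mman$ and $\gfunc\equiv\id_{\Mman}$ recovers precisely this map, so no reorganisation can avoid it), so the delicate point is that local compactness of $\Lman$, rather than merely its $T_3$ property, is what is genuinely used here; this holds automatically in every intended application, where $\Lman$ is a finite-dimensional manifold.
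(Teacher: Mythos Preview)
Your argument is correct, and the overall architecture of part~\ref{enum:lm:C0r_backward_composition:c}---rewrite via $\EXP$, pair into $\Cont{\Aman}{\Crm{\rx}{\Bman}{\Mman}\times\Crm{\rx}{\Mman}{\Nman}}$ using Lemma~\ref{lm:cotop}\ref{enum:lm:cotop:map_into_product}, then apply $\indmap{(\mult_{\rx})}$---is exactly the paper's. The one genuine difference is in part~\ref{enum:lm:C0r_backward_composition:a}: the paper argues directly in local coordinates, observing that the $\rx$-th partial derivatives of $(\Func\circ\LMiso)|_{a\times\Bman}$ along $\Bman$ are polynomials in the partial derivatives of $\Func$ along $\Mman$ and of $\Hm$ along $\Bman$ (a chain-rule computation), whereas you bypass this entirely by passing to adjoints and invoking the already-established continuity of $\mult_{\rx}$. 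Your route is cleaner and avoids repeating the local analysis already encapsulated in Corollary~\ref{cor:Wr_cont_composition}.

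You are also right to flag the joint continuity of $(\Hl,\func)\mapsto\func\circ\Hl$ on $\Cont{\Aman}{\Lman}\times\Cont{\Lman}{\Crm{\rx}{\Mman}{\Nman}}$ as the delicate step. The paper handles the corresponding arrow by citing the exponential law~\eqref{equ:exp_law_general}, but that reference does not by itself deliver joint continuity of composition through $\Lman$; what is really being used is Lemma~\ref{lm:cotop}\ref{enum:lm:cotop::composition}, which requires $\Lman$ locally compact. So your diagnosis is accurate: the hypothesis ``$\Lman$ is $T_{3}$'' is not quite what is consumed at this point, and local compactness of $\Lman$ (automatic in every use of the lemma in the paper, where $\Lman$ is a finite-dimensional manifold) is the honest assumption.
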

\begin{proof}
\ref{enum:lm:C0r_backward_composition:a}
The proof is similar to Lemma~\ref{lm:C0r_forward_composition}.
Notice that $\Func\circ\LMiso$ is continuous and we get from~\eqref{equ:C0r_square_diag} that $\LMiso(a\times\Bman) \subset \Hl(a) \times \Mman$ for each $a\in\Aman$.
Hence the map
\[
    (\Func\circ\LMiso)_{a}:=\Func\circ\LMiso|_{a\times\Bman} = \bigl( \Func|_{\Hl(a)\times\Mman}\bigr) \circ \bigl( \LMiso|_{a\times\Bman} \bigr)
\]
is $\Cr{\rx}$.
Moreover, it is evident that the partial derivatives of order $\rx$ of $(\Func\circ\LMiso)_{a}$ along $\Bman$ in local coordinates are polynomials of partial derivatives of $\Func$ along $\Mman$ and $\LMiso$ along $\Bman$ of orders $\leq\rx$.
But the assumption that $\Func$ and $\LMiso$ are $\Cr{0,\rx}$ means that the latter derivatives are continuous on the corresponding products $\Aman\times\Bman$ and $\Lman\times\Mman$.
This implies continuity of the $\rx$-jet of $\Func\circ\LMiso$ along $\Bman$.
We leave the details for the reader.

\newcommand\arrsp{\qquad\qquad\qquad\qquad\qquad\qquad\qquad}
\newcommand\brrsp{\qquad}
\ref{enum:lm:C0r_backward_composition:c}
Notice that $\mult$ is the composition of the following continuous maps
\begin{align*}
   &\mult: \Mor{0,\rx}{\Aman\times\Bman}{\Mman\times\Lman} \ \times \ \Crm{0,\rx}{\Lman\times\Mman}{\Nman}  \equiv \\
%%%%%%%%%%%%
& \brrsp \equiv: \Cont{\Aman}{\Lman} \times \Crm{0,\rx}{\Aman\times\Bman}{\Mman} \times \Crm{0,\rx}{\Lman\times\Mman}{\Nman} \\
&\arrsp \xrightarrow[\eqref{equ:exp_law_C0r}]{(\phi, \ \eta, \ \Func) \ \mapsto \ (\,a\,\mapsto\,\eta_a, \ \phi, \ \ptl\,\mapsto\,\Func_{\ptl}\,)} \\
%%%%%%%%%%%%
& \brrsp 
\to \Cont{\Aman}{\Crm{\rx}{\Bman}{\Mman}} \times \Cont{\Aman}{\Lman} \times \Cont{\Lman}{\Crm{\rx}{\Mman}{\Nman}} \\
&\arrsp \xrightarrow[\eqref{equ:exp_law_general}]{(\,a\,\mapsto\,\eta_a, \ \phi, \ \ptl\,\mapsto\,\Func_{\ptl}\,) \ \mapsto \ (\,a\,\mapsto\,\eta_a, \  a\,\mapsto\,\Func_{\phi(a)}\,)} \\
% %%%%%%%%%%%%
& \brrsp
\to \Cont{\Aman}{\Crm{\rx}{\Bman}{\Mman}} \times \Cont{\Aman}{\Crm{\rx}{\Mman}{\Nman}} \\
&\arrsp 
\xrightarrow[\text{Lemma~\ref{lm:cotop}\ref{enum:lm:cotop:map_into_product}}]{(\,a\,\mapsto\,\eta_a, \  a\,\mapsto\,\Func_{\phi(a)}\,) \ \mapsto \ (\,a\,\mapsto\, (\eta_a, \Func_{\phi(a)})\,)} \\
% %%%%%%%%%%%%
& \brrsp 
\to \Cont{\Aman}{\Crm{\rx}{\Bman}{\Mman} \times \Crm{\rx}{\Mman}{\Nman}} \\
&\arrsp 
\xrightarrow[\text{Corollary~\ref{cor:Wr_cont_composition}}]{(\,a\,\mapsto\, (\eta_a, \Func_{\phi(a)})\,) \ \mapsto \ (a\,\mapsto\,\Func_{\phi(a)}\circ\eta)} \\
%%%%%%%%%%%%%%
& \brrsp
\to \Cont{\Aman}{\Crm{\rx}{\Bman}{\Nman}} \cong
\Crm{0,r}{\Aman\times\Bman}{\Nman}.
\qedhere
\end{align*}
\end{proof}

\subsection*{Description of the topology $\Wr{0,\rx}$ in terms of norms}
There is a well known description of Whitney topologies $\Wr{\rx}$ in terms of norms $\cnorm{\cdot}{\rx}{\Kman}$, see~\cite[\S2.2]{Hirsch:DiffTop}.
We will need a similar description of the topology $\Wr{0,\rx}$.

Let $\Lman$ be a topological space, $\Yman \subset \Lman \times \bR^m_{+}$ an open subset, $\Kman\subset\Yman$ a compact set, $r<\infty$, and $\Func=(\Func_1,\ldots,\Func_n):\Yman\to\cone{n}{\Lambda'}$ a map being $\Cr{0,\rx}$ along $\cone{m}{\Lambda}$.
Define the \myemph{$\Cr{0,\rx}$-norm of $\Func$ on $\Kman$} by the following formula:
\[
\cnorm{\Func}{0,\rx}{\Kman} :=
\sup_{
	\substack{ (\ptl,\ptm)\in\Kman, \ \ j=1,\ldots, n, \\[1mm]
	           i=0,\ldots,\rx, \ \  a_1,\ldots,a_i=0,\ldots,m}
}
\left| \frac{\partial^i\Func_j}{\partial\ptm_{a_1}\cdots\partial\ptm_{a_i}}(\ptl,\ptm) \right|,
\]
where $x=(\ptm_1,\ldots,\ptm_m)$.
It is an analogue of the usual $\Cr{\rx}$-norm $\cnorm{\Func}{\rx}{\Kman}$ but only in coordinates in $\bR^m$, see~\cite[\S2.2]{Hirsch:DiffTop}.

Let $\Uman\subset\Lman$ be an open subset, $\phi:\Vman\to\cone{m}{\Lambda}$ and $\psi:\Wman\to\cone{n}{\Lambda'}$ two charts from the atlases of $\Mman$ and $\Nman$ respectively, and $\Func:\Lman\times\Mman\to\Nman$ a $\Cr{0,\rx}$ along $\Mman$ map such that $\Func(\Uman\times\Vman) \subset \Wman$.
Then a \myemph{local presentation} of $\Func$ in these charts is the following map
\begin{gather*} %\label{equ:local_presentation_C0r}
    \Lman\times\cone{m}{\Lambda}
		\supset
	\Uman\times\phi(\Vman)
		\ \xrightarrow{~\hat{\Func}~} \
	\psi(\Wman) \subset  \cone{n}{\Lambda'},
	\\
	\hat{\Func}(\ptl,\ptm) = \psi \circ \Func(t, \phi^{-1}(x)).
\end{gather*}
Therefore if $\Kman\subset\Uman\times\Vman$ is a compact subset, then one can define the \myemph{$\Cr{0,\rx}$-norm $\relcnorm{\Func}{0,\rx}{\Kman}{\phi,\psi}$ of $\Func$ on $\Kman$ with respect to charts $(\phi,\psi)$} by
\[
	\relcnorm{\Func}{0,\rx}{\Kman}{\phi,\psi}:= \cnorm{\hat{\Func}}{0,\rx}{\Kman}.
\]
Given $\Func\in\Crm{0,\rx}{\Lman\times\Mman}{\Nman}$ such that $\Func(\Kman) \subset \Wman$ and $\eps>0$ denote
\[
 \NN(\Func, \Kman, \Wman, \phi,\psi, \eps) \!=\! \{
	G\in\Crm{0,\rx}{\Lman\times\Mman}{\Nman} \mid 
	G(\Kman) \subset \Wman,  \relcnorm{\Func- G}{0,\rx}{\Kman}{\phi,\psi} \!<\!\eps
 \}.
\]

The following statement is an analogue of definition of the topology $\Wr{\rx}$ from~\cite[\S2.1, Eq.(1)]{Hirsch:DiffTop} or~\cite[Prop.~8.2.8]{Mukherjee:DT:2015}.
The proof is almost literally the same as in~\cite[Prop.~8.2.8]{Mukherjee:DT:2015} and we leave it for the reader.
\begin{lemma}\label{lm:base_W0r}
The sets $\NN(\Func, \Kman, \Wman, \phi,\psi, \eps)$ constitute a subbase of the topology $\Wr{0,\rx}$ on the space $\Crm{0,\rx}{\Lman\times\Mman}{\Nman}$.
\qed
\end{lemma}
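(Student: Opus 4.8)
The plan is to reduce everything to the defining construction of $\Wr{0,\rx}$. By definition this topology is the initial topology induced by the injection $\jetM{\rx}{\Mman}$ into $\Crm{0}{\Lman\times\Mman}{\Jr{\rx}{\Mman}{\Nman}}$ equipped with the topology $\Wr{0}$, which by Definition~\ref{def:W0} is the compact open topology. Hence a subbase of $\Wr{0,\rx}$ is given by the preimages $(\jetM{\rx}{\Mman})^{-1}\bigl(\WNBH{0}{\Kman}{\Vman}\bigr)$, where $\Kman\subset\Lman\times\Mman$ is compact and $\Vman\subset\Jr{\rx}{\Mman}{\Nman}$ is open, and the whole statement reduces to showing that this family and the family of sets $\NN(\Func,\Kman,\Wman,\phi,\psi,\eps)$ are mutually subordinate. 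The one computation driving everything is the observation that, since both jets $\jrm{\rx}{\Mman}{\Func}(\ptl,\ptm)$ and $\jrm{\rx}{\Mman}{G}(\ptl,\ptm)$ share the same source $\ptm$ and differ only in their destination-and-higher-derivative (``fibre'') coordinates, the norm $\relcnorm{\Func-G}{0,\rx}{\Kman}{\phi,\psi}$ is exactly the supremum over $\Kman$ of the fibre-coordinate distance between $\jrm{\rx}{\Mman}{\Func}$ and $\jrm{\rx}{\Mman}{G}$ read in the charts $(\phi,\psi)$.

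Granting this, I would first show that each $\NN(\Func,\Kman,\Wman,\phi,\psi,\eps)$ is $\Wr{0,\rx}$-open. Fix $G$ in it. By the observation above, the defining condition says that $\jrm{\rx}{\Mman}{G}$ maps $\Kman$ into the open ``tube'' of fibre-radius $\eps$ about $\jrm{\rx}{\Mman}{\Func}$ (intersected with the jets whose destination lies in $\Wman$). The subtlety is that the centre of this tube varies with the base point, so it is not a single compact open subbase set. I would therefore use uniform continuity of $\jrm{\rx}{\Mman}{\Func}$ on the compact $\Kman$ together with Lemma~\ref{lm:loc_comp_cover} to cover $\Kman$ by finitely many compact pieces $\Kman_1,\ldots,\Kman_p$, each so small that $\jrm{\rx}{\Mman}{\Func}$ is nearly constant on it; over each piece a fixed fibre-ball of radius slightly smaller than $\eps$ is an open $\Vman_a\subset\Jr{\rx}{\Mman}{\Nman}$, and the finite intersection $\bigcap_a (\jetM{\rx}{\Mman})^{-1}\bigl(\WNBH{0}{\Kman_a}{\Vman_a}\bigr)$ is a $\Wr{0,\rx}$-neighbourhood of $G$ contained in $\NN(\Func,\Kman,\Wman,\phi,\psi,\eps)$.

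Conversely, I would show that every subbase set $(\jetM{\rx}{\Mman})^{-1}\bigl(\WNBH{0}{\Kman}{\Vman}\bigr)$ is a union of finite intersections of $\NN$-sets. Given $G$ in it, so $\jrm{\rx}{\Mman}{G}(\Kman)\subset\Vman$, I would cover the compact set $\jrm{\rx}{\Mman}{G}(\Kman)$ by finitely many chart balls contained in $\Vman$, pull these back (again via Lemma~\ref{lm:loc_comp_cover}) to compact pieces $\Kman_a\subset\Kman$ on which $G$ has values in a single chart, and over each $\Kman_a$ manufacture a set $\NN(G,\Kman_a,\Wman_a,\phi_a,\psi_a,\eps_a)$ containing $G$ and carried by $\jetM{\rx}{\Mman}$ into $\Vman$; their finite intersection then lies in $(\jetM{\rx}{\Mman})^{-1}\bigl(\WNBH{0}{\Kman}{\Vman}\bigr)$. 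The main obstacle throughout is precisely this mismatch: a single $\NN$-set is a ``moving-centre tube'', whereas a compact open subbase set fixes one open target $\Vman$; reconciling them forces the covering-by-small-compact-pieces argument and careful use of the uniform continuity of the jet prolongation on compacta and of local compactness of $\Lman\times\Mman$. Once the two families are shown to be mutually subordinate, they generate the same topology, which is the claim; this is the elementary but bookkeeping-heavy argument of~\cite[Prop.~8.2.8]{Mukherjee:DT:2015}, now carried out with $\Mman$-derivatives only.
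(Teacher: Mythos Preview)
Your proposal is correct and follows precisely the route the paper indicates: the paper gives no detailed proof but simply states that the argument is almost literally that of~\cite[Prop.~8.2.8]{Mukherjee:DT:2015} adapted to $\Mman$-jets, which is exactly the reduction-to-compact-open-subbase and mutual-subordination argument you have outlined (and you even cite the same source).
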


\begin{lemma}\label{lm:approx_by_convol}
{\rm(cf.\cite[Chapter~2, Theorem 2.3(c)]{Hirsch:DiffTop} and \cite[Lemma~10.1.13]{MargalefOuterelo:DT:1992} for manifolds with corners)}
Let $\Func: \cone{l}{\Lambda'} \times \cone{m}{\Lambda} \supset \Uman \to \cone{n}{\Lambda''}$ be a $\Cr{0,\rx}$ along $\cone{m}{\Lambda}$ map, $r<\infty$, and $\Kman\subset\Uman$ be a compact subset.
Then for each $\eps>0$ there exists a $\Cr{\infty}$ map $\GFunc: \Uman \to \cone{n}{\Lambda''}$ such that 
\[
\cnorm{\GFunc - \Func}{0,\rx}{\Kman} < \eps.
\]
Moreover, if $\Func(\partial\Uman) \subset \partial\cone{n}{\Lambda''}$, then one can also assume that $\GFunc(\partial\Uman) \subset \partial\cone{n}{\Lambda''}$ as well.
\end{lemma}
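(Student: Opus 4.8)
The plan is to prove this by \emph{mollification} (convolution with a smooth bump), the two nonstandard features being the mixed $\Cr{0,\rx}$-regularity of $\Func$ (continuous in $\ptl$, $\Cr{\rx}$ in $\ptm$) and the corners of the quadrants; the $\Cinfty$ approximant will be genuinely smooth in \emph{all} variables, while the distance we must control involves only $\ptm$-derivatives.

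First I would make room for the convolution. Since $\Kman$ is compact and contained in the open set $\Uman$ of the product of quadrants, I would invoke the Whitney extension theorem (as the paper already does for manifolds with corners) to extend $\Func$ across the coordinate faces of $\cone{l}{\Lambda'}\times\cone{m}{\Lambda}$ to a map $\tilde\Func\colon\tilde\Uman\to\bR^n$ defined on an \emph{open} neighbourhood $\tilde\Uman\subset\bR^{l+m}$ of $\Kman$ and still of class $\Cr{0,\rx}$ along $\bR^m$. The crucial point is that the extension be \emph{parametric}: for each fixed $\ptl$ the slice $\tilde\Func(\ptl,\cdot)$ is a $\Cr{\rx}$ extension of $\Func(\ptl,\cdot)$, and the extension operator is chosen (Seeley-type, linear and continuous) so that joint continuity in $(\ptl,\ptm)$ of all $\ptm$-derivatives up to order $\rx$ is retained. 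Fix a compact neighbourhood $\Kman'\subset\tilde\Uman$ of $\Kman$. Now fix $\rho\in\Cinfty(\bR^{l+m})$ with $\rho\geq0$, $\int\rho=1$, supported in the unit ball, put $\rho_\delta(w)=\delta^{-(l+m)}\rho(w/\delta)$, and for $\delta$ small enough that every $\delta$-ball about a point of $\Kman$ lies in $\Kman'$ define $\GFunc_\delta=\tilde\Func*\rho_\delta$ near $\Kman$. Differentiating under the integral sign and throwing all derivatives onto the smooth kernel shows that $\GFunc_\delta$ is $\Cinfty$ in every variable $(\ptl,\ptm)$; note that no differentiability of $\Func$ in $\ptl$ is ever needed, only the joint continuity guaranteed by the $\Cr{0,\rx}$ hypothesis.

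For the estimate, fix a multi-index $\alpha$ in the $\ptm$-variables with $|\alpha|\leq\rx$. Integrating by parts in the $\ptm$-variable of the convolution (legitimate since $\tilde\Func$ is $\Cr{\rx}$ in $\ptm$) gives $\partial_{\ptm}^{\alpha}\GFunc_\delta=(\partial_{\ptm}^{\alpha}\tilde\Func)*\rho_\delta$. As $\partial_{\ptm}^{\alpha}\tilde\Func$ is continuous, hence uniformly continuous on $\Kman'$, the standard mollification estimate yields $\partial_{\ptm}^{\alpha}\GFunc_\delta\to\partial_{\ptm}^{\alpha}\tilde\Func=\partial_{\ptm}^{\alpha}\Func$ uniformly on $\Kman$ as $\delta\to0$. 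Taking the supremum over $|\alpha|\leq\rx$ and over the coordinate functions gives $\cnorm{\GFunc_\delta-\Func}{0,\rx}{\Kman}\to0$; choosing $\delta$ with this quantity $<\eps$ and restricting to $\Uman$ produces the desired $\GFunc$. Here it is decisive that the norm $\cnorm{\cdot}{0,\rx}{\Kman}$ only measures $\ptm$-derivatives, so our total lack of control on the $\ptl$-derivatives of $\GFunc_\delta-\Func$ is harmless.

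The main obstacle is the final boundary-preserving clause. The smoothing must be organized so that every face of the source on which a given target coordinate is forced to vanish stays respected. The natural device is a reflection trick: across a coordinate hyperplane $\{\ptl_a=0\}$ or $\{\ptm_b=0\}$ on which some coordinate function must vanish, extend that coordinate by \emph{odd} reflection and take $\rho=\prod_i\rho_i$ a product of one-variable bumps each \emph{even} in its variable; then (odd)$*$(even) is again odd, so $\GFunc_\delta$ vanishes on that hyperplane and $\GFunc(\partial\Uman)\subset\partial\cone{n}{\Lambda''}$ is preserved. For the parameter faces $\{\ptl_a=0\}$ this is clean, since in the $\ptl$-directions we need only continuity, which odd reflection preserves. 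The genuine difficulty is at the $\ptm$-faces, where odd reflection of a function vanishing merely to zeroth order is only $\Cr{1}$ (its even-order $\ptm$-derivatives jump), so it destroys the $\Cr{\rx}$-regularity needed for the estimate above when $\rx\geq2$. Reconciling the vanishing constraint with $\Cr{\rx}$-smoothness therefore requires a Hadamard-type factorization of the constrained coordinate near each face together with a face-by-face extension compatible with the corner (product) structure; this is precisely the bookkeeping carried out in \cite[Lemma~10.1.13]{MargalefOuterelo:DT:1992}, whose scheme I would follow for this part.
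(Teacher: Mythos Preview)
Your proposal is correct and follows essentially the same route as the paper: the paper's proof is itself a brief sketch that says to take $\GFunc=\theta*\Func$ for a smooth bump $\theta$ with small support and $\int\theta=1$, invoke the fact (Hirsch, Theorem~2.3(a),(b)) that convolution approximates each partial derivative uniformly to get $\cnorm{\GFunc-\Func}{0,\rx}{\Kman}<\eps$, and for the boundary-preserving clause take $\theta$ as a product of one-variable bumps, deferring to \cite[Chapter~2, Lemma~3.1]{Hirsch:DiffTop} and \cite[Lemma~10.1.13]{MargalefOuterelo:DT:1992} for the details. Your write-up simply fills in steps the paper leaves implicit (the parametric Seeley extension across the corner faces, and the diagnosis of why naive odd reflection fails at the $\ptm$-faces for $\rx\geq2$), and your deferral to \cite{MargalefOuterelo:DT:1992} for that last piece matches the paper exactly.
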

\begin{proof}
Again the proof almost literally repeats the proof of~\cite[Chapter~2, Theorem 2.3(c)]{Hirsch:DiffTop} for arbitrary maps, and~\cite[Chapter~2, Lemma~3.1]{Hirsch:DiffTop} (resp. \cite[Lemma~10.1.13]{MargalefOuterelo:DT:1992}) for maps between manifolds with boundary (resp. with corners) sending boundary to boundary.
The map $\GFunc$ should be taken in the form of convolution $\theta*\Func$ with some $\Cinfty$ function $\theta:\bR^l\times\bR^m\to[0,\infty)$ having a sufficiently small support and satisfying $\int_{\bR^l\times\bR^m} \theta = 1$.
Due to \cite[Chapter~2, Theorem 2.3(a),(b)]{Hirsch:DiffTop} the convolution uniformly approximates not only the initial map but each of partial derivatives of each of its coordinate functions, and this is enough for achieving the inequality $\cnorm{\GFunc - \Func}{0,\rx}{\Kman} < \eps$.
Also in the cases of sending boundary to boundary, one should choose $\theta$ in a special form of a product of functions depending on separate coordinates, \cite[Chapter~2, Lemma~3.1]{Hirsch:DiffTop} \cite[Lemma~10.1.13]{MargalefOuterelo:DT:1992}.
We leave the details for the reader.
\end{proof}

\begin{remark}\label{rem:Cerf_and_others}\rm
In the literature there are different approaches to the definition of the  ``right'' category for the maps belonging to the image of the embedding 
\[ \EXP_{\Lman,\Mman,\Nman}: \Crm{\rx}{\Lman\times\Mman}{\Nman} \subset \Cont{\Lman}{\Crm{\rx}{\Mman}{\Nman}} \]

For instance, in the paper~\cite[Definition~4.1.1]{Cerf:BSMF:1961} by J.~P.~Cerf the adjoint map $\EXP_{\Lman,\Mman,\Nman}(\Func)$ for $\Func\in \Crm{\rx}{\Lman\times\Mman}{\Nman}$ is called \myemph{$\rx$-admissible}%
\footnote{\,\myemph{$\rx$-permise} in French}.
Moreover, map $\phi:\mathcal{A}_1 \to \mathcal{A}_2$ between subsets $\mathcal{A}_i\subset \Crm{\rx}{\Mman_i}{\Nman_i}$ of spaces of $\Cr{\rx}$ maps between smooth manifolds is called \myemph{$\rx$-differentiable} whenever it induces a map $\Crm{\rx}{\Lman\times\Mman_1}{\Nman_1} \to \Crm{\rx}{\Lman\times\Mman_2}{\Nman_2}$, i.e. when
\[ 
\phi\bigl( \EXP_{\Lman,\Mman_1,\Nman_1}\bigl(\Crm{\rx}{\Lman\times\Mman_1}{\Nman_1}\bigr) \bigr) 
\subset 
\EXP_{\Lman,\Mman_2,\Nman_2}\bigl(\Crm{\rx}{\Lman\times\Mman_2}{\Nman_2}\bigr).
\]

Also, in~\cite{Poenaru:PMIHES:1970} the space $\Crm{\infty}{\Lman\times\Mman}{\Nman}$ is denoted by $\Crm{\infty}{\Lman}{ \Crm{\infty}{\Mman}{\Nman}}$.
\end{remark}

\section{Differentiable approximations}\label{sect:diff_approx}
We prove in Lemma~\ref{lm:approx_C0r} below a variant of the standard (relative) approximation theorem about density of $\Cr{\sx}$ maps in $\Wr{0,\rx}$ topologies for $\sx\geq \rx$.
We also establish existence of homotopies between the initial map and its approximation.
This will be the principal fact for proving weak homotopy equivalences.

\subsection*{Historical remarks}
It is shown in the textbook~\cite[Section 6.7]{Steenrod:FibreBundles:1999} by N.~Steenrod that every continuous section $\func:\Mman\to\Eman$ of a locally trivial fibration $p:\Eman\to\Mman$ between smooth manifolds can be arbitrary close approximated by a $\Cr{\sx}$ section $\func':\Mman\to\Eman$ for any $\sx>0$.
Moreover, if $\func$ is already $\Cr{\sx}$ on a \myemph{neighbourhood} $\Uman$ of a closed subset $\Xman\subset\Mman$, then one can assume that $\func'=\func$ on some smaller neighbourhood $\Vman\subset\Uman$ of $\Xman$.
Similar technique was then described in another classical textbook~\cite[Chapter 2, Theorem 2.5]{Hirsch:DiffTop} by M.~Hirsch for the proof that $\Crm{\sx}{\Mman}{\Nman}$ is dense in $\Crm{\rx}{\Mman}{\Nman}$ for $\rx<\sx$ with respect to either strong or weak corresponding topologies.
A generalization of that statement was also given in~\cite[Theorem~A.3.1]{Neeb:AIF:2002} for the case when $\Mman$ is a finite-dimensional $\sigma$-compact manifold and $\Nman$ is a locally convex space topological vector space. 

An additional observation which is not mentioned in textbooks~\cite{Steenrod:FibreBundles:1999, Hirsch:DiffTop} but is essential for our purposes is that \myemph{the approximation $\func'$ is homotopic to the initial map $\func$}.
Probably the reason of not taking that homotopy to account is that in general \myemph{a homotopy does not induce a continuous path into the space $\Crm{\rx}{\Mman}{\Eman}$ endowed with strong topology}. 
This kind of problem was tried to avoid by J.~P.~Cerf~\cite[Chapter~4]{Cerf:BSMF:1961} by considering only smooth homotopies, see Remark~\ref{rem:Cerf_and_others}.
For the continuity of that path one should require, e.g. that the homotopy is compactly supported, e.g.~\cite[Chapter 2, \S3, page 43]{GolubitskiGuillemin:StableMats:ENG:1973}.
These questions are usually considered for approximation of maps between ANR's in $\Cr{0}$ category, e.g.~\cite[Chapter IV, Theorem~1.1]{Hu:Retracts:1965}.
Existence of such homotopies is proved in the unpublished manuscript~\cite[Proposition~3]{KrieglMichor:homotopy:2002}, and in~\cite[Theorem~11]{Wockel:AMB:2009} under wide assumptions on $\Eman$. %, but again only with respect to strong $\Cr{0}$ topology.
It is also expressed in~\cite{Glockner:0812.4713:2008} in a more general context via special subsets $\mathrm{conv}_{2}(\Uman) = \{ tx+(1-t)y \mid x,y\in\Uman \}$ of convex hulls of open sets $\Uman$ in locally convex vector spaces, c.f.~\eqref{equ:gluing_map_q} below and Remark~\ref{rem:comparison_with_Glockner}.

As an application of existence of arbitrary small homotopies to a smooth approximation it is shown in~\cite[Corollary II.13]{MullerWockel:AG:2009} that if  $K$ is a Lie group modelled on a locally convex space and $M$ a finite-dimensional connected manifold with corners, then each continuous principal $K$-bundle over $M$ is continuously equivalent to a smooth principal bundle.
Moreover, two smooth principal $K$-bundles over $M$ are smoothly equivalent if and only if they are continuously equivalent.

Let us also mention that the technical assumption that \myemph{$\func$ must be $\Cr{\sx}$ on a neighbourhood of $\Xman$} is presented in all mentioned above papers and it due to gluing technique via partitions of unity.
It prevents to directly extend absolute case $\sAA \subset \sBB$ to the relative case $\AfX \subset \BfX$ and was the reason of introducing a notion of stabilizing pair of isotopies, see Remark~\ref{rem:non_ext_problem}.

\medskip 

In what follows let $0\leq \rx\leq \sx \leq \infty$, $\Lman,\Mman,\Nman$ be smooth manifolds, $\Xman\subset\Lman\times\Mman$ a closed subset, $\sBB \subset \Crm{0,\rx}{\Lman\times\Mman}{\Nman}$ an open subset, and $\sFunc\in\sBB$.
For a map $\pfunc\in\Crm{\sx}{\Lman\times\Mman}{\Nman}$ we also put 
\begin{align*}
\BfX &= \{ \gfunc \in \sBB \mid \gfunc|_{\Xman} = \pfunc|_{\Xman} \}, &
\AfX &= \BfX \,\cap\, \Crm{\sx}{\Lman\times\Mman}{\Nman}.
\end{align*}

\begin{definition}\label{def:good_homotopy}
A map $\BXSiso:[0,1]\times\Lman\times\Mman \to \Nman$ is a \myemph{\fgood\ homotopy} of $\sFunc$, if
\begin{enumerate}[label={\rm(\alph*)}]
\item\label{enum:good_homotopy:0}
$\BXSiso$ is $\Cr{0,\rx}$ along $\Mman$;

\item\label{enum:good_homotopy:1}
$\BXSiso_0 = \sFunc$;

\item\label{enum:good_homotopy:2}
for each $\ptt\in[0,1]$ the map $\BXSiso_{\ptt}:\Lman\times\Mman\to \Nman$, $\BXSiso_{\ptt}(\ptl,\ptm)=\BXSiso(\ptt,\ptl,\ptm)$, belongs to $\sBB$ and coincides with $\sFunc$ on $\Xman$;

\item\label{enum:good_homotopy:3}
if $\sFunc$ is $\Cr{\sx}$ on some submanifold $\Wman\subset\Lman\times\Mman$, then the restriction of $\BXSiso$ to $[0,1]\times\Wman$ is $\Cr{\sx}$ as well.
\end{enumerate}
\end{definition}

\begin{lemma}\label{lm:approx_C0r}{\rm(cf.~\cite[Chapter 2, Theorem 2.5]{Hirsch:DiffTop})}
\begin{enumerate}[leftmargin=*, label={\rm(\arabic*)}]
\item\label{enum:lm:approx_C0r:incl_cont}
The natural inclusion $\incl:\Crm{\sx}{\Lman\times\Mman}{\Nman} \subset \Crm{0,\rx}{\Lman\times\Mman}{\Nman}$ is continuous from the topology $\Wr{\sx}$ to the topology $\Wr{0,\rx}$.

\item\label{enum:lm:approx_C0r:dense}
The subset $\Crm{\sx}{\Lman\times\Mman}{\Nman}$ is dense in $\Crm{0,\rx}{\Lman\times\Mman}{\Nman}$ in the topology $\Wr{0,\rx}$.

\item\label{enum:lm:approx_C0r:appr}
Suppose $\sFunc\in\sBB$ is a map being $\Cr{\sx}$ on some neighbourhood of $\Xman$.
Then there exists a \fgood\ homotopy $\BXSiso:[0,1]\times\Lman\times\Mman \to \Nman$ of $\sFunc$ such $\BXSiso_1 \in \Crm{\sx}{\Lman\times\Mman}{\Nman}$.
\end{enumerate}
\end{lemma}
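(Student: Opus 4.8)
The three assertions are of increasing depth, so the plan is to dispatch~\ref{enum:lm:approx_C0r:incl_cont} and~\ref{enum:lm:approx_C0r:dense} quickly and to put all the weight on~\ref{enum:lm:approx_C0r:appr}. For~\ref{enum:lm:approx_C0r:incl_cont} I would compare the two defining embeddings: $\Wr{\rx}$ on $\Crm{\rx}{\Lman\times\Mman}{\Nman}$ is initial with respect to the full prolongation $\ejr{\rx}$ into $\Cont{\Lman\times\Mman}{\Jr{\rx}{\Lman\times\Mman}{\Nman}}$, whereas $\Wr{0,\rx}$ is initial with respect to the partial prolongation $\jetM{\rx}{\Mman}$ into $\Cont{\Lman\times\Mman}{\Jr{\rx}{\Mman}{\Nman}}$. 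Composing the smooth forgetful map $\xi$ of diagram~\eqref{equ:jets_along_M} with the projection $\Lman\times\Jr{\rx}{\Mman}{\Nman}\to\Jr{\rx}{\Mman}{\Nman}$ produces a smooth $\rho$ with $\rho\circ\jr{\rx}{\Func}{}=\jetM{\rx}{\Mman}\Func$ for every $\Cr{\rx}$ map $\Func$; then $\indmap{\rho}$ is continuous by Lemma~\ref{lm:cotop}\ref{enum:lm:cotop::composition}, and the identity $\jetM{\rx}{\Mman}\circ\incl=\indmap{\rho}\circ\ejr{\rx}$ forces $\incl$ to be continuous from $\Wr{\rx}$ to $\Wr{0,\rx}$. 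The case $\rx\le\sx\le\infty$ follows by composing with the continuous passage $\Wr{\sx}\to\Wr{\rx}$ built into the weak topologies (for $\sx=\infty$, $\Wr{\infty}$ is generated by all $\Wr{\rx}$). Statement~\ref{enum:lm:approx_C0r:dense} is then the case $\Xman=\varnothing$ of~\ref{enum:lm:approx_C0r:appr}: its hypothesis on $\Cr{\sx}$-regularity near $\Xman$ is vacuous, and applying~\ref{enum:lm:approx_C0r:appr} with $\sBB$ an arbitrary $\Wr{0,\rx}$-neighbourhood of a given map yields an endpoint $\BXSiso_1\in\sBB\cap\Crm{\sx}{\Lman\times\Mman}{\Nman}$.

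For~\ref{enum:lm:approx_C0r:appr}, I would first use Lemma~\ref{lm:base_W0r} to translate membership in the open set $\sBB$ into finitely many $\Cr{0,\rx}$-norm conditions: there are a compact $\Kman^{*}\subset\Lman\times\Mman$ (the union of the compacta occurring in a basic neighbourhood $\cap_i\NN(\sFunc,\Kman_i,\Wman_i,\phi_i,\psi_i,\eps_i)\subset\sBB$) and a $\delta>0$ so that any $\Cr{0,\rx}$ map whose $\Cr{0,\rx}$-distance to $\sFunc$ on $\Kman^{*}$ is $<\delta$ already lies in $\sBB$. Next I would fix a proper $\Cinfty$ embedding $\Nman\subset\bR^{N}$ with a tubular-neighbourhood retraction $\pi$ of some positive continuous width, and an open $\Vman_{1}$ with $\Xman\subset\Vman_{1}\subset\overline{\Vman_{1}}$ inside the neighbourhood where $\sFunc$ is already $\Cr{\sx}$. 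Gluing the local convolution approximations of Lemma~\ref{lm:approx_by_convol} by a partition of unity subordinate to a locally finite atlas, leaving $\sFunc$ untouched on $\Vman_{1}$, and retracting the ambient result through $\pi$, I obtain a map $\GFunc\in\Crm{\sx}{\Lman\times\Mman}{\Nman}$ which coincides with $\sFunc$ on $\Vman_{1}$, is pointwise within the tube of $\sFunc$ everywhere, and is arbitrarily $\Cr{0,\rx}$-close to $\sFunc$ on $\Kman^{*}$.

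I would then define the homotopy by the gluing map
\begin{equation}\label{equ:gluing_map_q}
\BXSiso(\ptt,\ptl,\ptm)=\pi\bigl((1-\ptt)\,\sFunc(\ptl,\ptm)+\ptt\,\GFunc(\ptl,\ptm)\bigr),
\end{equation}
which is well defined because for each $(\ptl,\ptm)$ the segment joining the nearby points $\sFunc(\ptl,\ptm),\GFunc(\ptl,\ptm)\in\Nman$ stays in the tube. The four requirements of Definition~\ref{def:good_homotopy} would be checked directly: the inner convex combination is $\Cr{0,\rx}$ along $\Mman$ into $\bR^{N}$ (its $\Mman$-derivatives up to order $\rx$ are the corresponding combinations of those of $\sFunc$ and $\GFunc$, continuous in $(\ptt,\ptl,\ptm)$), and postcomposition with the smooth $\pi$ keeps it $\Cr{0,\rx}$ by Lemma~\ref{lm:C0r_forward_composition}\ref{enum:lm:C0r_forward_composition:a}, giving~\ref{enum:good_homotopy:0} — equivalently, by Corollary~\ref{cor:exp_law_C0r_SLMN} with $\Dman=[0,1]$, $\BXSiso$ is a genuine path in $\Crm{0,\rx}{\Lman\times\Mman}{\Nman}$. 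Since $\pi|_{\Nman}=\id_{\Nman}$ we get $\BXSiso_0=\sFunc$ and $\BXSiso_1=\GFunc\in\Crm{\sx}{\Lman\times\Mman}{\Nman}$, which is~\ref{enum:good_homotopy:1} and the sought endpoint. On $\Vman_{1}\supset\Xman$ the combination is the constant segment $\sFunc$, so $\BXSiso_\ptt\equiv\sFunc$ near $\Xman$; combined with the uniform estimate $\sup_{\ptt}\cnorm{\BXSiso_\ptt-\sFunc}{0,\rx}{\Kman^{*}}\le C\,\cnorm{\GFunc-\sFunc}{0,\rx}{\Kman^{*}}$ (the chain rule through $\pi$ over the compact $\Kman^{*}$, the factor $\ptt\le1$ only helping) this gives~\ref{enum:good_homotopy:2} once $\GFunc$ is taken close enough. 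Finally, if $\sFunc$ is $\Cr{\sx}$ on a submanifold $\Wman$, then $\sFunc|_{\Wman}$ and $\GFunc|_{\Wman}$ are both $\Cr{\sx}$, hence so is the combination on $[0,1]\times\Wman$ and therefore $\BXSiso$, which is~\ref{enum:good_homotopy:3}.

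The genuinely delicate point — and the reason the naive ``replace $\sFunc$ by its smoothing'' argument is not enough — is the triple control imposed on $\GFunc$: it must be globally $\Cr{0}$-close to $\sFunc$ so that~\eqref{equ:gluing_map_q} is defined and the path never leaves the tube, $\Cr{0,\rx}$-close on $\Kman^{*}$ so that the \emph{entire} segment (not merely its endpoint) remains inside the open set $\sBB$, and identically equal to $\sFunc$ near $\Xman$ so that the homotopy is stationary on $\Xman$ and preserves $\Cr{\sx}$-regularity there. Securing all three simultaneously through the partition-of-unity gluing of the local convolutions, with estimates uniform in the parameter $\ptt$, is the crux; the remaining steps are formal applications of the exponential law and of the properties of compact-open topologies established in the previous sections.
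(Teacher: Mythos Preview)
Your argument for~\ref{enum:lm:approx_C0r:incl_cont} matches the paper's. For~\ref{enum:lm:approx_C0r:dense} and~\ref{enum:lm:approx_C0r:appr} you invert the paper's dependency: you prove~\ref{enum:lm:approx_C0r:appr} directly and read off~\ref{enum:lm:approx_C0r:dense} as the case $\Xman=\varnothing$, whereas the paper proves~\ref{enum:lm:approx_C0r:dense} first (by the convolution/partition-of-unity argument you sketch) and then \emph{uses} it inside~\ref{enum:lm:approx_C0r:appr}. Concretely, the paper takes \emph{any} $\Cr{\sx}$ map $\omega$ lying in a sufficiently small $\Wr{0,\rx}$-neighbourhood $\WW$ of $\sFunc$ (existence supplied by~\ref{enum:lm:approx_C0r:dense}) and sets $\BXSiso_{\ptt}=\rho\circ\bigl(\sFunc+\ptt(1-\lambda)(\omega-\sFunc)\bigr)$, where $\lambda$ is a $\Cr{\sx}$ cutoff with $\lambda\equiv1$ on $\Xman$ and $\supp\lambda$ contained in the neighbourhood on which $\sFunc$ is already $\Cr{\sx}$; the factor $(1-\lambda)$ is what pins $\BXSiso_{\ptt}\equiv\sFunc$ near $\Xman$ and makes $\BXSiso_1$ globally $\Cr{\sx}$. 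The neighbourhood $\WW$ is found not by explicit norm bookkeeping but by a soft continuity/compactness argument: the continuous map $(\ptt,a)\mapsto\rho\circ(\sFunc+\ptt(1-\lambda)(a-\sFunc))$ sends the compact set $[0,1]\times\{\sFunc\}$ to $\{\sFunc\}\subset\sBB$. Your route instead bakes the relative behaviour into the approximant $\GFunc$ itself (already equal to $\sFunc$ near $\Xman$, already within the tube everywhere) and then uses the cutoff-free segment $\pi\bigl((1-\ptt)\sFunc+\ptt\GFunc\bigr)$. Both work; the paper's version decouples density from the homotopy and asks very little of $\omega$, at the price of carrying $\lambda$ through the formula; yours front-loads all the control onto $\GFunc$ --- precisely the triple constraint you flag as the crux --- and is rewarded with a cleaner homotopy.
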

\begin{proof}
\ref{enum:lm:approx_C0r:incl_cont}
Notice that the inclusion $\incl:\Crm{\sx}{\Lman\times\Mman}{\Nman} \subset \Crm{0,\rx}{\Lman\times\Mman}{\Nman}$ is a composition of two upper arrows in the following commutative diagram:
\[
\xymatrix@C=1em{
\ \Crm{\sx}{\Lman\times\Mman}{\Nman} \
   \ar@/^5ex/[rr]^{\incl} \ar@{^(->}[r] &
\ \Crm{\rx}{\Lman\times\Mman}{\Nman} \
	\ar[r] \ar@{^(->}[d]_-{\ejr{\rx}} &
\ \Crm{0,\rx}{\Lman\times\Mman}{\Nman} \
    \ar@{^(->}[d]_-{\jetM{\rx}{\Mman}} \\
%%%%%%%%%%%
    &
\ \Crm{0}{\Lman\times\Mman}{\Jr{\rx}{\Lman\times\Mman}{\Nman}} \
	\ar[r]^-{\indmap{\xi}} &
\ \Crm{0}{\Lman\times\Mman}{\Jr{\rx}{\Mman}{\Nman}} \
}
\]
The first horizontal inclusion is continuous from topology $\Wr{\sx}$ to topology $\Wr{\rx}$ for $s\geq r$, and the second horizontal inclusion is continuous since the vertical arrows are topological embeddings and the lower arrow is continuous, see~\eqref{equ:jets_along_M}.

\ref{enum:lm:approx_C0r:dense}
The proof is similar to the standard approximation theorems, e.g.~\cite[Chapter 2, Theorems 2.4-2.6]{Hirsch:DiffTop} but instead of $\Cr{\rx}$ norms one should use $\Cr{0,\rx}$ norms and, in particular, Lemma~\ref{lm:approx_by_convol} which allows arbitrary close approximations of $\Cr{0,\rx}$ maps by $\Cinfty$ maps with respect to $\Cr{0,\rx}$ norms.
We leave the details for the reader.

\ref{enum:lm:approx_C0r:appr}
The proof is a combination of proofs of~\cite[Chapter IV, Theorem~1.1]{Hu:Retracts:1965} and~\cite[Theorem~2.5]{Hirsch:DiffTop}.
One can assume that $\Nman$ is a submanifold of $\bR^{\tau}$ for some large $\tau$, see~\cite[Proposition~3.3.9]{MargalefOuterelo:DT:1992} for manifolds with corners.
Let $\Vman$ be a tubular neighbourhood of $\Nman$ in $\bR^{\tau}$ and $\rho:\Vman\to\Nman$ a smooth retraction%
\footnote{\,If $\Nman$ is a manifold with corners, then existence of such an embedding $\Nman\subset\bR^{\tau}$ is shown in~\cite[Proposition~3.3.9]{MargalefOuterelo:DT:1992}, and existence of tubular neighbourhoods of $\Nman$ in $\bR^{\tau}$ is proved in~\cite[\S5, Th\'eor\`eme~1]{Douady:SHC:1961}, see also~\cite[Theorem~2.7]{AmmannIonescuNistor:DM:2006}.}.
Then by Lemma~\ref{lm:cotop}\ref{enum:lm:cotop::subbase_prop:open_subset}, $\Crm{0,\rx}{\Lman\times\Mman}{\Vman}$ can be regarded as an open subset of $\Crm{0,\rx}{\Lman\times\Mman}{\bR^{\tau}}$ with respect to the topology $\Wr{0,\rx}$.
Moreover, by Lemma~\ref{lm:C0r_forward_composition}, $\rho$ induces a continuous map
\begin{gather*}
\indmap{\rho}: \Crm{0,\rx}{\Lman\times\Mman}{\Vman} \to \Crm{0,\rx}{\Lman\times\Mman}{\Nman},
\qquad \vfunc \mapsto  \rho\circ \vfunc.
\end{gather*}

Let $\Uman$ be an open neighbourhood of $\Xman$ on which $\sFunc$ is $\Cr{\sx}$.
Fix any $\Cr{\sx}$ function $\lambda:\Lman\times\Mman\to[0,1]$ such that $\lambda=1$ on $\Xman$ and $\lambda=0$ on some neighbourhood of $\Mman\setminus\Uman$ and define the following map
\begin{gather*}
\eta: [0,1] \times \Crm{0,\rx}{\Lman\times\Mman}{\Nman} \to \Crm{0,\rx}{\Lman\times\Mman}{\bR^{\tau}}, \\
\eta(\ptt,a) = \sFunc + \ptt (1-\lambda) (a - \sFunc).
\end{gather*}

One easily checks that the addition and multiplication by scalars in $\Crm{0,\rx}{\Lman\times\Mman}{\bR^{\tau}}$ is continuous.
This implies that $\eta$ is continuous as well with respect to the corresponding topologies $\Wr{0,\rx}$.
Hence the following set $\mathcal{Y}:= \eta^{-1}\bigl( \Crm{0,\rx}{\Lman\times\Mman}{\Vman} \bigr)$ is open in the space $[0,1] \times\Crm{0,\rx}{\Lman\times\Mman}{\Nman}$, and we get a well-defined continuous map $q = \indmap{\rho}\circ \eta: \ \mathcal{Y} \to \Crm{0,\rx}{\Lman\times\Mman}{\Nman}$
\begin{equation}\label{equ:gluing_map_q}
%q = \indmap{\rho}\circ \eta: \ \mathcal{Y} \to \Crm{0,\rx}{\Lman\times\Mman}{\Nman},
%\qquad
q(\ptt,a) = \rho\circ\bigl(\sFunc + \ptt (1-\lambda) \cdot (a - \sFunc)\bigr).
\end{equation}

Evidently, $q([0,1]\times\{\sFunc\})=\{\sFunc\}$.
Hence $q^{-1}(\sBB)$ is an open neighbourhood of $[0,1]\times\{\sFunc\}$ in $\mathcal{Y}$ and therefore in $[0,1]\times \Crm{0,\rx}{\Lman\times\Mman}{\Nman}$.
Due to compactness of $[0,1]$, there exists a $\Wr{0,\rx}$-open neighbourhood $\WW \subset \sBB$ of $\sFunc$ such that $[0,1]\times\WW \subset q^{-1}(\sBB)$.

Now by~\ref{enum:lm:approx_C0r:dense} there exists a $\Cr{\sx}$ map $\omega\in\WW$.
Hence one can define the map $\BXSiso:[0,1]\times\Lman\times\Mman\to\Nman$ to be the adjoint map of the restriction of $q$ onto $[0,1]\times\{\omega\}$, that is
\[
	\BXSiso(\ptt,\ptl,\ptm) = q(\ptt,\omega)(\ptl,\ptm) =
	  \rho\circ\bigl(\sFunc + \ptt (1-\lambda) \cdot (\omega - \sFunc)\bigr)(\ptl,\ptm).
\]
We claim that $\BXSiso$ is the desired \fgood\ homotopy of $\sFunc$.
Indeed, since $\sFunc$ is $\Cr{0,\rx}$ along $\Mman$ and $\omega$ is $\Cr{\sx}$, the latter formula implies that
\begin{itemize}
\item
$\BXSiso$ is $\Cr{0,\rx}$ along $\Mman$ map,
\item
$\BXSiso_0=\sFunc$,
\item
$\BXSiso_{\ptt}=\sFunc$ on $\Xman$ and $\BXSiso_{\ptt} = q(\ptt,\omega) \in \sBB$ for all $\ptt\in[0,1]$,
\item
if $\sFunc$ is $\Cr{\sx}$ on some submanifold $\Wman\subset\Lman\times\Mman$, then the restriction of $\BXSiso$ to $[0,1]\times\Wman$ is $\Cr{\sx}$ as well,
\end{itemize}
i.e. all conditions of Definition~\ref{def:good_homotopy} hold.
Moreover, the latter map $\BXSiso_1$ is $\Cr{\sx}$.
\end{proof}

\begin{corollary}\label{cor:suff_cond_for_weh_AfX_BfX}
Let $\pfunc\in\Crm{\sx}{\Lman\times\Mman}{\Nman}$ and $\beta:(\Disk{k+1}, S^{k}) \to (\BfX, \AfX)$ be a continuous map of pairs such that its adjoint map
\[
	\BFunc:\Disk{k+1}\times\Lman\times\Mman\to\Nman,
	\qquad
	\BFunc(\pts,\ptl,\ptm) = \beta(\pts)(\ptl,\ptm),
\]
is $\Cr{\sx}$ on some neighbourhood of $\Disk{k+1} \times \Xman$.
Then $\beta$ is homotopic as a map of pairs to a map into $\AfX$.
\end{corollary}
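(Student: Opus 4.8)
The plan is to reduce the statement to the non-parametrized approximation result of Lemma~\ref{lm:approx_C0r}\ref{enum:lm:approx_C0r:appr} by absorbing the disk $\Disk{k+1}$ into the parameter manifold. First I would pass to adjoints: by the exponential law of Corollary~\ref{cor:exp_law_C0r_SLMN} (with $\Dman=\Disk{k+1}$), the map $\beta:\Disk{k+1}\to\BfX\subset\Crm{0,\rx}{\Lman\times\Mman}{\Nman}$ corresponds to the map $\BFunc\in\Crm{0,\rx}{(\Disk{k+1}\times\Lman)\times\Mman}{\Nman}$ from the statement, which is $\Cr{0,\rx}$ along $\Mman$ with parameter manifold $\Lman':=\Disk{k+1}\times\Lman$ (a manifold with corners). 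I would then set up Lemma~\ref{lm:approx_C0r} for the data $\Lman'$, $\Mman$, $\Nman$, the closed set $\Xman':=\Disk{k+1}\times\Xman$, the reference map $\pfunc'(\pts,\ptl,\ptm):=\pfunc(\ptl,\ptm)$, and the open set $\sBB':=\EXP\bigl(\Cont{\Disk{k+1}}{\sBB}\bigr)$, which is open by Lemma~\ref{lm:cotop}\ref{enum:lm:cotop::subbase_prop:open_subset}. Since $\beta(\Disk{k+1})\subset\sBB$ we get $\BFunc\in\sBB'$, since $\beta(\pts)=\pfunc$ on $\Xman$ we get $\BFunc=\pfunc'$ on $\Xman'$, and the hypothesis that $\BFunc$ is $\Cr{\sx}$ near $\Disk{k+1}\times\Xman=\Xman'$ is precisely the hypothesis of Lemma~\ref{lm:approx_C0r}\ref{enum:lm:approx_C0r:appr}.

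Next I would apply that lemma to obtain a preserving homotopy $\BXSiso:[0,1]\times\Lman'\times\Mman\to\Nman$ of $\BFunc$ with $\BXSiso_1\in\Crm{\sx}{\Lman'\times\Mman}{\Nman}$, and transport it back through the exponential law (Corollary~\ref{cor:exp_law_C0r_SLMN} with $\Dman=[0,1]\times\Disk{k+1}$) to a continuous homotopy $H:[0,1]\times\Disk{k+1}\to\Crm{0,\rx}{\Lman\times\Mman}{\Nman}$ with $H_0=\beta$. The defining properties of $\BXSiso$ give $H_\ptt(\pts)\in\sBB$ and $H_\ptt(\pts)=\pfunc$ on $\Xman$ for all $(\ptt,\pts)$, so $H$ takes values in $\BfX$; and since $\BXSiso_1$ is globally $\Cr{\sx}$, each $H_1(\pts)$ is $\Cr{\sx}$, whence $H_1(\Disk{k+1})\subset\AfX$.

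The main obstacle is the boundary behaviour: I must verify that $H$ is a homotopy \emph{of pairs} into $(\BfX,\AfX)$, i.e. that $H([0,1]\times S^k)\subset\AfX$ and that $H|_{[0,1]\times S^k}$ is continuous into the \emph{stronger} topology $\Wr{\sx}$ of $\AfX$, so that $H$ is $\Wr{\sx}$-admissible in the sense of Lemma~\ref{lm:char_whe}. The abstract preserving property does not suffice here, because the hypothesis $\beta(S^k)\subset\AfX$ only says that $\beta|_{S^k}:S^k\to\Crm{\sx}{\Lman\times\Mman}{\Nman}$ is $\Wr{\sx}$-continuous, equivalently (Lemma~\ref{lm:exp_law_C0r}) that $\BFunc|_{S^k\times\Lman\times\Mman}$ is $\Cr{0,\sx}$ along $\Lman\times\Mman$; it does \emph{not} assert that $\BFunc$ is jointly $\Cr{\sx}$ on $S^k\times\Lman\times\Mman$, so the good-homotopy clause about restriction to submanifolds cannot be invoked.

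To circumvent this I would use the \emph{explicit form} $\BXSiso_\ptt=\rho\circ\bigl(\BFunc+\ptt(1-\lambda)(\omega-\BFunc)\bigr)$ of the homotopy produced in the proof of Lemma~\ref{lm:approx_C0r}\ref{enum:lm:approx_C0r:appr}, where $\lambda$ and $\omega$ are jointly $\Cr{\sx}$ and $\rho$ is a smooth retraction. Restricting to $\pts\in S^k$ and combining the $\Cr{0,\sx}$-along-$\Lman\times\Mman$ regularity of $\BFunc|_{S^k\times\Lman\times\Mman}$ with the $\Cr{\sx}$-regularity of $\lambda,\omega$ — using the closure of $\Cr{0,\sx}$ maps under sums, products with $\Cr{\sx}$ functions, and post-composition with the smooth $\rho$ (Lemma~\ref{lm:C0r_forward_composition} together with the continuity of the linear operations noted in the proof of Lemma~\ref{lm:approx_C0r}) — I would conclude that $\BXSiso|_{[0,1]\times S^k\times\Lman\times\Mman}$ is $\Cr{0,\sx}$ along $\Lman\times\Mman$. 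By Lemma~\ref{lm:exp_law_C0r} (with $\Dman=[0,1]\times S^k$) this is exactly the $\Wr{\sx}$-continuity of $H|_{[0,1]\times S^k}$, and in particular every $H_\ptt(\pts)$ with $\pts\in S^k$ is $\Cr{\sx}$ and hence lies in $\AfX$. Thus $H$ is the desired $\Wr{\sx}$-admissible homotopy of pairs deforming $\beta$ into a map with image in $\AfX$.
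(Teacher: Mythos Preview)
Your overall approach matches the paper's: absorb the disk into the parameter manifold via Corollary~\ref{cor:exp_law_C0r_SLMN}, identify $\UNbh := q(\Cont{\Disk{k+1}}{\sBB})$ as an open subset of $\Crm{0,\rx}{(\Disk{k+1}\times\Lman)\times\Mman}{\Nman}$, apply Lemma~\ref{lm:approx_C0r}\ref{enum:lm:approx_C0r:appr} to $\BFunc$ with the data $(\UNbh,\Disk{k+1}\times\Xman,\sx)$, and transport the resulting homotopy back.

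The divergence is in the boundary step, and it comes from over-reading the conclusion. ``Homotopic as a map of pairs'' here means only that the homotopy $\gamma:[0,1]\times\Disk{k+1}\to\BfX$ satisfies $\gamma([0,1]\times S^k)\subset\AfX$ \emph{set-theoretically}; the corollary does not assert $\Wr{\sx}$-admissibility in the sense of Lemma~\ref{lm:char_whe}. For that set-theoretic inclusion the paper simply invokes property~\ref{enum:good_homotopy:3} of Definition~\ref{def:good_homotopy} on each \emph{individual} slice $\Wman=\{\pts\}\times\Lman\times\Mman$ with $\pts\in S^k$: the hypothesis $\beta(\pts)\in\AfX$ says precisely that $\BFunc|_{\{\pts\}\times\Lman\times\Mman}$ is $\Cr{\sx}$, so the preserving homotopy restricted to $[0,1]\times\{\pts\}\times\Lman\times\Mman$ is $\Cr{\sx}$, whence $\gamma(\ptt,\pts)\in\AfX$. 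So the good-homotopy clause \emph{can} be invoked---just not with $\Wman=S^k\times\Lman\times\Mman$ all at once---and your inspection of the explicit formula $\rho\circ(\BFunc+\ptt(1-\lambda)(\omega-\BFunc))$ is not needed. Note also that the corollary's hypothesis, read as an ordinary map of pairs, gives only $\beta(S^k)\subset\AfX$, not $\Wr{\sx}$-continuity of $\beta|_{S^k}$; the latter would be the $\Wr{\sx}$-admissibility assumption, which is strictly stronger.
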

\begin{proof}
Consider the following homeomorphisms being the compositions of the corresponding ``exponential laws'':
\begin{multline*}
 q: \Cont{\Disk{k+1}}{\Crm{0,\rx}{\Lman\times\Mman}{\Nman}}
	\ \stackrel{\eqref{equ:exp_law_C0r}}{\cong} \
	\Cont{\Disk{k+1}}{\Cont{\Lman}{\Crm{\rx}{\Mman}{\Nman}}}
	\ \stackrel{\eqref{equ:exp_law_general}}{\cong} \\ \cong
	\Cont{\Disk{k+1}\times\Lman}{\Crm{\rx}{\Mman}{\Nman}}
	\ \stackrel{\eqref{equ:exp_law_C0r}}{\cong} \
	\Crm{0,\rx}{(\Disk{k+1}\times\Lman)\times\Mman}{\Nman}.
\end{multline*}
Then the adjoint map $\BFunc$ to $\beta$ is the image of $q$, i.e. $\BFunc=q(\beta)$.

As $\sBB$ is open in $\Crm{0,\rx}{\Lman\times\Mman}{\Nman}$, we see that $\Cont{\Disk{k+1}}{\sBB}$ is open in $\Cont{\Disk{k+1}}{\Crm{0,\rx}{\Lman\times\Mman}{\Nman}}$, whence $\UNbh := q\bigl( \Cont{\Disk{k+1}}{\sBB} \bigr)$ is open in the space $\Crm{0,\rx}{(\Disk{k+1}\times\Lman)\times\Mman}{\Nman}$.
In particular, $\UNbh$ is an open neighbourhood of $\BFunc$.

Since by assumption $\BFunc$ is $\Cr{\sx}$ on some neighbourhood $\Disk{k+1}\times\Xman$, we get from Lemma~\ref{lm:approx_C0r}\ref{enum:lm:approx_C0r:appr} that there is a \betagood\ homotopy
\[G:[0,1]\times \Disk{k+1} \times \Lman \times\Mman \to \Nman\]
of $\BFunc$ such that $G_1$ is $\Cr{\sx}$.
In other words, $G$ has the following properties.
\begin{enumerate}[leftmargin=*, label={\rm(\alph*)}]
\item
$G$ is $\Cr{0,\rx}$ along $\Mman$.

\item
$G_0 = \BFunc$.

\item
For each $\ptt\in[0,1]$ the map $G_{\ptt}\in\UNbh$ and coincides with $\BFunc$ near $\Disk{k+1}\times\Xman$.

\item
If $\BFunc$ is $\Cr{\sx}$ on some submanifold $\Wman\subset\Disk{k+1}\times\Lman\times\Mman$, then the restriction of $G$ to $[0,1]\times\Wman$ is $\Cr{\sx}$ as well.
In particular, since for each $\pts\in S^{k}$ the restriction $\BFunc: \pts\times \Lman\times\Mman \to\Nman$ is $\Cr{\sx}$, we obtain that the restriction of $G$ to $[0,1]\times\pts\times \Lman\times\Mman$ is $\Cr{\sx}$ as well.

\item
$G_1\in \Crm{\sx}{\Disk{k+1}\times\Lman\times\Mman}{\Nman}$.
\end{enumerate}
Consider the adjoint map
\[
\gamma:	[0,1]\times\Disk{k+1} \to \Crm{0,\rx}{\Lman\times\Mman}{\Nman},
\qquad
\gamma(\ptt,\pts)(\ptl,\ptm) = G(\ptt,\pts,\ptl,\ptm).
\]
Then the above properties of $G$ mean that
\begin{enumerate}[label={\rm(\alph*)}]
\item
$\gamma$ is continuous;

\item
$\gamma(0) = \BFunc$;

\item
$\gamma([0,1]\times\Disk{k+1}) \subset \BfX$;

\item
$\gamma([0,1]\times S^k) \subset \AfX$;

\item
$\gamma(1 \times\Disk{k+1}) \subset \BfX \cap \Crm{\sx}{\Lman\times\Mman}{\Nman} = \AfX$.
\end{enumerate}
Hence $\gamma$ is the desired homotopy.
\end{proof}

%%%%%%%%%%%%%%%%%%%%%%%%%%%%%%%%%%%%%%%
\section{Proof of~\ref{enum:th:B_CsMN__B_rel_param:abs} and~\ref{enum:th:B_CsMN__B_rel_param:rel} of Theorem~\ref{th:B_CsMN__B_rel_param}}\label{sect:proof:th:B_CsMN__B_rel_param}
As in Section~\ref{sect:diff_approx} let $0\leq \rx\leq \sx \leq \infty$, $\Lman,\Mman,\Nman$ be smooth manifolds, $\Xman\subset\Lman\times\Mman$ a closed subset, 
$\sBB \subset \Crm{0,\rx}{\Lman\times\Mman}{\Nman}$ an open subset, and $\sAA = \sBB \,\cap\, \Crm{\sx}{\Lman\times\Mman}{\Nman}$.
For a mapping $\pfunc\in\Crm{\sx}{\Lman\times\Mman}{\Nman}$ we also denote 
$\BfX = \{ \gfunc \in \sBB \mid \gfunc|_{\Xman} = \pfunc|_{\Xman} \}$ and $\AfX = \BfX \,\cap\, \Crm{\sx}{\Lman\times\Mman}{\Nman}$.

First we reformulate Definition~\ref{def:stab_f_near_X_no_L} more explicitly for the context of Theorem~\ref{th:B_CsMN__B_rel_param}.
\begin{definition}\label{def:stab_f_near_X_U}
We will say that a pair of $\Cr{\sx}$ isotopies 
\begin{align}\label{equ:stab_iso}
	&\LMiso=(\Hl,\Hm):[0,1]\times\Lman\times\Mman \to \Lman\times\Mman, &
	&\Niso:[0,1]\times\Nman \to \Nman
\end{align}
\myemph{leafwise stabilizes $\pfunc$ near $\Xman$} whenever the following conditions hold:
\begin{enumerate}[label={\rm(\alph*)}]
\item\label{enum:def:stab_f_near_X_U:H_morphism}
for each $\ptt\in[0,1]$ the first coordinate function $\lambda:[0,1]\times\Lman\times\Mman\to\Lman$ of $\LMiso$ does not depend on $\ptm\in\Mman$, so each $\LMiso_{\ptt}$ is an automorphism of the trivial fibration $\Lman\times\Mman \to \Lman$;

\item\label{enum:def:stab_f_near_X_U:H0_Phi0}
$\LMiso_0 = \id_{\Lman\times\Mman}$ and $\Niso_0 = \id_{\Nman}$;

\item\label{enum:def:stab_f_near_X_U:HtX_IntX}
$\LMiso_{\ptt}(\Xman) \subset \Int{\Xman}$ for $0 < \ptt \leq 1$;

\item\label{enum:def:stab_f_near_X_U:Pt_f_Ht__f}
$\Niso_{\ptt} \circ \pfunc \circ \LMiso_{\ptt} = \pfunc$ on some neighbourhood of $\Xman$ (which may depend on $\ptt$) for each $\ptt\in(0,1]$.
\end{enumerate}
\end{definition}

The following theorem is just a reformulation of the first two statements of Theorem~\ref{th:B_CsMN__B_rel_param} in terms of $\Cr{0,\rx}$ maps. 
\begin{theorem}\label{th:B_CsMN__B_rel_param_U}
\begin{enumerate}[leftmargin=*, label={\rm(\arabic*)}]
\item\label{enum:th:B_CsMN__B_rel_param_U:abs}
The inclusion $\incl: \sAA \subset \sBB$ is a weak homotopy equivalence.

\item\label{enum:th:B_CsMN__B_rel_param_U:rel}
Suppose there exists a pair of $\Cr{\sx}$ isotopies~\eqref{equ:stab_iso} which leafwise stabilizes $\pfunc$ near $\Xman$.
Then the inclusion $\incl: \AfX \subset \BfX$ is a weak homotopy equivalence as well.
\end{enumerate}
\end{theorem}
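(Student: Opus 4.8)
The plan is to reduce both statements to the criterion of Lemma~\ref{lm:char_whe}. The inclusion $\incl$ is injective, and by Lemma~\ref{lm:approx_C0r}\ref{enum:lm:approx_C0r:incl_cont} it is continuous from the topology $\sigma=\Wr{\sx}$ on $\AfX$ to the topology $\tau=\Wr{0,\rx}$ on $\BfX$; so (applying the lemma on each path component) it suffices to verify condition \ref{enum:lm:char_whe:adm_maps_pairs}: every $\sigma$-admissible map $\beta:(\Disk{k+1},S^{k},\bpt)\to(\BfX,\AfX,\dif)$ is homotopic through a $\sigma$-admissible homotopy of pairs to a map into $\AfX$. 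Since statement \ref{enum:th:B_CsMN__B_rel_param_U:abs} is the special case $\Xman=\varnothing$ of \ref{enum:th:B_CsMN__B_rel_param_U:rel} (take $\LMiso=\id$ and $\Niso=\id$, so that conditions \ref{enum:def:stab_f_near_X_U:HtX_IntX} and \ref{enum:def:stab_f_near_X_U:Pt_f_Ht__f} hold vacuously), I would prove only \ref{enum:th:B_CsMN__B_rel_param_U:rel}. Let $\BFunc$ be the adjoint map of $\beta$; since $\beta|_{S^{k}}$ is $\sigma$-continuous, $\BFunc$ is $\Cr{\sx}$ on each slice $\pts\times\Lman\times\Mman$, $\pts\in S^{k}$, and $\BFunc=\pfunc$ on $\Disk{k+1}\times\Xman$. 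The obstruction to applying Corollary~\ref{cor:suff_cond_for_weh_AfX_BfX} directly is that bare agreement with $\pfunc$ on $\Xman$ does not make $\BFunc$ of class $\Cr{\sx}$ on a \emph{neighbourhood} of $\Disk{k+1}\times\Xman$.

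The stabilizing isotopies are what repair this. I would first deform $\beta$ by
\[ H(\ptt,\pts)\ =\ \Niso_{\ptt}\circ\beta(\pts)\circ\LMiso_{\ptt}, \qquad (\ptt,\pts)\in[0,1]\times\Disk{k+1}. \]
Here $\beta(\pts)\circ\LMiso_{\ptt}$ is a legitimate $\Cr{0,\rx}$ map because, by condition \ref{enum:def:stab_f_near_X_U:H_morphism}, each $\LMiso_{\ptt}$ is a morphism of the trivial fibration $\pr{\Lman}$; continuity of $(\ptt,\pts)\mapsto H(\ptt,\pts)$ into $\Crm{0,\rx}{\Lman\times\Mman}{\Nman}$ then follows from Lemmas~\ref{lm:C0r_backward_composition} and~\ref{lm:C0r_forward_composition}, together with the fact that $\ptt\mapsto\LMiso_{\ptt}$ and $\ptt\mapsto\Niso_{\ptt}$ are $\Cr{\sx}$ families. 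Using conditions \ref{enum:def:stab_f_near_X_U:H0_Phi0}, \ref{enum:def:stab_f_near_X_U:HtX_IntX}, \ref{enum:def:stab_f_near_X_U:Pt_f_Ht__f} and the computation recorded in Lemma~\ref{lm:stab_near_X_prop}, the homotopy $H$ has the key properties: $H(0,\pts)=\beta(\pts)$; for every $\ptt\in(0,1]$ the map $H(\ptt,\pts)$ coincides with $\pfunc$ on one and the same neighbourhood $\Uman_{\ptt}$ of $\Xman$, \emph{independent of} $\pts$, hence is $\Cr{\sx}$ near $\Xman$ and lies in $\BfX$; and for $\pts\in S^{k}$ the map $H(\ptt,\pts)$ is a composition of $\Cr{\sx}$ maps, so $H(\ptt,\pts)\in\AfX$ for all $\ptt$. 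Finally, since $\beta(\Disk{k+1})$ is compact and contained in the $\Wr{0,\rx}$-open set $\sBB$, continuity of $H$ and $H(0,\cdot)=\beta$ yield an $\eps\in(0,1]$ with $H([0,\eps]\times\Disk{k+1})\subset\sBB$; restricting to $\ptt\in[0,\eps]$ and reparametrising, $H$ becomes a $\sigma$-admissible homotopy of pairs from $\beta$ to $\beta':=H(\eps,\cdot)$ inside $(\BfX,\AfX)$.

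The adjoint of $\beta'$ now agrees with $\pfunc$, hence is $\Cr{\sx}$, on the neighbourhood $\Disk{k+1}\times\Uman_{\eps}$ of $\Disk{k+1}\times\Xman$. Therefore Corollary~\ref{cor:suff_cond_for_weh_AfX_BfX} applies to $\beta'$ and provides a homotopy of pairs from $\beta'$ to a map into $\AfX$, which is again $\sigma$-admissible by inspection of its construction (it is assembled from $\sigma$-continuous data and $\Cr{\sx}$ maps by $\Cr{\sx}$ operations). Concatenating the two homotopies shows that $\beta$ satisfies \ref{enum:lm:char_whe:adm_maps_pairs}; by Lemma~\ref{lm:char_whe} this proves that $\incl:\AfX\subset\BfX$ is a weak homotopy equivalence, and in particular (taking $\Xman=\varnothing$) statement \ref{enum:th:B_CsMN__B_rel_param_U:abs}.

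I expect the main obstacle to be the middle step: converting the leafwise smoothness of $\BFunc$ and the mere equality $\BFunc=\pfunc$ on $\Xman$ into genuine $\Cr{\sx}$-ness on a full neighbourhood of $\Disk{k+1}\times\Xman$, \emph{uniformly in the disk parameter}, while simultaneously keeping the deformation inside the open set $\sBB$ and keeping its restriction to $S^{k}$ continuous in the strong topology $\sigma$. The uniformity of $\Uman_{\ptt}$ in $\pts$ is exactly what makes the shrinking conditions \ref{enum:def:stab_f_near_X_U:HtX_IntX}--\ref{enum:def:stab_f_near_X_U:Pt_f_Ht__f} indispensable, and the compactness of $\Disk{k+1}$ is what lets one confine the isotopy deformation to $\sBB$.
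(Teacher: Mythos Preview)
Your proposal is correct and follows essentially the same approach as the paper: reduce to the criterion of Lemma~\ref{lm:char_whe}\ref{enum:lm:char_whe:adm_maps_pairs}, use the stabilizing isotopies to push $\beta$ to a map whose adjoint is $\Cr{\sx}$ on a genuine neighbourhood of $\Disk{k+1}\times\Xman$ (this is exactly the paper's implication \ref{enum:th:B_CsMN__B_rel_param_U:leaf_wise}$\Rightarrow$\ref{enum:th:B_CsMN__B_rel_param_U:make_Cs}, with the same computation recorded as Lemma~\ref{lm:stab_near_X_prop}), and then invoke Corollary~\ref{cor:suff_cond_for_weh_AfX_BfX}. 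Your compactness argument for producing $\eps$ and your remarks on $\sigma$-admissibility make explicit two points the paper leaves somewhat implicit.
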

 \begin{proof}
Consider the following conditions.
\begin{enumerate}[label={\rm(\roman*)}]
\item\label{enum:th:B_CsMN__B_rel_param_U:leaf_wise} 
There exists a pair of $\Cr{\sx}$ isotopies leafwise stabilizing $\pfunc$ near $\Xman$.

\item\label{enum:th:B_CsMN__B_rel_param_U:make_Cs} 
Every continuous map $\beta:(\Disk{k+1}, S^{k}) \to (\BfX, \AfX)$ is homotopic as a map of pairs to a map $\hat{\beta}$ whose adjoint mapping
\[
    \hat{\BFunc}:\Disk{k+1}\times\Lman\times\Mman\to\Nman, \qquad
    \hat{\BFunc}(\pts,\ptl,\ptm) = \beta(\pts)(\ptl,\ptm),
\]
is $\Cr{\sx}$ on \myemph{some neighbourhood} of $\Disk{k+1}\times\Xman$.

\item\label{enum:th:B_CsMN__B_rel_param_U:AfX_BfX_whe}
The inclusion $\incl: \AfX \subset \BfX$ is a weak homotopy equivalence.
\end{enumerate}

We claim that~\ref{enum:th:B_CsMN__B_rel_param_U:leaf_wise}$\Rightarrow$\ref{enum:th:B_CsMN__B_rel_param_U:make_Cs}$\Rightarrow$\ref{enum:th:B_CsMN__B_rel_param_U:AfX_BfX_whe}.
This will imply statement~\ref{enum:th:B_CsMN__B_rel_param_U:rel}.

Also, if $\Xman =\varnothing$, then condition~\ref{enum:th:B_CsMN__B_rel_param_U:make_Cs} trivially holds since $\Disk{k+1}\times\Xman=\varnothing$.
Therefore~\ref{enum:th:B_CsMN__B_rel_param_U:AfX_BfX_whe} will also hold, that is the inclusion $\incl:\sAA \equiv \AfX \subset \BfX \equiv \sBB$ will be a weak homotopy equivalence, and thus the implication \ref{enum:th:B_CsMN__B_rel_param_U:make_Cs}$\Rightarrow$\ref{enum:th:B_CsMN__B_rel_param_U:AfX_BfX_whe} gives a proof of~\ref{enum:th:B_CsMN__B_rel_param_U:abs}. 

\medskip

\ref{enum:th:B_CsMN__B_rel_param_U:make_Cs}$\Rightarrow$\ref{enum:th:B_CsMN__B_rel_param_U:AfX_BfX_whe}
By Lemma~\ref{lm:char_whe}\ref{enum:lm:char_whe:adm_maps_pairs} it suffices to show that each $\Wr{s}$-admissible map $\beta:(\Disk{k+1},S^{k}) \to (\BfX, \AfX)$ is homotopic via a $\Wr{s}$-admissible homotopy of pairs to a map into $\AfX$.

Condition~\ref{enum:th:B_CsMN__B_rel_param_U:make_Cs} says that such a map is homotopic as a map of pairs $(\Disk{k+1}, S^{k}) \to (\BfX, \AfX)$ to a map $\hat{\beta}$ whose adjoint mapping is $\Cr{\sx}$ near $\Disk{k+1}\times\Xman$.
Then by Corollary~\ref{cor:suff_cond_for_weh_AfX_BfX}, $\hat{\beta}$ is homotopic as a map of pairs to a map into $\AfX$.

\medskip 

\ref{enum:th:B_CsMN__B_rel_param_U:leaf_wise}$\Rightarrow$\ref{enum:th:B_CsMN__B_rel_param_U:make_Cs}
Let $\BFunc:\Disk{k+1}\times\Lman\times\Mman\to\Nman$ be the adjoint map of $\beta$, so 
\begin{equation}\label{equ:B_phi_on_X}
	\BFunc(\pts,\ptl,\ptm) = \beta(\pts)(\ptl,\ptm)= \pfunc(\ptl,\ptm) \ \text{for} \ (\pts,\ptl,\ptm)\in\Disk{k+1}\times\Xman.
\end{equation}

Let also~\eqref{equ:stab_iso} be two isotopies which leafwise stabilize $\pfunc$ near $\Xman$.
Then it easily follows from Lemmas~\ref{lm:C0r_forward_composition} and~\ref{lm:C0r_backward_composition} that the following map
\begin{gather*}
	\gamma:[0,1] \to \Crm{0,\rx}{(\Disk{k+1}\times\Lman)\times\Mman}{\Nman}, \\
%%%%%%%%%%%%%%%%%
	\gamma(\ptt)(\pts,\ptl,\ptm) = 
	  \Niso_{\ptt} \circ \BFunc_{\pts} \circ \LMiso_{\ptt}(\ptl,\ptm) = 
	  \Niso\bigl( \ptt, \BFunc\bigl(\pts, \LMiso_{\ptt}(\ptl,\ptm) \bigr)\bigr)
\end{gather*}
is \myemph{well-defined} and \myemph{continuous}.

\begin{sublemma}\label{lm:stab_near_X_prop}
For each $0<\ptt\leq 1$ there exists an open neighbourhood $\Vman_{\ptt}$ of $\Xman$ in $\Lman\times\Mman$, such that 
\begin{equation}
\gamma(\ptt)(\pts,\ptl,\ptm) = \pfunc(\ptl,\ptm), \qquad (\pts,\ptl,\ptm)\in \Disk{k+1}\times\Vman_{\ptt}.
\end{equation}
In particular, $\gamma(\ptt)$ is  $\Cr{\sx}$ near $\Disk{k+1}\times\Xman$.
\end{sublemma}
\begin{proof}
By assumption~\ref{enum:def:stab_f_near_X_U:Pt_f_Ht__f} of Definition~\ref{def:stab_f_near_X_U}, $\Niso_{\ptt} \circ \pfunc \circ \LMiso_{\ptt} = \pfunc$ on some neighbourhood $\Uman_{\ptt}$ of $\Xman$ for each $\ptt\in(0,1]$.
As $\LMiso_{\ptt}(\Xman) \subset \Int{\Xman}$ for $0 < \ptt \leq 1$, it also follows from assumption~\ref{enum:def:stab_f_near_X_U:HtX_IntX} that $\Xman\subset\Int{\bigl(\LMiso^{-1}_{\ptt}(\Xman)\bigr)}$.
We claim that $\Vman_{\ptt}:= \Uman_{\ptt} \cap \Int{\bigl(\LMiso^{-1}_{\ptt}(\Xman)\bigr)}$ satisfies the statement of our lemma.

Indeed, if $(\pts,\ptl,\ptm) \in\Disk{k+1}\times \Int{\LMiso^{-1}_{\ptt}(\Xman)}$, then $ \LMiso_{\ptt}(\ptl,\ptm) \in \Xman$, and therefore by~\eqref{equ:B_phi_on_X} 
\[ 
	\BFunc\bigl( \pts, \LMiso_{\ptt}(\ptl,\ptm)\bigr) =   \BFunc_{\pts} \circ \LMiso_{\ptt}(\ptl,\ptm) = \pfunc \circ \LMiso_{\ptt}(\ptl,\ptm).
\]
Hence if $\pts\in\Disk{k+1}$ and $(\ptl,\ptm) \in\Vman_{\ptt} = \Uman_{\ptt}\cap\Int{\bigl(\LMiso^{-1}_{\ptt}(\Xman)\bigr)}$, then 
\[
\gamma(\ptt)(\pts,\ptl,\ptm)  = \Niso_{\ptt} \circ \BFunc_{\pts} \circ \LMiso_{\ptt}(\ptl,\ptm) =
 \Niso_{\ptt} \circ \pfunc \circ \LMiso_{\ptt}(\ptl,\ptm) = 
 \pfunc(\ptl,\ptm).
\qedhere
\]
\end{proof}
Consider another (continuous due to Corollary~\ref{cor:exp_law_C0r_SLMN}) adjoint map to $\gamma$
\begin{align*}
&\GFunc: [0,\eps]\times \Disk{k+1} \to \Crm{0,\rx}{\Lman\times\Mman}{\Nman}, &
&\GFunc(\ptt,\pts)(\ptl,\ptm) = \Niso_{\ptt} \circ \BFunc \circ \LMiso_{\ptt}(\pts,\ptl,\ptm),
\end{align*}
being a homotopy of $\beta$.
As shown above, $\GFunc([0,\eps]\times \Disk{k+1}) \subset \BfX$.
Moreover, since $\beta(S^{k})$ consists of $\Cr{\sx}$ maps and $\LMiso$ and $\Niso$ are also $\Cr{\sx}$, it follows that $\GFunc([0,\eps]\times S^{k}) \subset \AfX$.
Hence $\GFunc$ is the desired homotopy of $\beta$ to a map $\gamma(\eps)$ such that its adjoint map $\GFunc_{\eps}$ is $\Cr{\sx}$ near $\Xman$.

Theorem~\ref{th:B_CsMN__B_rel_param_U} is completed.
\end{proof}

\begin{remark}\label{rem:non_ext_problem}\rm
Let $\beta:(\Disk{k+1}, S^{k}) \to (\BfX, \AfX)$ be a continuous map and $\BFunc:\Disk{k+1}\times\Lman\times\Mman\to\Nman$ be its adjoint map.
Then the restriction of $\BFunc$ to $\Disk{k+1}\times\Xman$ is given by
\[
    \BFunc(\pts,\ptl,\ptm)=\func(\ptl,\ptm).
\]
In particular, if $\Xman$ is a submanifold of $\Lman\times\Mman$, then $\Disk{k+1}\times\Xman$ is a submanifold of $\Disk{k+1}\times\Lman\times\Mman$ and $\BFunc$ is $\Cr{\sx}$ on $\Disk{k+1}\times\Xman$.
However one can not guarantee that $\BFunc$ is $\Cr{\sx}$ \myemph{on a neighbourhood} of $\Disk{k+1}\times\Xman$, and therefore can not immediately apply Lemma~\ref{lm:approx_C0r}\ref{enum:lm:approx_C0r:appr} to approximate $\BFunc$ by a $\Cr{\sx}$ map of all of $\Lman\times\Mman$ which coincides with $\BFunc$ on $\Disk{k+1}\times\Xman$.

Moreover, by Whitney extension theorem we can extend $\BFunc|_{\Disk{k+1}\times\Xman}$ to a $\Cr{\sx}$ map, and by previous technique extend it further to a global map $\hat{\BFunc}:\Disk{k+1}\times\Lman\times\Mman\to\Nman$, and even find a homotopy between $\BFunc$ and $\hat{\BFunc}$ relatively to $\Disk{k+1}\times\Xman$.
However, one can not guarantee that $\hat{\BFunc}$ and all that homotopy is contained in $\sBB$.
\end{remark}

\begin{example}
Let $\sBB = \{ \BFunc\in\Crm{1}{\bR}{\bR} \mid \func'(0)>0\}$, $\Xman = \{0\} \subset \bR$, and $\BFunc\in\sBB$ be given by $\BFunc(\ptm)=\ptm$.
Notice that $\BFunc|_{\Xman}: \Xman \to \bR$ is the zero function, and it extends for example to a function $\hat{\BFunc}\in\Crm{\infty}{\bR}{\bR}$ given by $\hat{\BFunc}(x) = -x$.
Then $\hat{\BFunc}$ does not belong to $\sBB$ and can not be connected with $\BFunc$ by a continuous path in $\sBB$.

This illustrates that the problem whether $\incl:\AfX \subset \BfX$ is a weak homotopy equivalence might essentially depend on $\func$, $\sBB$, $\Xman$, $\rx$ and $\sx$, c.f. \myemph{Lojasiewicz inequalities} in~\cite{Lojasiewicz:SM:1959, Malgrange:Ideals:1967}, and \myemph{no narrow fjord condition} in~\cite{RobertsSchmeding:1801.04126}.
\end{example}

\begin{remark}[Comparison with results of~\cite{Glockner:0812.4713:2008}]
\label{rem:comparison_with_Glockner}
\rm
As mentioned above, the statement that the inclusion $\incl:\Crm{\sx}{\Mman}{\Nman} \subset \Crm{\rx}{\Mman}{\Nman}$ is a weak homotopy equivalence follows from a very general~\cite[Proposition~6.1]{Glockner:0812.4713:2008}.

Let $\Qman$ be a topological space and $(\Qman_{\alpha})_{\alpha\in A}$ a directed family of subsets such that $\Qman_{\infty} := \cup_{\alpha\in A}\Qman_{\alpha}$ is dense in $\Qman$.
Endow each $\Qman_{\alpha}$ and $\Qman_{\infty}$ with some topologies such that all the inclusions $\Qman_{\alpha} \to \Qman_{\beta}$ for all $\alpha\leq\beta$, $\Qman_{\alpha} \to \Qman_{\infty}$, and $\Qman_{\infty} \to \Qman$ are continuous.
Then~\cite[Proposition~6.1]{Glockner:0812.4713:2008} claims that the inclusion $\Qman_{\infty}\subset\Qman$ is a weak homotopy equivalence, whenever 
$\Qman$ can be covered by \myemph{cores of well-filled charts} and $\Qman_{\infty}$ is \myemph{compactly retractive}.

We refer the reader for details and precise definitions to~\cite{Glockner:0812.4713:2008} however let us mention that in our case $\Qman=\Crm{\rx}{\Mman}{\Nman}$, $A=\{\alpha\}$ consists of a unique element, so $\Qman_{\alpha} = \Qman_{\infty} = \Crm{\sx}{\Mman}{\Nman}$, and property of being \myemph{compactly retractive}
automatically holds.
Moreover, a \myemph{well-filled} chart in $\Qman$ is a homeomorphism $\Qman \supset \Uman \xrightarrow{\eta} \Vman \subset \Eman$ of an open subset $\Uman$ of $\Qman$ onto an open subset $\Vman$ of a locally convex topological vector space $\Eman$ such that there exists an open subset $\Vman'\subset\Vman$ satisfying
\begin{enumerate}[label={\rm(\roman*)}]
\item\label{enum:cond:core:0} $t\func+(1-t)\gfunc\in \Vman'$ for all $\func,\gfunc\in\Vman'$, and
\item\label{enum:cond:core:1} $t\func+(1-t)\gfunc\in \Vman'_{\infty}:= \Vman' \cap \eta(\Uman\cap \Qman_{\infty})$ for all $\func,\gfunc\in\Vman'_{\infty}$.
\end{enumerate}
Then $\Uman' = \eta^{-1}(\Vman')$ is called a \myemph{core} of the well-filled chart $\eta$.

Thus by~\ref{enum:cond:core:0} any $\Cr{\rx}$ maps $\phi, \psi$ in the core $\Uman'$ are ``canonically'' homotopic in that core, in particular in the class of $\Cr{\rx}$ maps.
Moreover, due to~\ref{enum:cond:core:1}, if $\phi, \psi$ are $\Cr{\sx}$, then that homotopy consists of $\Cr{\sx}$ maps.
One can easily check that if $\rx\geq1$ then these properties hold for the inclusion $\Crm{\sx}{\Mman}{\Nman} \subset \Crm{\rx}{\Mman}{\Nman}$, since a neighbourhood of each $\func\in \Crm{\rx}{\Mman}{\Nman}$ can be identified with the space of sections of certain vector bundle over $\Mman$.
Then~\cite[Proposition~6.1]{Glockner:0812.4713:2008} implies that the inclusion is a weak homotopy equivalence.

Property~\ref{enum:cond:core:1} play in the proof of~\cite[Proposition~6.1]{Glockner:0812.4713:2008} the same role as condition~\ref{enum:good_homotopy:3} of Definition~\ref{def:good_homotopy} of \fgood\ homotopy (which is guaranteed by formula~\eqref{equ:gluing_map_q}), and expresses the same property as the last sentence of~\cite[Lemma~II.4]{MullerWockel:AG:2009}.
\end{remark}

\section{Maps of manifolds sending boundary to boundary}\label{sect:maps_of_pairs}
For smooth manifolds $\Mman,\Nman$, their subsets $\Yman\subset\Mman$ and $\Zman\subset\Nman$, a topological space $\Lman$, and a subset $\Xman\subset \Lman\times\Mman$ put
\begin{gather*}
	\CrmPairs{\rx}{\Mman}{\Yman}{\Nman}{\Zman} \, := \,
	\{ \func\in\Crm{\rx}{\Mman}{\Nman} \mid \func(\Yman) \subset \Zman\}, \\ %[0.5ex]
	%%%%%%%%%%%%%%%%%%
	\CrmPairs{0,\rx}{\Lman\times\Mman}{\Xman}{\Nman}{\Zman}  \, := \,
	\{ \Func\in\Crm{0,\rx}{\Lman\times\Mman}{\Nman} \mid \Func(\Xman) \subset \Zman \}.
\end{gather*}
If $\Lman$ is a $T_3$ space, then the homeomorphism~\eqref{equ:exp_law_C0r} of Lemma~\ref{lm:exp_law_C0r} induces a homeomorphism
\[
\EXP_{\Lman,\Mman,\Nman}: \,
\CrmLMdMNdN{0,\rx} \ \cong \ \Cont{\Lman}{\CrMdMNdN{\rx}}.
\]

Notice that if $\partial\Mman\not=\varnothing$, then the group $\Diff^{\rx}(\Mman)$ of diffeomorphisms of $\Mman$ is not open in $\Crm{\rx}{\Mman}{\Mman}$ though it is open in the subset $\CrMdMMdM{\rx}$ consisting of self maps of pairs $\func:(\Mman,\partial\Mman)\to(\Mman,\partial\Mman)$.
Therefore the first two statements of Theorem~\ref{th:B_CsMN__B_rel_param} are not applicable for the proof that for $0<\rx < \sx \leq\infty$ the inclusion $\Diff^{\sx}(\Mman) \subset \Diff^{\rx}(\Mman)$ is a weak homotopy equivalence, and it would be desirable to have such type of results for open subsets of $\CrMdMNdN{\rx}$.

We will briefly discuss how to extend the technique developed in the previous sections to the situation of maps sending boundary to boundary.

The following statement shows that Theorem~\ref{th:B_CsMN__B_rel_param_U} holds if we replace 
\begin{itemize}
\item $\Crm{0,\rx}{\Lman\times\Mman}{\Nman}$ with $\CrmLMdMNdN{0,\rx}$ and
\item $\Crm{\sx}{\Lman\times\Mman}{\Nman}$ with $\CrmLMdMNdN{\sx}$
\end{itemize}
We will just indicate the principal changes.

Let $0\leq\rx\leq\sx\leq\infty$, $\Lman$ be a smooth manifold possibly with corners, $\Xman\subset\Lman\times\Mman$ a closed subset, $\sBB \subset \CrmLMdMNdN{0,\rx}$ an open subset, and $\sAA = \sBB \,\cap\, \Crm{\sx}{\Lman\times\Mman}{\Nman}$.
For a map $\pfunc\in\CrmLMdMNdN{\sx}$ we also denote as before
\begin{align*}
	\BfX &= \{ \gfunc \in \sBB \mid \gfunc|_{\Xman} = \pfunc|_{\Xman} \}, &
	\AfX &= \BfX \,\cap\, \Crm{\sx}{\Lman\times\Mman}{\Nman}.
\end{align*}
\begin{theorem}\label{th:B_CsMN__B_rel_param_pairs}
\begin{enumerate}[leftmargin=*, label={\rm(\arabic*)}]
\item\label{enum:th:B_CsMN__B_rel_param_pairs:dense}
$\CrmLMdMNdN{\sx}$ is dense in $\CrmLMdMNdN{0,\rx}$ in the topology $\Wr{0,\rx}$.

\item\label{enum:th:B_CsMN__B_rel_param_pairs:B}
Let $\sFunc\in\sBB$ be a map which is $\Cr{\sx}$ on some neighbourhood of $\Xman$.
Then there exists a \fgood\ homotopy $\BXSiso:[0,1]\times\Lman\times\Mman \to \Nman$ of $\sFunc$ such that 
%$\BXSiso_1 \in \CrmLMdMNdN{\sx}$ and $\BXSiso\bigl([0,1]\times\Lman\times\partial\Mman\bigr) \subset \partial\Nman$, i.e. 
$\BXSiso_{\ptt}\in\sBB$ for all $\ptt\in[0,1]$, and $\BXSiso_1 \in \sAA$.

\item\label{enum:th:B_CsMN__B_rel_param_pairs:abs}
The inclusion $\incl: \sAA \subset \sBB$ is a weak homotopy equivalence.

\item\label{enum:th:B_CsMN__B_rel_param_pairs:rel}
Suppose there exists a pair of $\Cr{\sx}$ isotopies~\eqref{equ:stab_iso} which leafwise stabilizes $\pfunc$ near $\Xman$.
Then the inclusion $\incl: \AfX \subset \BfX$ is a weak homotopy equivalence as well.
\end{enumerate}
\end{theorem}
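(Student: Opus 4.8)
The plan is to repeat the proofs of Lemma~\ref{lm:approx_C0r} and Theorem~\ref{th:B_CsMN__B_rel_param_U} almost word for word, carrying along at every step the extra bookkeeping that all maps in sight send $\Lman\times\partial\Mman$ into $\partial\Nman$. Since statements~\ref{enum:th:B_CsMN__B_rel_param_pairs:dense}, \ref{enum:th:B_CsMN__B_rel_param_pairs:B}, \ref{enum:th:B_CsMN__B_rel_param_pairs:abs}, \ref{enum:th:B_CsMN__B_rel_param_pairs:rel} are precisely the boundary-sending analogues of Lemma~\ref{lm:approx_C0r}\ref{enum:lm:approx_C0r:dense}, Lemma~\ref{lm:approx_C0r}\ref{enum:lm:approx_C0r:appr}, and Theorem~\ref{th:B_CsMN__B_rel_param_U}\ref{enum:th:B_CsMN__B_rel_param_U:abs},\ref{enum:th:B_CsMN__B_rel_param_U:rel}, the only genuinely new work is to keep each approximation and each homotopy inside $\CrmLMdMNdN{0,\rx}$. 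For statement~\ref{enum:th:B_CsMN__B_rel_param_pairs:dense} I would rerun the convolution argument behind Lemma~\ref{lm:approx_C0r}\ref{enum:lm:approx_C0r:dense}, now invoking the final clause of Lemma~\ref{lm:approx_by_convol}: in charts adapted to the corner structure (so that $\partial\Mman$ and $\partial\Nman$ become the boundaries of the model quadrants $\cone{m}{}$, $\cone{n}{}$) and with product-form mollifiers, the $\Cinfty$ approximant of a $\Cr{0,\rx}$ map carrying $\partial\cone{m}{}$ into $\partial\cone{n}{}$ may again be chosen to carry $\partial\cone{m}{}$ into $\partial\cone{n}{}$; patching the local approximants by a partition of unity stays in the boundary-sending class.

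The heart of the matter is statement~\ref{enum:th:B_CsMN__B_rel_param_pairs:B}, the production of a \fgood\ homotopy inside the pairs space. Here I would make a \emph{boundary-adapted} choice of the embedding and tubular neighbourhood used in Lemma~\ref{lm:approx_C0r}\ref{enum:lm:approx_C0r:appr}: embed $\Nman$ into some $\bR^{\tau}$ via~\cite[Proposition~3.3.9]{MargalefOuterelo:DT:1992}, and fix a tubular neighbourhood $\Vman$ with retraction $\rho:\Vman\to\Nman$ (via~\cite[\S5, Th\'eor\`eme~1]{Douady:SHC:1961}) that \emph{respects the face stratification}, i.e. $\rho$ maps the part of $\Vman$ lying over $\partial\Nman$ back into $\partial\Nman$. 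With such a $\rho$ the gluing map~\eqref{equ:gluing_map_q}, namely $q(\ptt,a)=\rho\circ\bigl(\sFunc+\ptt(1-\lambda)(a-\sFunc)\bigr)$, preserves the boundary condition: on $\Lman\times\partial\Mman$ both $\sFunc$ and the chosen $\Cr{\sx}$ approximant take values in $\partial\Nman$, so (for the boundary-adapted embedding discussed below) the convex combination lands in the region on which $\rho$ returns values to $\partial\Nman$, whence each $\BXSiso_{\ptt}=q(\ptt,\omega)$ again sends $\Lman\times\partial\Mman$ into $\partial\Nman$. Properties~\ref{enum:good_homotopy:0}--\ref{enum:good_homotopy:3} of Definition~\ref{def:good_homotopy} are then inherited unchanged from Lemma~\ref{lm:approx_C0r}\ref{enum:lm:approx_C0r:appr}.

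Granting~\ref{enum:th:B_CsMN__B_rel_param_pairs:B}, statements~\ref{enum:th:B_CsMN__B_rel_param_pairs:abs} and~\ref{enum:th:B_CsMN__B_rel_param_pairs:rel} follow exactly as in Theorem~\ref{th:B_CsMN__B_rel_param_U}. First I would record the boundary-sending analogue of Corollary~\ref{cor:suff_cond_for_weh_AfX_BfX} (a map $\beta$ whose adjoint is $\Cr{\sx}$ near $\Disk{k+1}\times\Xman$ is homotopic as a pair map to one into $\AfX$), obtained by feeding the \fgood\ homotopy from~\ref{enum:th:B_CsMN__B_rel_param_pairs:B} into the exponential-law diagram and using that $\EXP$ of Lemma~\ref{lm:exp_law_C0r} restricts to the homeomorphism $\CrmLMdMNdN{0,\rx}\cong\Cont{\Lman}{\CrMdMNdN{\rx}}$ noted just before the theorem. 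Then I would rerun the implications \ref{enum:th:B_CsMN__B_rel_param_U:leaf_wise}$\Rightarrow$\ref{enum:th:B_CsMN__B_rel_param_U:make_Cs}$\Rightarrow$\ref{enum:th:B_CsMN__B_rel_param_U:AfX_BfX_whe}: the map $\gamma(\ptt)=\Niso_{\ptt}\circ\BFunc_{\pts}\circ\LMiso_{\ptt}$ is boundary-sending because $\LMiso_{\ptt}$, being a $\Cr{\sx}$ fibration morphism over a diffeomorphism of $\Lman$, preserves the face $\Lman\times\partial\Mman$, the isotopy $\Niso_{\ptt}$ preserves $\partial\Nman$ (index strata being invariant under $\Cr{1}$ diffeomorphisms of manifolds with corners), and $\BFunc_{\pts}$ is boundary-sending by hypothesis; continuity of $\gamma$ comes from Lemmas~\ref{lm:C0r_forward_composition} and~\ref{lm:C0r_backward_composition} as before, and Sublemma~\ref{lm:stab_near_X_prop} is untouched. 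The case $\Xman=\varnothing$ yields~\ref{enum:th:B_CsMN__B_rel_param_pairs:abs}.

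The main obstacle is exactly the boundary-adapted retraction used in the second paragraph. For a manifold with boundary it is clean: embed $\Nman$ neatly into a half-space $\{x_\tau\geq 0\}\subset\bR^\tau$ with $\partial\Nman\subset\{x_\tau=0\}$ and with collar direction orthogonal to that hyperplane, so that the normal fibres over $\partial\Nman$ lie in $\{x_\tau=0\}$; then $\partial\Nman$ sits inside a linear subspace, the convex combination of two of its points never leaves $\{x_\tau=0\}$, and the adapted $\rho$ returns it to $\partial\Nman$. For genuine corners $\partial\Nman$ cannot be placed in a single hyperplane, and one must instead build $\rho$ from a compatible system of collars of the faces so that it preserves each stratum and carries a full neighbourhood of $\partial\Nman$, taken within the appropriate stratum, back into $\partial\Nman$. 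Verifying that the straight-line interpolation composed with this stratified $\rho$ remains boundary-sending is the one delicate point, and is where the corner technology of~\cite{MargalefOuterelo:DT:1992, Douady:SHC:1961} is genuinely needed.
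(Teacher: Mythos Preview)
Your proposal is correct and follows essentially the same approach as the paper: the paper's sketch likewise reduces~\ref{enum:th:B_CsMN__B_rel_param_pairs:dense} to the boundary-preserving clause of Lemma~\ref{lm:approx_by_convol}, handles~\ref{enum:th:B_CsMN__B_rel_param_pairs:B} by choosing the embedding $\Nman\subset\bR^{\tau}$ and retraction $\rho:\Vman\to\Nman$ so that $\partial\Nman=\Nman\cap\bR^{\tau-1}$ and $\rho^{-1}(\partial\Nman)\subset\Vman\cap\bR^{\tau-1}$ (exactly your ``boundary-adapted'' retraction), and defers~\ref{enum:th:B_CsMN__B_rel_param_pairs:abs} and~\ref{enum:th:B_CsMN__B_rel_param_pairs:rel} to the arguments of Corollary~\ref{cor:suff_cond_for_weh_AfX_BfX} and Theorem~\ref{th:B_CsMN__B_rel_param_U}. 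Your final paragraph on the genuine-corners case is in fact more careful than the paper, which simply asserts that the required conditions on the embedding and $\rho$ ``can easily be satisfied.''
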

\begin{proof}[Sketch of proof]
\ref{enum:th:B_CsMN__B_rel_param_pairs:dense}
The proof follows the line of approximation theorems for maps of sending boundary to boundary by using Lemma~\ref{lm:approx_by_convol}.
%, \cite[Chapter 2, Lemma~3.1]{Hirsch:DiffTop}, and~\cite[Lemma~10.1.13]{MargalefOuterelo:DT:1992}.

\ref{enum:th:B_CsMN__B_rel_param_pairs:B}
This is a variant of the statement~\ref{enum:lm:approx_C0r:incl_cont} of Lemma~\ref{lm:approx_C0r}.
The proof is literally the same, however one only needs to choose an embedding $\Nman\subset\bR^{\tau}$, a tubular neighbourhood $\Vman$ of $\Nman$, and a retraction $\rho:\Vman\to\Nman$ so that $\partial\Nman = \Nman \cap \bR^{\tau-1}$ and $\rho^{-1}(\partial\Nman) \subset \Vman\cap\bR^{\tau-1}$.
Those conditions can easily be satisfied.
Then the formula~\eqref{equ:gluing_map_q} gives a well-defined map 
\[ 
q: \mathcal{Y}=\eta^{-1}\bigl( 
\CrmPairs{0,\rx}{\Lman\times\Mman}{\Lman\times\partial\Mman}{\Vman}{\Vman\cap\bR^{\tau-1}}
% \Cdrm{0,\rx}{\Lman\times\Mman}{\Vman} 
 \bigr) \to \CrmLMdMNdN{0,\rx}
\]
which implies existence of the desired homotopy $\BXSiso$.

The proof of statements~\ref{enum:th:B_CsMN__B_rel_param_pairs:abs} and~\ref{enum:th:B_CsMN__B_rel_param_pairs:rel} also literally repeats the arguments of Corollary~\ref{cor:suff_cond_for_weh_AfX_BfX} and Theorem~\ref{th:B_CsMN__B_rel_param_U}. 
We leave the details for the reader.
\end{proof}

In particular, statements~\ref{enum:th:B_CsMN__B_rel_param_pairs:abs} and~\ref{enum:th:B_CsMN__B_rel_param_pairs:rel} of Theorem~\ref{th:B_CsMN__B_rel_param_pairs} prove statement~\ref{enum:th:B_CsMN__B_rel_param:pairs} of Theorem~\ref{th:B_CsMN__B_rel_param}.

\subsection*{Acknowledgment}
The authors are sincerely indebted to Professor Helge Gl\"{o}ckner for informing us about papers~\cite{Glockner:0812.4713:2008, Wockel:AMB:2009, Glockner:1811.02888:2018} from which and from references there they (the authors) become aware about current state of the discussed problem.

% \bibliographystyle{pigc_plain}
% \bibliography{biblio}

\def\cprime{$'$}

\end{document}